\definecolor{vert}{RGB}{15,120,5}
\definecolor{gris}{RGB}{128,128,128}
\definecolor{bleu}{RGB}{0,50,150}
\definecolor{rouge}{RGB}{149,24,24}
\crefname{equation}{}{}
\newcommand{\titre}{}
\newcommand{\auteur}{}
\title{\titre}
\author{\auteur}
\numberwithin{equation}{subsubsection}
\theoremstyle{plain}
\newtheorem{thm}{Theorem}[subsection]
\newtheorem{prop}[thm]{Proposition}
\newtheorem*{thm*}{Theorem}
\newtheorem{nota}[thm]{Notation}
\newtheorem{lem}[thm]{Lemma}
\newtheorem{cor}[thm]{Corollary} 
\theoremstyle{definition}
\newtheorem{defi}[thm]{Definition}
\newtheorem{constr}[thm]{Construction}
\theoremstyle{remark}
\newtheorem{rem}[thm]{Remark}
\numberwithin{equation}{thm}
\newcommand{\C}{\mathbb{C}}
\newcommand{\Q}{\mathbb{Q}}
\newcommand{\Z}{\mathbb{Z}}
\newcommand{\N}{\mathbb{N}}
\newcommand{\Spec}{\operatorname{Spec}}
\newcommand{\A}{\mathbb{A}}
\newcommand{\HH}{\mathrm{H}}
\newcommand{\D}{\mathrm{D}}
\newcommand{\ccal}{\mathcal{C}}
\newcommand{\dcal}{\mathcal{D}}
\newcommand{\mcal}{\mathcal{M}}
\newcommand{\Dd}{\mathrm{D}}
\newcommand{\Ind}{\mathrm{Ind}}
\newcommand{\colim}{\mathrm{colim}}
\newcommand{\Hom}{\mathrm{Hom}}
\newcommand{\Gm}{{\mathbb{G}_m}}
\newcommand{\hcal}{\mathcal{H}}
\DeclareMathOperator{\sHom}{\mathscr{H}\text{\kern -3pt {\calligra\large om}}\,}
\newcommand{\Map}{\mathrm{Map}}
\newcommand{\HHp}{{\ ^{\mathrm{p}}\HH}}
\newcommand{\HHh}{{\ ^{\mathrm{std}}\HH}}
\newcommand{\Sch}{\mathrm{Sch}}
\newcommand{\op}{\mathrm{op}}
\newcommand{\catinfty}{\mathrm{Cat}_\infty}
\newcommand*{\Xfrak}{\mathfrak{X}}
\newcommand*{\Yfrak}{\mathfrak{Y}}
\newcommand*{\Zfrak}{\mathfrak{Z}}
\newcommand*{\Ufrak}{\mathfrak{U}}
\DeclareFontFamily{U}{BOONDOX-calo}{\skewchar\font=45 }
\DeclareFontShape{U}{BOONDOX-calo}{m}{n}{
  <-> s*[1.05] BOONDOX-r-calo}{}
\DeclareFontShape{U}{BOONDOX-calo}{b}{n}{
  <-> s*[1.05] BOONDOX-b-calo}{}
\DeclareMathAlphabet{\mathcalboondox}{U}{BOONDOX-calo}{m}{n}
\SetMathAlphabet{\mathcalboondox}{bold}{U}{BOONDOX-calo}{b}{n}
\DeclareMathAlphabet{\mathbcalboondox}{U}{BOONDOX-calo}{b}{n}
\newcommand{\Shv}{\operatorname{\mathrm{Sh}}}
\newcommand{\Nilp}{\operatorname{\mathrm{Nilp}}}
\newcommand{\DM}{\operatorname{\mathrm{DM}_\et}}
\newcommand{\et}{{\mathrm{\'et}}}
\begin{document}
\title{Mixed Hodge modules on stacks}
\author{Swann Tubach}
\date{}
\maketitle

\begin{abstract}
  Using the $\infty$-categorical enhancement of mixed Hodge modules constructed by the author in a previous paper, we explain how mixed Hodge modules canonically extend to algebraic stacks, together with all the $6$ operations and weights. We also prove that Drew's approach to motivic Hodge modules gives an $\infty$-category that embeds fully faithfully in mixed Hodge modules, and we identify the image as mixed Hodge modules of geometric origin.
\end{abstract}
 \tableofcontents

\subsection*{Introduction}
Let $X$ be a complex algebraic variety and $n\in \N$ be an integer. Deligne's work in \cite{MR0498552} gives a polarisable mixed Hodge structure on the singular cohomology $\HH^n_\mathrm{sing}(X(\C),\Q)$ of the complex points of $X$, seen as an analytic variety. M. Saito's category of algebraic mixed Hodge modules (\cite{MR1047415}) on $X$ is an Abelian category $\mathrm{MHM}(X)$ modelled on perverse sheaves which is a relative version of polarisable mixed Hodge structures. Its derived category $\Dd^b(\mathrm{MHM}(X))$ is endowed with the $6$ sheaf operations and any complex of mixed Hodge modules $K$ has an underlying complex $\mathrm{rat}(K)$ of perverse sheaves. The mixed Hodge structure on $\HH^n_\mathrm{sing}(X(\C),\Q)$ can be recovered as $\HH^n(f_*\Q_X)$ where $$f_*\colon \Dd^b(\mathrm{MHM}(X))\to\Dd^b(\mathrm{MHM}(\Spec(\C)))\simeq \Dd^b(\mathrm{MHS}_\C)$$ is the pushforward of the map $f:X\to\Spec(\C)$ and $\Q_X\in\Dd^b(\mathrm{MHM}(X))$ is the unit for the tensor product, whose underlying sheaf of perverse sheaves is the constant sheaf of $\Q$-vector spaces $\Q_X\in\Shv(X(\C),\Q)$ under Beilinson's (\cite{MR0923133}) isomorphism $\Dd^b(\mathrm{Perv}(X,\Q))\simeq \Dd^b_c(X(\C)^\mathrm{an},\Q)$. This point of view is very powerful as the formalism of the 6 operations is useful to make computations and reductions.

If $\Xfrak$ is a reasonable algebraic stack (say a global quotient stack or a stack exhausted étale locally by such), one can construct a mixed Hodge structure on $\HH^n_\mathrm{sing}(\Xfrak(\C),\Q)$ either by hyperdescent as in \cite[Section 6.1]{MR0498552} or by using exhaustions as in \cite[Section 5.2]{davisonPurity2CalabiYauCategories2024}. One purpose of this article (see \Cref{sectionStacks}) is to give an extension of M. Saito's derived category, together with the $6$ operations, to algebraic stacks so that the above mixed Hodge structures can be recovered as $\HH^n(f_*\Q_\Xfrak)$ with $f:\Xfrak\to\Spec(\C)$ the structural map:

\begin{thm*}[\Cref{opera}, \Cref{wstruct} and \Cref{nearbcycles}]
There exists a canonical extension of the derived category of mixed Hodge modules to algebraic stacks over the complex numbers. It has the $6$-operations, nearby cycles, and a notion of weights. Over stacks with affine stabilisers this notion of weights gives rise to a weight structure \emph{à la} Bondarko.
\end{thm*}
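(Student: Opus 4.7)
The strategy is to build on the $\infty$-categorical enhancement $X\mapsto \mathrm{MHM}(X)$ of Saito's theory already at hand for complex schemes and extend it to algebraic stacks by smooth hyperdescent. The first step is to verify that the functor $\D^b\mathrm{MHM}\colon \Sch_\C^\op\to\catinfty$, equipped with $*$-pullback, is a hypersheaf for the smooth topology. This should follow from the construction in the previous paper together with conservativity of the underlying realisations: $\mathrm{MHM}$ is built as a fibre product involving filtered $\D$-modules and rational perverse sheaves, each of which is known to satisfy smooth hyperdescent (for perverse sheaves via Beilinson's comparison and $\ell$-adic descent; for filtered $\D$-modules as a formal consequence of flat descent for quasi-coherent sheaves). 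Once hyperdescent is in place, I right Kan extend along the inclusion $\Sch_\C\hookrightarrow \mathrm{AlgSt}_\C$ to obtain the desired $\infty$-category on each algebraic stack.

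The six operations and nearby cycles are then installed using the now-standard $\infty$-categorical gluing formalism of Liu--Zheng and Khan. The pair $(f^*,f_*)$ is produced tautologically by the Kan extension, while $(f_!,f^!)$ are obtained by gluing the scheme-level exceptional operations along smooth atlases; the inputs required, namely base change, the projection formula, and smooth base change for nearby cycles, are all consequences of Saito's theory on schemes. Weights on a stack are defined by smooth pullback: a complex $K\in \D^b\mathrm{MHM}(\Xfrak)$ is of weight $\leq w$ (resp.\ $\geq w$) if and only if for some, equivalently any, smooth atlas $u\colon U\to\Xfrak$ the pullback $u^*K$ has weight $\leq w$ (resp.\ $\geq w$) on $U$, up to the usual shift by the relative dimension. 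Well-posedness relies on the fact that on schemes smooth pullback interacts with weights in a controlled way.

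The main obstacle, and the content of the last clause of the theorem, is the upgrade of this notion of weights to a genuine Bondarko weight structure under the affine-stabiliser hypothesis. The delicate axiom is the existence of weight truncation triangles: on a scheme they are provided directly by Saito's theory, but on a stack they must be produced by descent, and the naive descent diverges for arbitrary stabilisers. My plan is to stratify $\Xfrak$ into locally closed substacks of the form $[Y/G]$ with $G$ affine, and then use the Totaro / Edidin--Graham approximation of $BG$ by the quotients $(V_n\setminus Z_n)/G$ coming from a sequence of linear representations of increasing connectivity: the scheme-level weight truncation triangles on $[Y\times(V_n\setminus Z_n)/G]$ are compatible with the approximation $*$-pullback maps and stabilise past a bound depending on the cohomological amplitude of $K$, which produces an honest triangle on $[Y/G]$. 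Orthogonality and stability under retracts are inherited from the scheme case by smooth descent, and gluing along the stratification yields the weight structure on $\Xfrak$.
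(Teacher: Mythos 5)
There are two genuine gaps, both in the parts you singled out as the content of the theorem. First, the weight structure: you identify the weight truncation triangles as the delicate axiom, but those actually extend to all Artin stacks by descent (shifted smooth pullbacks are weight-exact, so the Morel-style weight truncations glue with no hypothesis on stabilisers). The axiom that fails in general, and the only place where affine stabilisers enter, is negativity: $\Hom_{\Dd_H^b(\Xfrak)}(K,L[1])=0$ for $K,L$ pure of weight $0$. Your claim that ``orthogonality \dots is inherited from the scheme case by smooth descent'' is false: $\Hom$ on a stack is a limit over the \v{C}ech nerve of an atlas and picks up higher cohomology of the stabilisers, which is exactly why the statement fails for $\mathrm{B}E$ with $E$ an elliptic curve (Drinfeld's example, as in Sun's work) even though it holds on the atlas. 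The paper instead deduces negativity from a weight bound: for $f$ a morphism from a stack with affine stabilisers, $f_!$ (resp.\ $f_*$) preserves weights $\leqslant w$ (resp.\ $\geqslant w$); applying this to $f_*\sHom(K,L)$ gives the vanishing, and Bondarko's generation theorem then produces the weight structure. That weight bound is the real work: dévissage to a lisse pure object, passage to the rigidification along the inertia stack to reduce to fibres $\mathrm{B}G$ with $G$ affine, then reduction to a maximal torus by the splitting principle and to $\mathrm{B}\Gm$ by Künneth, where purity is checked via the $\mathbb{P}^N$ approximations. Your Totaro/Edidin--Graham approximation idea is the right tool for this last computation, but as written it is aimed at producing truncation triangles rather than at the $\Hom$-vanishing, so the essential step is missing.

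Second, nearby cycles cannot be ``installed by the gluing formalism'' from Saito's functor: to descend $\Psi_f$ to stacks you need it as a homotopy-coherent natural transformation commuting with smooth pullbacks (and proper pushforwards), and Saito's triangulated-level construction provides no such coherence data. The paper gets this by realising the unipotent nearby cycles motivically, following Cass--van den Hove--Scholbach: $\Dd_{HT}(\Gm)\simeq\mathrm{Mod}_{p_*\Q}(\Dd_{HT}(\C))\simeq\Nilp\,\Dd_{HT}(\C)$, so $\Upsilon_f$ is defined as $i^*j_*$ on modules and upgrades to a lax monoidal transformation on a category of correspondences over $\A^1_\C$; the total $\Psi_f$ is then the colimit over the ramified covers $t\mapsto t^n$ (Ayoub), and only afterwards is it compared with Saito's functor on schemes to get t-exactness, constructibility and duality. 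Without some such construction your nearby-cycles clause is unsupported. Finally, a smaller point: smooth (indeed $h$-) hyperdescent of $\Dd^b(\mathrm{MHM}(-))$ does not follow from descent of filtered $\Dd$-modules and perverse sheaves, since mixed Hodge modules form a full subcategory of that fibre product cut out by non-formal conditions; the paper obtains it from the fact that the enhanced theory is a $\Q$-linear motivic category satisfying étale descent, localisation and proper base change, via Cisinski--Déglise.
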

The proof relies on the $\infty$-categorical enhancement of mixed Hodge modules obtained in \cite{SwannRealisation}, and then on Liu and Zheng's work (\cite{liuEnhancedSixOperations2017}) on extension of formalisms of $6$ operations to stacks. The construction of nearby cycles is based on a motivic construction of the unipotent nearby cycles functor considered in \cite{cassCentralMotivesParahoric2024b}, who give a natural setting in which Ayoub's construction of nearby cycles (\cite{MR2438151}) works. We hope that this paper will be useful as a toolbox for studying the Hodge cohomology of stacks.  It has been used by T. Kinjo in \cite{kinjoDecompositionTheoremGood2024} to prove purity statements and decomposition theorems for the homology of stacks having a good moduli space. \\

There is another approach to giving a relative version of mixed Hodge structures. In \cite{drewMotivicHodgeModules2018}, Drew constructs an $\infty$-category of \emph{motivic Hodge modules} $\mathrm{DH}(X)$ which is endowed with the $6$ operations as well as with a notion of weights. If $X=\Spec\C$ this category embeds fully faithfully in $\Dd(\Ind\mathrm{MHM}^p_\C)$ the derived category of the indization of mixed Hodge modules. This construction has the advantage of being quite straightforward: one consider the commutative algebra $\hcal$ in the $\infty$-category of Voevodsky étale motives that represents Hodge cohomology, and then one takes modules over this algebra. In comparison, M. Saito's construction is very delicate and requires a lot of attention in order to work. The major drawback of Drew's construction is that there was no easy construction of a t-structure, hence one looses access to an abelian category. The second purpose of this article is to prove that Drew's category gives the right thing: it is endowed with a t-structure and it embeds fully faithfully in the derived category of mixed Hodge modules. We also identify its image.

\begin{thm*}[\Cref{drewcomp}]
  Let $X$ be a finite type scheme over the complex numbers. The $\infty$-category of motivic Hodge modules on $X$ of Drew embeds in the derived category of ind-mixed Hodge modules on $X$. Its image is the category generated under shifts, and colimits by objects of the form $f_*\Q_Y$ with $f:Y\to X$ a proper morphism.
\end{thm*}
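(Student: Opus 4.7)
The plan is to build a colimit-preserving functor from Drew's category to $\Dd(\mathrm{Ind}\,\mathrm{MHM}(X))$, verify full faithfulness on compact generators, and then identify the essential image.

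The $\infty$-categorical enhancement of \cite{SwannRealisation} provides a symmetric monoidal colimit-preserving realisation $\rho_X\colon\DM(X)\to \Dd(\mathrm{Ind}\,\mathrm{MHM}(X))$ compatible with the six operations. Its lax monoidal right adjoint $\rho_X^R$ sends the unit $\Q_X$ to an $E_\infty$-algebra in $\DM(X)$ that represents absolute Hodge cohomology; by the defining property of $\hcal$ in \cite{drewMotivicHodgeModules2018}, this identifies $\rho_X^R(\Q_X)$ with Drew's $\hcal$ in $\mathrm{CAlg}(\DM(X))$. Through this identification the counit $\rho_X(\hcal)\to\Q_X$ promotes $\Q_X$ to an $\hcal$-algebra in $\Dd(\mathrm{Ind}\,\mathrm{MHM}(X))$, and base change yields the colimit-preserving functor
$$\tilde\rho_X\colon\mathrm{DH}(X)=\mathrm{Mod}_\hcal(\DM(X))\longrightarrow\Dd(\mathrm{Ind}\,\mathrm{MHM}(X)),\qquad M\longmapsto \rho_X(M)\otimes_{\rho_X(\hcal)}\Q_X.$$

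To prove $\tilde\rho_X$ is fully faithful, since both sides are presentable stable and $\tilde\rho_X$ preserves colimits, it is enough to check the statement on $\Hom$-complexes between compact generators. Resolution of singularities over $\C$ provides compact generators of $\DM(X)$ of the form $f_*\Q_Y(n)[m]$ with $f\colon Y\to X$ proper and $Y$ smooth, hence compact generators $f_*\hcal_Y(n)[m]$ of $\mathrm{DH}(X)$. For two such generators, proper base change and adjunction reduce the $\Hom$-complex on both sides to the absolute Hodge cohomology of the smooth fibre product $Y\times_X Z$: on the motivic side as $\hcal$-cohomology by construction of $\hcal$, on the Hodge module side as $\Ext$-groups from the unit to Tate twists following Beilinson's description. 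The two presentations match because $\rho_X$ is a map of six-functor formalisms together with the identification $\rho_X^R(\Q_X)\simeq \hcal$. This last identification, matching two intrinsically different presentations of absolute Hodge cohomology at the coherent level of $E_\infty$-algebras, is the main anticipated obstacle and is precisely where the $\infty$-categorical framework of \cite{SwannRealisation} is essential.

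For the essential image: $\tilde\rho_X$ being fully faithful and colimit-preserving, its image is the localising subcategory of $\Dd(\mathrm{Ind}\,\mathrm{MHM}(X))$ generated by the realisations of the compact generators, namely $f_*\Q_Y(n)[m]$ for proper $f$ with $Y$ smooth. One removes the smoothness of $Y$ via proper hypercoverings: any proper $g\colon Z\to X$ is the geometric realisation of a simplicial proper map with smooth terms coming from iterated resolutions of singularities, hence $g_*\Q_Z$ lies in the colimit closure of pushforwards from smooth proper sources. Tate twists are absorbed into the characterisation, negative twists appearing as direct summands of $(\mathbb{P}^n\times X\to X)_*\Q$ by the decomposition theorem, and analogous manipulations in the indization recovering the remaining integer Tate twists from proper pushforwards, in line with the standard notion of mixed Hodge modules of geometric origin.
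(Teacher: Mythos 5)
Your overall skeleton — factor the realisation through $\mathrm{Mod}_{\hcal}(\DM(X))$, check full faithfulness on compact objects, and identify the image with the subcategory generated under colimits and shifts by the images of compact generators — is the same as the paper's. But the full faithfulness step has a genuine gap: you take the identification $\rho^{H,X}_*\Q_X\simeq\hcal_{X/\C}=\pi_X^*\hcal_\C$ as "the defining property of $\hcal$". It is not definitional: Drew's algebra over $X$ is by construction the \emph{pullback} of the algebra over the point, and its agreement with $\rho^{H,X}_*\Q_X$ is precisely the second assertion of \Cref{ff}, proved together with full faithfulness (indeed, full faithfulness on the generators $\hcal_{X/\C}\otimes M$ amounts to the map $\hcal_{X/\C}\otimes N\to\rho^{H,X}_*\rho_{H,X}N$ being an equivalence for compact $N$, and $N=\Q_X$ gives the algebra statement), so your argument is circular at the key point. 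The paper's route is different and avoids any computation: full faithfulness over $\Spec\C$ by rigidity of the categories involved, then reduction of a general base to the point using compact objects and compatibility of the realisation with internal Hom (and properness to push to the point). Your concrete computation is also not correct as stated: $Y\times_XZ$ is not smooth when $Y,Z$ are, and matching the two sides abstractly as "absolute Hodge cohomology" is not enough — one has to show that the specific map induced by the realisation is an equivalence, which is again the content of the unit-map statement above.

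On the essential image there are two further problems. First, cohomological descent along a proper hypercover expresses $g_*\Q_Z$ as a \emph{totalization} (a limit) of smooth proper pushforwards, and a subcategory closed under colimits and shifts has no reason to contain such a limit; the step "hence $g_*\Q_Z$ lies in the colimit closure" does not follow. The fix is simpler and is what makes the statement work: $\underline{\rho_H}$ commutes with proper pushforward, so $g_*\Q_Z=\underline{\rho_H}(g_*\hcal_Z)$ lies in the image directly, with no hypercovers. Second, the Tate twists cannot be waved away: already over the point every $f_*\Q_Y$ with $f$ proper has effective cohomology, and effectivity is stable under shifts, colimits, extensions and retracts, so $\Q(1)$ is \emph{not} in the subcategory generated by untwisted proper pushforwards; the generators must include twists $(m)$, exactly as in the paper's definition of $\mathrm{MHM}_\mathrm{geo}$ and $\Dd_{H,\mathrm{geo}}$. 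Finally, be aware that the paper's \Cref{drewcomp} proves a stronger identification of the image — the subcategory $\Dd_{H,\mathrm{geo}}(X)$ cut out by the ordinary t-structure, whose heart is closed under subquotients — and that is where the substantial input sits: continuity at generic points, spreading out, semi-simplicity of pure Hodge modules under smooth projective pushforward (so that subquotients are direct factors, \Cref{generices}), and a Noetherian induction with localisation triangles. Your colimit-closure description, even once repaired as above, does not give that statement, nor the t-structure on $\mathrm{DH}(X)$ which is the main payoff of the theorem.
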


There is also an improved version of this theorem with enriched motivic Hodge modules, that reach the $\infty$-category generated under shifts and colimits by objects of the form $f_*g^*H$ with $f:Y\to X$ proper, $g:Y\to\Spec\C$ the structural morphism and $H\in\mathrm{MHS}_\C$ a graded polarisable rational mixed Hodge structure over $\C$. We refer the reader to \Cref{sectionMHMmotivic} for more details.
To prove this result, we use that by the work of Drew and Gallauer (\cite{MR4560376}) the $\infty$-categorical enhancement of mixed Hodge modules provides a realisation functor  $$\rho_{\mathrm{H}}\colon\DM\to\Ind\Dd^b(\mathrm{MHM}(-))$$ from the presentable $\infty$-category of rational étale Voevodsky motives to the indization of the derived category of mixed Hodge modules, that commutes with the operations and is colimit preserving. By abstract nonsense this functor will factor through Drew's category of mixed Hodge modules, and gives a fully faithful functor $\mathrm{DH}\to\Ind\Dd^b(\mathrm{MHM})$. The identification of the image is inspired by the proof of Ayoub in the Betti case (\cite[Theorem 1.98]{ayoubAnabelianPresentationMotivic2022}), and relies on the semi-simplicity of smooth and proper pushforwards of pure mixed Hodge modules.

\subsubsection*{Organisation of the paper}
In the first \Cref{recollections} of this article we recall how to construct the $\infty$-categorical enhancement of the $6$ operations for mixed Hodge modules, and why this gives a Hodge realisation of étale motives. In the second \Cref{sectionMHMmotivic} we show that Drew's construction embeds fully faithfully in mixed Hodge modules. In the last \Cref{sectionStacks} we explain how to extend mixed Hodge modules on stacks, finishing with a comparison to existing constructions.

\subsection{Recollections on $\infty$-categorical enhancements of mixed Hodge modules}
\label{recollections}
\subsubsection{Construction of the enhancement}
In \cite{SwannRealisation}, we proved that Saito's construction of the triangulated bounded derived category of algebraic mixed Hodge modules, together with the $6$ operations, can be enhanced to the world of $\infty$-categories. Let us recall how this works:

For every separated finite type $\C$-scheme $X$, the bounded derived category $\Dd^b(\mathrm{MHM}(X))$ of mixed Hodge modules carries a natural \emph{standard} t-structure (called the \emph{constructible} t-structure in \emph{loc. cit.} but we prefer the name standard because all complexes on $\Dd^b(\mathrm{MHM}(X))$ are constructible) which is characterised by the fact that 
$$\Dd^b(\mathrm{MHM}(X))^{t_\mathrm{std}\in [a,b]} = \{K\in\Dd^b(\mathrm{MHM}(X))\mid \forall x\in X\text{ closed}, \ x^*K\in \Dd^{[a,b]}(\mathrm{MHS}_\C)\}$$
for every $-\infty\leqslant a\leqslant b\leqslant +\infty$. If we endow $\Dd_c^b(X(\C)^\mathrm{an},\Q)$ with its canonical t-structure induced by the inclusion $$\Dd_c^b(X(\C)^\mathrm{an},\Q)\subset \Dd(\Shv(X(\C)^\mathrm{an},\Q))$$ then the "underlying $\Q$-structure functor" 
$$\mathrm{rat}\colon\Dd^b(\mathrm{MHM}(X))\to\Dd^b_c(X(\C)^\mathrm{an},\Q)$$ is t-exact if the left-hand side is endowed with the standard t-structure. Denote by $\mathrm{MHM}_\mathrm{std}(X)$ the heart of the standard t-structure. The crucial result in \cite{SwannRealisation} is the following, whose proof is adapted from Nori's proof (\cite{MR1940678}) of the analogous result for $\Dd^b_c(X(\C)^\mathrm{an},\Q)$:
\begin{thm}[{\cite[Corollary 2.19]{SwannRealisation}}]
  The canonical functor 
  $$\Dd^b(\mathrm{MHM}_\mathrm{std}(X))\to\Dd^b(\mathrm{MHM}(X))$$ is an equivalence.
\end{thm}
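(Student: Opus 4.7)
My plan is to apply Beilinson's realization criterion in the form used by Nori: given a triangulated category $\mathcal{T}$ with a bounded t-structure of heart $\mathcal{A}$, the canonical functor $\Dd^b(\mathcal{A})\to\mathcal{T}$ is an equivalence if and only if the natural maps $\Ext^i_{\mathcal{A}}(A,B)\to\Hom_{\mathcal{T}}(A,B[i])$ are bijective for all $A,B\in\mathcal{A}$ and all $i\geqslant 0$. The ordinary t-structure on $\Dd^b(\mathrm{MHM}(X))$ is bounded because $X$ is of finite type and ordinary amplitude is tested pointwise, so every complex has vanishing ordinary cohomology sheaves outside a finite range of degrees. The entire content is therefore the $\Ext$-comparison.

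I would reformulate this comparison as an \emph{effaceability} statement: for each $i\geqslant 1$, every class $\alpha\in\Hom_{\Dd^b(\mathrm{MHM}(X))}(A,B[i])$ with $A,B\in\mathrm{MHM}_\mathrm{ord}(X)$ becomes zero after composing with some monomorphism $B\hookrightarrow B'$ in $\mathrm{MHM}_\mathrm{ord}(X)$. Then I would run a noetherian induction on $\dim\mathrm{supp}(B)$ using the recollement $j\colon U\hookrightarrow X\hookleftarrow Z\colon i$ for a smooth dense open $U$: both $j^*,j_*,j_!$ and $i^*,i_*,i^!$ are t-exact for the ordinary t-structure (since it is pointwise, this amounts to the compatibility of the $6$ operations with the rational realisation $\mathrm{rat}$ and the corresponding known fact for constructible sheaves). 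This lets me reduce to the case where $X$ is smooth and affine and $B$ is a smooth mixed Hodge module, i.e.\ an admissible graded-polarisable variation of mixed Hodge structure.

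The core step is the Hodge analog of Nori's basic lemma: for $X$ an affine complex variety of dimension $d$, a closed subvariety $Y_0\subsetneq X$, and a mixed Hodge module $M\in\mathrm{MHM}_\mathrm{ord}(X)$, there exists $Y_0\subset Y\subsetneq X$ of dimension $<d$ such that the relative hypercohomology $\Gamma_{\mathrm{MHM}}(X,Y;M)\in\Dd^b(\mathrm{MHS}^p_\C)$ is concentrated in degree $d$. The only geometric input beyond Nori's formal combinatorics is Artin vanishing in $\mathrm{MHM}$, namely that $f_*$ is left-$t$-exact and $f_!$ is right-$t$-exact for the \emph{perverse} t-structure whenever $f$ is affine; this is part of Saito's theory and is compatible with $\mathrm{rat}$. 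Iterating the basic lemma produces a cellular filtration of $X$ by affine opens, which yields, for any $A$ and $B$ in $\mathrm{MHM}_\mathrm{ord}(X)$, an explicit Yoneda resolution of length $\leqslant d$ realising every class $\alpha\in\Hom_{\Dd^b(\mathrm{MHM}(X))}(A,B[i])$ as a genuine extension in $\mathrm{MHM}_\mathrm{ord}(X)$, thereby establishing effaceability.

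The main obstacle is the basic lemma itself: one must argue that the ``cellular'' embedding constructed for constructible sheaves by perturbing the stratification can be lifted to mixed Hodge modules. Once Artin vanishing in the perverse t-structure on $\Dd^b(\mathrm{MHM})$ is available and the recollement/six-operation formalism on $\Dd^b(\mathrm{MHM})$ is known to be compatible with the one on $\Dd^b_c(X(\C)^{\mathrm{an}},\Q)$ via $\mathrm{rat}$, Nori's inductive procedure transports directly: the existence of the subvariety $Y$ is read off from the vanishing of perverse cohomology sheaves on a sufficiently generic hyperplane section, and the resulting Hodge-theoretic object enjoys the same vanishing as its underlying constructible sheaf.
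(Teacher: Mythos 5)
Your proposal follows essentially the same route as the paper, which simply cites \cite[Corollary 2.19]{SwannRealisation} and notes that the proof there is adapted from Nori's argument for $\Dd^b_c(X(\C)^{\mathrm{an}},\Q)$: Beilinson's $\Ext$-comparison/effaceability criterion for the bounded ordinary t-structure, reduction via recollement and noetherian induction, and a Hodge-theoretic basic lemma whose geometric input is Artin vanishing for the perverse t-structure in Saito's theory together with compatibility of the operations with $\mathrm{rat}$. One small correction: $j_*$ and $i^!$ are only \emph{left} t-exact for the ordinary t-structure (already for constructible sheaves $R^1j_*$ need not vanish), but the reduction only uses the genuinely t-exact functors $j^*$, $j_!$, $i^*$, $i_*$, so this does not affect the argument.
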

Using this, because pullbacks $f^*$ are t-exact for the standard t-structure, it is not hard to see that they are derived functors, hence that they canonically have a $\infty$-categorical enhancement which is functorial in $f$, and the same can be said for the tensor product. The enhancements of the other operations arises by adjunction: if a $\infty$-functor between stable $\infty$-categories has an adjoint on the homotopy triangulated category, it has an $\infty$-categorical adjoint by \cite[Theorem 3.3.1]{MR4093970}.

In particular, we have a functor 
$$\Dd^b(\mathrm{MHM}(-)):\Sch_\C^\op\to\mathrm{CAlg}(\mathrm{St})$$ taking values in the $\infty$-category of stably symmetric monoidal $\infty$-categories and symmetric monoidal exact functors. It sends a map $f:Y\to X$ of finite type $\C$-schemes (say, separated) to an $\infty$-functor $f^*$ lifting Saito's pullback functor on the homotopy category. For smooth $f$, the functor $f^*$ admits a left adjoint $f_\sharp$, and moreover $\Dd^b(\mathrm{MHM}(-))$ satisfies the usual axioms considered in 
motivic context: $\A^1$-invariance, $\mathbb{P}^1$-stability, smooth base change, proper base change \emph{etc}. In particular, the underlying homotopy functor is a \emph{motivic triangulated category} in the sense of \cite[Definition 2.4.45]{MR3971240}, it satisfies moreover $h$-hyperdescent. We now consider the functor 
$$\Dd_{\mathrm{H}}(-):=\Ind\Dd^b(\mathrm{MHM}(-)):\Sch_\C^\op\to\mathrm{CAlg}(\mathrm{Pr}^L_\mathrm{St})$$ taking values in \emph{presentable} $\infty$-categories. By \cite[Appendix A]{SwannRealisation}, we see that both $\D_\mathrm{H}$ and $\D^b(\mathrm{MHM}(-))$ extend to non necessarily separated schemes using Zariski descent, together with all the operations.

\begin{rem}
  The above functor extends tautologically to a functor from diagram of schemes to diagram of symmetric monoidal stable $\infty$-categories, hence we can evaluate $\Dd_\mathrm{H}$ on a simplicial scheme to obtain a cosimplicial diagram of $\infty$-categories. Using the fact that it is easy (see \cite[Remark before 4.6]{MR1047415}) to compare the mixed Hodge structures on $\HH^n_{\mathrm{sing}}(X(\C),\Q)$ constructed by Deligne and Saito (under the equivalence $\mathrm{MHM}(\Spec(\C))\simeq\mathrm{MHS}^p_\C$) when $X$ is a closed subset of a smooth variety, $h$-hyperdescent of the derived category of mixed Hodge modules gives (using a resolution of singularities, that provides a $h$-hypercovering made out of smooth varieties) a simple proof that in fact the two mixed Hodge structures are the same for a general complex variety $X$. This result was known, but the proof is quite involved (see \cite{MR1741272}).
\end{rem}
\begin{rem}
  Everything we do in this article would probably hold more generally for arithmetic mixed Hodge modules over varieties defined over a subfield $k$ of $\C$. They are considered in \cite[Examples 1.8 (ii)]{saitoFormalismeMixedSheaves2006}, and our work in \cite{SwannRealisation}, thus the proofs of this article, would probably work \emph{verbatim} in this slightly more general context.
\end{rem}
\subsubsection{Hodge realisation of étale motives}
The main motivation for the $\infty$-categorical lifting of mixed Hodge modules was that this was the only obstruction for the existence of a realisation functor from Voevodsky motives that commutes with all the operations. We will deal here with the étale version, with rational coefficients.
Recall that the $\infty$-category of étale motives with rational coefficients over a scheme $X$ is defined (see \cite{MR3281141}) as a formula by 
$$\DM(X):= \Shv_{\et,\A^1}^\wedge(\mathrm{Sm}_X,\mathrm{Mod}_\Q)[\Q(1)^{\otimes -1}].$$ This means that to construct $\DM(X)$, one consider étale hypersheaves on smooth $X$-schemes with values in $\mathrm{Mod}_\Q\simeq\Dd(\Q)$ that are $\A^1$-invariant, and then one invert the object $\Q(1) = (M(\mathbb{P}^1_X)/M(\infty_X))[-2]$ for the tensor product, with $M$ the Yoneda embedding. The formula gives an universal property of the presentable symmetric monoidal $\infty$-category $\DM(X)$, as proven by Robalo in \cite{MR3281141}: any symmetric monoidal functor $F$ on smooth $X$-schemes with values in a rational presentably symmetric monoidal $\infty$-category $\Dd$ that satisfies étale hyperdescent, $\A^1$-invariance, and such that $F(\mathbb{P}^1_X)/F(\infty_X)$ is a tensor invertible object factors uniquely through the functor $M\colon \mathrm{Sm}_X\to\DM(X)$. By the work of Drew and Gallauer \cite{MR4560376} in fact this universality of $\DM(X)$ induces a universal property of the functor $\DM$ on schemes of finite type over some base $S$. Together with \cite[Theorem 4.4.25]{MR3971240}, taking $S = \Spec(\C)$ one obtains:

\begin{thm}[{\cite[Theorem 4.4]{SwannRealisation}}]
  There exists a Hodge realisation 
  $$\rho_{\mathrm{H}}\colon\DM\to\Dd_\mathrm{H}$$ on finite type $\C$-schemes that commutes with the $6$ operations. Moreover, the composition with the functor 
  $$\mathrm{rat}\colon\Dd_{\mathrm{H}}\to\Ind\Dd^b_c(-,\Q)$$ gives the Betti realisation.
\end{thm}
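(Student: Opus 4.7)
The plan is to produce $\rho_H$ directly from the universal property of $\DM$. By the results of Drew and Gallauer \cite{MR4560376}, $\DM\colon\Sch_S^\op\to\mathrm{CAlg}(\mathrm{Pr}^L_\mathrm{St})$ is characterised (over a reasonable base $S$) as the initial object among presentable symmetric monoidal $\infty$-categorical functors satisfying étale hyperdescent, $\A^1$-invariance and Tate twist invertibility. Thus it suffices to verify that the functor $\Dd_H(-)=\Ind\Dd^b(\mathrm{MHM}(-))$ satisfies these axioms and then invoke this universal property to obtain a colimit-preserving symmetric monoidal natural transformation $\rho_H\colon\DM\to\Dd_H$.

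The verification of the axioms has essentially been accomplished by the material recalled above. Each $\Dd_H(X)$ is presentable and symmetric monoidal by construction (ind-completion of a small stable symmetric monoidal $\infty$-category); the Tate object $\Q^H(1)\in\Dd^b(\mathrm{MHM}(X))$ is $\otimes$-invertible; $\A^1$-invariance and smooth base change hold because Saito's constructions satisfy them; and étale (in fact $h$-) hyperdescent follows from the combination of $\Q$-linearity, étale descent, localisation and proper base change via \cite[Theorem 3.3.37]{MR3971240}. Feeding these inputs into the Drew--Gallauer theorem produces $\rho_H$ as a morphism of presentable motivic $\infty$-categories, and then \cite[Theorem 4.4.25]{MR3971240} upgrades this to the statement that $\rho_H$ commutes with all six operations, since both sides satisfy localisation and all the standard motivic axioms.

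For the Betti comparison, observe that $\mathrm{rat}\colon\Dd_H\to\Ind\Dd^b_c(-,\Q)$ is itself a symmetric monoidal, colimit-preserving natural transformation of presentable motivic $\infty$-categories (it commutes with $f^*$ and with tensor products by construction). Consequently the composition $\mathrm{rat}\circ\rho_H\colon\DM\to\Ind\Dd^b_c(-,\Q)$ satisfies the hypotheses of the universal property applied to $\DM$, and therefore coincides with the functor singled out by it, namely the Betti realisation. The same reasoning is what \emph{defines} the Betti realisation in this framework, so the final identification is essentially tautological.

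The honest difficulty sits entirely upstream: one needs the $\infty$-categorical enhancement of $\Dd^b(\mathrm{MHM})$, together with the six operations and base change isomorphisms at the $\infty$-categorical (not merely triangulated) level. This is exactly the content of the earlier parts of \cite{SwannRealisation} whose input was the equivalence $\Dd^b(\mathrm{MHM}_\mathrm{ord}(X))\simeq\Dd^b(\mathrm{MHM}(X))$ stated above. Once that enhancement, together with $h$-hyperdescent, is in hand, the construction of $\rho_H$ and verification of its properties are a formal consequence of the Drew--Gallauer and Cisinski--Déglise universality theorems.
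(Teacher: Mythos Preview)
Your proposal is correct and follows essentially the same route as the paper: the paper does not give a detailed proof either, but simply cites \cite[Theorem 4.4]{SwannRealisation} after noting that the Drew--Gallauer universal property of $\DM$ combined with \cite[Theorem 4.4.25]{MR3971240} yields the result once the $\infty$-categorical enhancement of $\Dd^b(\mathrm{MHM})$ is available. Your identification of the Betti comparison via uniqueness in the universal property is the expected argument.
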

We will use this functor in the next section to compare mixed Hodge modules with Drew's approach, and at the end of this article to obtain the computation of the cohomology of a quotient stack with exhaustions in a easy way.

\subsection{Mixed Hodge modules of geometric origin}
\label{sectionMHMmotivic}
 In this section, we prove that Drew's approach in \cite{drewMotivicHodgeModules2018} to motivic Hodge modules gives a full subcategory of the derived category of mixed Hodge modules, stable under truncation, and thus has a t-structure.
\subsubsection{Motivic Hodge modules}
 As recalled above, we have a Hodge realisation 
 $$\rho_{\mathrm{H}}\colon\DM\to \Dd_{\mathrm{H}}$$ compatible with all the operations. This functor is $\mathrm{Mod}_\Q$-linear. The target $\Dd_{\mathrm{H}}$ is naturally valued in $\Dd_{\mathrm{H}}(\C)=\Ind\D^b(\mathrm{MHS}^p_\C)$-linear presentable $\infty$-categories, where $\mathrm{MHS}^p_\C$ is the abelian category of polarisable mixed Hodge structures over $\C$ (with rational coefficients). Thus the above realisation has a natural enrichment 
 $$\mathbf{\rho_{\mathrm{H}}}\colon \mathbf{{DM}}\to\Dd_{\mathrm{H}}$$ 
 where $\mathbf{DM}:=\DM\otimes_{\mathrm{Mod}_\Q}\Dd_{\mathrm{H}}(\C)$ is the $\Dd_{\mathrm{H}}(\C)$-linearisation of $\DM$. It can be computed as $$\mathbf{DM}(X) = \DM(X)\otimes_{\mathrm{Mod}_\Q} \Dd_{\mathrm{H}}(\C),$$ but can also be put inside the definition:
 $$\mathbf{DM}(X)\simeq \mathrm{Shv}_{\A^1,\et}^\wedge(\mathrm{Sm}_X,\Dd_{\mathrm{H}}(\C))[\Q(1)^{\otimes -1}]$$ (this follows from \cite[Corollary 2.24]{volpe2023operationstopology} which proves that tensoring can go inside sheaves, but also works for localisations such as $\A^1$-localisation. The commutation of tensoring with $\D_\mathrm{H}(\C)$ with inverting the Tate twist follows from the expression of the $\Q(1)$-tensor inverted category as a colimit (see \cite[1.4.12, 1.3.13, 1.6.3]{annalaMotivicSpectraUniversality2023}), and the preservation of colimits by the functor $-\otimes\Dd_\mathrm{H}(\C)$). Thus the $\infty$-category $\mathbf{DM}$ is the $\mathbb{P}^1$-stabilisation of the $\A^1$-invariant étale hypersheaves on $\mathrm{Sm}_X$ with values in $\Dd_{\mathrm{H}}(\C)$. In particular, it also affords the $6$ operations and the canonical functor $\DM\to\mathbf{DM}$ commutes with them.
For each finite type $\C$-scheme $X$, the functors 
$${\rho_{H,X}}\colon \DM(X)\to\Dd_{\mathrm{H}}(X)$$ 
and 
$${\rho_{\mathbf{H},X}}\colon \mathbf{{DM}}(X)\to\Dd_{\mathrm{H}}(X)$$ 
are colimit preserving symmetric monoidal functors, hence they have lax symmetric monoidal right adjoints $\rho^{H,X}_*$ and $\rho^{\mathbf{H},X}_*$ that create commutative algebras 

$$\hcal_X := \rho_*^{H,X}\Q_X \in\mathrm{CAlg}(\DM(X))$$ 
and $$\bm{\mathbf{\hcal}}_X := \rho_*^{\mathbf{H},X}\Q_X \in\mathrm{CAlg}(\mathbf{DM}(X))$$ in étale motives. Because the right adjoints commute with pushforwards, the counit maps induces algebra maps 
$$\hcal_{X/\C}:=\pi_X^*\hcal_\C\to\hcal_X$$ and $$\bm{\hcal}_{X/\C}:=\pi_X^*\bm{\hcal}_\C\to\bm{\hcal}_X,$$ where $\pi_X:X\to\Spec\C$ is the structural map. We will show below that those maps are in fact equivalences. 

\begin{defi}[Drew]
  The $\infty$-category of motivic Hodge modules over $X$ is 
  $$\mathrm{DH}(X):= \mathrm{Mod}_{\hcal_{X/\C}}(\DM(X)).$$
  There is an enriched version, that we will call the $\infty$-category of enriched mixed Hodge modules over $X$, defined as 
  $$\mathbf{DH}(X):=\mathrm{Mod}_{\bm{\hcal}_{X/\C}}(\mathbf{DM}(X)).$$
\end{defi}
An advantage of $\mathbf{DH}(X)$ when compared to $\mathrm{DH}(X)$ is that over a point, the $\infty$-category $\mathbf{DH}(X)$ is the whole derived category of graded polarisable mixed Hodge structures, whereas $\mathrm{DH}(X)$ only consists of mixed Hodge structures of geometric origin. However:
\begin{nota}
As the reader begins to guess, both situations, motivic Hodge modules and enriched motivic Hodge modules are parallel. Thus from now on except for the important results we will only deal with motivic Hodge modules, the proofs in the enriched case being the same. This is only to avoid doubling the size of this article, and the extensive use of bold.
\end{nota}

The functor $\DM$ is in fact valued in the $\infty$-category of $\DM(\C)$-linear presentable $\infty$-categories, and the construction $\mathrm{DH} = \mathrm{Mod}_{\hcal_{-/\C}}(\DM)$ can be rewritten 
$$\mathrm{DH} = \DM\otimes_{\DM(\C)}\mathrm{DH}(\C).$$
As $\DM(\C)$ and $\mathrm{DH}(\C)$ are rigid (this means that they are indization of small symmetric monoidal stable $\infty$-categories in which every object is dualisable: by \cite[Proposition 3.19]{MR3205601} they are compactly generated, by \cite[Proposition 2.2.27]{MR2423375} the motives of smooth projective varieties generate the compact objects, and by \cite[Théorème 2.2]{zbMATH02167776} the motives of smooth projective varieties are strongly dualisable), the $\infty$-functor 
\begin{equation}\label{infty2tens}-\otimes_{\DM(\C)}\mathrm{DH}(\C)\colon \mathrm{Pr}^L_{\DM(\C)}\to\mathrm{Pr}^L_{\mathrm{DH}(\C)}\end{equation} sending a $\DM(\C)$-linear presentable $\infty$-category $\ccal$ to $\ccal\otimes_{\DM(\C)}\mathrm{DH}(\C)\simeq\mathrm{Mod}_{\hcal_\C}(\ccal)$ has an $(\infty$-2)-categorical enhancement thanks to \cite[Section 4.4]{MR3607274}. In particular any adjunction 
$$\begin{tikzcd}
    \mathcal{C}
        \arrow[r, bend left = 25, "F"{name=D}]
        \arrow[r, leftarrow, bend right = 25, swap, "G"{name=C}]
          \arrow[d, from=D, to=C, phantom, "{\bot}"]
      & \mathcal{D}
\end{tikzcd}$$ between $\DM(\C)$-linear presentable $\infty$-categories, such that the right adjoint $G$ itself commutes with colimits and is $\DM(\C)$-linear (this is automatic if $F$ preserves compact objects by \cite[Proposition 4.9]{MR3607274} because $\DM(\C)$ and $\mathrm{DH}(\C)$ are rigid), the image under \cref{infty2tens} is again an internal $\mathrm{DH}(\C)$-adjunction, which means that $G\otimes\mathrm{DH}(\C)$ is the right adjoint to $F\otimes\mathrm{DH}(\C)$. Now it turns out that all properties of coefficients systems (\cite{MR4560376}) or motivic categories (\cite{MR3971240}) are properties of functors that are part of an internal adjunction, hence are preserved by ($\infty$-2)-functors. For example the property of $\A^1$-invariance of such system of $\infty$-categories is expressed as the counit $p_\sharp p^*\to\mathrm{Id}$ being an equivalence, where $p:\A^1_S\to S$ is the projection. Thus our functor 
$$\mathrm{DH}\colon \mathrm{Sch}_\C^\op \to\mathrm{Pr}^L_\mathrm{St}$$ is naturally a coefficient system in the sense of Drew and Gallauer, and therefore affords the 6 operations in a way that is compatible to the functor $\DM\to\mathrm{DH}$ (by \cite{MR2423375} and \cite{MR3971240}), and has $h$-descent (by \cite[Theorem 3.3.37]{MR3971240}). A proof of this result that does not use ($\infty$-$2$)-categories had been given by Drew in \cite[Theorem 8.10]{drewMotivicHodgeModules2018}.

\subsubsection{Embedding in mixed Hodge modules}
The Hodge realisation naturally factors as 
$$\DM\xrightarrow{\otimes \hcal_\C} \mathrm{DH} \xrightarrow{\underline{\rho_{\mathrm{H}}}}\Dd_{\mathrm{H}}$$ where all functors commute with the operations and all categories are compactly generated on schemes. Indeed one can see it in the following way:
$$\DM\to \mathrm{DH}\simeq\mathrm{Mod}_{\hcal_{(-)/\C}}(\DM)\xrightarrow{\rho_{\mathrm{H}}} \mathrm{Mod}_{\rho_{\mathrm{H}}(\hcal_{(-)/\C})}(\mathrm{DH})\to\mathrm{Mod}_{\Q}(\mathrm{DH})\xleftarrow{\sim}\mathrm{DH}$$
where the map $\rho_{\mathrm{H}}(\hcal_{(-)/\C})\to \Q$ is induced by the co-unit of the adjunction $(\rho_{\mathrm{H}},\rho^H_*)$.\\

The first observation is a consequence of the commutation with the operations:
\begin{lem}
  \label{ff}
  For each finite type $\C$-scheme, the functor $$\underline{\rho_{\mathrm{H}}}\colon\mathrm{DH}\to\Dd_{\mathrm{H}}$$ is fully faithful. Moreover the map 
  $$\hcal_{X/\C}\to\hcal_X$$ is an equivalence.
\end{lem}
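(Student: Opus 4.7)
The plan is to deduce both assertions from the projection formula for the Hodge realisation $\rho_H$, which itself follows from rigidity of constructible motives with rational coefficients in characteristic zero. Since the compact generators of $\DM(X)$ are dualisable (by Ayoub and Cisinski--Déglise) and $\rho_H$ is a colimit-preserving symmetric monoidal functor preserving dualisable objects and their duals, a standard Yoneda argument on compact $M$ produces the projection formula
\[\rho^{H,X}_*(\rho_H(M)\otimes A)\simeq M\otimes \rho^{H,X}_*(A)\]
for all $M\in\DM(X)$ and $A\in\Dd_H(X)$, using only dualisability of $M$ and the adjunction $\rho_H\dashv\rho^{H,X}_*$ before extending to all $M$ by colimits (both sides being colimit-preserving, the latter because $\rho_H$ preserves compact generators of the form $f_\sharp\Q_Y(n)$). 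Specialising $A=\Q_X$ canonically identifies the monad $\rho^{H,X}_*\circ\rho_H$ on $\DM(X)$ with the extension-of-scalars monad $(-)\otimes\hcal_X$.

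For the equivalence $\hcal_{X/\C}\simeq\hcal_X$, I would test the canonical comparison map against $\Map_{\DM(X)}(M,-)$ for every compact (hence dualisable) $M$. Combining the adjunctions $\rho_H\dashv\rho^{H,X}_*$ and $\pi_X^*\dashv\pi_{X*}$, the commutation of $\rho_H$ with $\pi_{X*}$ and with duals of dualisable objects, and the projection formulae for $\pi_X^*$ in $\DM$ and for $\rho_H$ over $\Spec\C$, both $\Map_{\DM(X)}(M,\hcal_{X/\C})$ and $\Map_{\DM(X)}(M,\hcal_X)$ naturally rewrite as $\Map_{\Dd_H(\C)}(\mathbf{1}_\C,\rho_H\pi_{X*}(M^\vee))$. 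A diagram chase showing that the canonical comparison map (which is adjoint to $\pi_X^*$ of the counit $\rho_H\rho^{H,X}_*\Q_\C\to\Q_\C$) realises this common identification then yields the equivalence.

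Granted the first part, $\mathrm{DH}(X)=\mathrm{Mod}_{\hcal_X}(\DM(X))$ and $\underline{\rho_H}$ is canonically the left adjoint of the Barr--Beck--Lurie comparison functor $\Dd_H(X)\to\mathrm{Mod}_{\rho^{H,X}_*\rho_H}(\DM(X))\simeq\mathrm{DH}(X)$. Full faithfulness reduces to showing that the unit $M\to\rho^{H,X}_*\underline{\rho_H}(M)$ is an equivalence for every $M\in\mathrm{DH}(X)$. Since both functors preserve colimits in $M$, it suffices to check this on free $\hcal_X$-modules $M=N\otimes\hcal_X$, on which the unit is precisely the monad identification $\rho^{H,X}_*\rho_H\simeq(-)\otimes\hcal_X$ established in the first paragraph.

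The hard part will be the projection formula for $\rho_H$, which depends essentially on the (non-trivial) rigidity of compact motives in characteristic zero and on the fact that $\rho_H$ preserves compact objects; once this is granted, the remainder of the argument is a formal play with adjunctions and the six-functor compatibilities already built into $\rho_H$ in the $\infty$-categorical setting.
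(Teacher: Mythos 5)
Your formal reductions are fine and in fact equivalent to the lemma: full faithfulness of $\underline{\rho_H}$ does reduce to the unit $N\otimes\hcal_{X/\C}\to\rho^{H,X}_*\rho_H(N)$ being an equivalence on free modules (the case $N=\Q_X$ giving $\hcal_{X/\C}\simeq\hcal_X$), and $\rho^{H,X}_*$ does preserve colimits because $\rho_H$ preserves the compact generators. The gap is in the only non-formal step, the projection formula itself: you justify it by asserting that compact objects of $\DM(X)$ are dualisable ("rigidity of constructible motives with rational coefficients in characteristic zero"). That is true over $\Spec\C$, but it is \emph{false} over any positive-dimensional base; monoidally dualisable constructible motives are, roughly, the lisse ones, and what holds for all constructible objects is only biduality with respect to Verdier duality $\sHom(-,\pi_X^!\Q)$, which does not give a monoidal dual. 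Concretely, for $j\colon\Gm\to\A^1_\C$ and $i\colon\{0\}\to\A^1_\C$, the compact object $j_!\Q$ is not dualisable: its candidate dual is $\sHom(j_!\Q,\Q)\simeq j_*\Q$, so dualisability would force $j_*j^*A\simeq j_*\Q\otimes A$ for all $A$; taking $A=i_*\Q$ the left-hand side vanishes while the right-hand side is $i_*i^*j_*\Q\simeq i_*(\Q\oplus\Q(-1)[-1])\neq 0$. Hence the "standard Yoneda argument on compact $M$" is unavailable over a general $X$, and the same false premise ("compact, hence dualisable") also enters your argument that $\hcal_{X/\C}\to\hcal_X$ is an equivalence, so both halves of the proposal collapse at the same point.

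The paper's proof (following Cisinski--Déglise, and the author's earlier Lemma 4.14) uses rigidity only where it is true, namely over the point: one first reduces full faithfulness to compact objects, then writes $\Map_{\mathrm{DH}(X)}(M,N)\simeq\Map_{\mathrm{DH}(\C)}(\Q,\pi_{X*}\sHom(M,N))$ and similarly in $\Dd_H(X)$, and uses that $\underline{\rho_H}$ commutes with $\sHom(M,-)$ for $M$ constructible and with $\pi_{X*}$ (because it commutes with the six operations) to reduce everything to full faithfulness over $\Spec\C$, where $\DM(\C)$, $\mathrm{DH}(\C)$ and $\Dd_H(\C)$ are rigid and your projection-formula argument does apply; the algebra statement then follows by adjunction. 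To repair your write-up you would need to replace "dualisability of compact $M\in\DM(X)$" by such a reduction to the point, or prove directly that the exchange map $f_\sharp\rho^{H,Y}_*\to\rho^{H,X}_*f_\sharp$ is an equivalence for $f$ smooth; as it stands the key step fails.
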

\begin{proof}
  The proof is the same as the proof of \cite[Lemma 4.14]{SwannRealisation} and is originally due to Cisinski and Déglise. We recall it here. First over $\Spec\C$ the functor is fully faithful by \cite[Lemma 4.11]{drewMotivicHodgeModules2018}, because $\DM(\C)$ is rigid and $\rho_\mathrm{H}$ preserves compact objects. Over a general base it suffices to prove that the functor is fully faithful on compact objects $\mathrm{DH}^\omega$. Using that 
  \[\mathrm{Map}_{\Dd_\mathrm{H}(X)}(\underline{\rho_{\mathrm{H}}}(M),\underline{\rho_{\mathrm{H}}}(N))\simeq \mathrm{Map}_{\Dd_\mathrm{H}(\C)}(\underline{\rho_{\mathrm{H}}}(\Q),p_*\sHom(\underline{\rho_{\mathrm{H}}}(M),\underline{\rho_{\mathrm{H}}}(N))),\] where $p\colon X\to \Spec(\C)$ is the structural morphism, and that the same formula holds in $\mathrm{DH}$, we see that the lemma would follow from the commutation of $\underline{\rho_{\mathrm{H}}}$ with pushforwards and internal homomorphisms. Because the image of the compact preserving functor $-\otimes\hcal\colon\DM\to\mathrm{DH}$ generates $\mathrm{DH}$, it suffices even to prove that $\underline{\rho_{\mathrm{H}}}$ commutes with pushforwards and internal homomorphisms when restricted to the essential image of $-\otimes\hcal\colon (\DM)^\omega\to\mathrm{DH}^\omega$. This finishes the proof as $\rho_\mathrm{H}=\underline{\rho_{\mathrm{H}}}\circ(-\otimes \hcal)$ and $-\otimes\hcal$ commute with the 6 operations.
\end{proof}

Denote by $\HHh^n$ the cohomology functor for the standard t-structure.

\begin{defi}[Ayoub]
  Let $X$ be a finite type $\C$-scheme. 
  \begin{enumerate}\item We let $\mathrm{MHM}_\mathrm{geo}(X)$ (\emph{resp}. $\mathrm{MHM}_{hod}(X))$ be the full subcategory of $\Ind\mathrm{MHM}_\mathrm{std}(X)$ generated under kernels, cokernels, extensions and filtered colimits by objects of the form $\HHh^n(f_*\pi_Y^*K)(m)$ with $K\in\mathrm{Mod}_\Q$ (\emph{resp}. with $K\in\Dd_{\mathrm{H}}(\C)$), $f:Y\to X$ a proper morphism, $\pi_Y:Y\to\Spec\C$ the structural morphism and $n,m\in\Z$. These are Grothendieck abelian categories that we called Mixed Hodge modules of geometric origin (\emph{resp}. of Hodge origin).
    \item We let $\Dd_{H,\mathrm{geo}}(X)$ (\emph{resp}. $\Dd_{H,\mathrm{hod}}(X)$) be the full subcategory of complexes $K\in\Dd_{\mathrm{H}}(X)$ such that for all $n\in\Z$ we have $\HHh^n(K)\in\mathrm{MHM}_\mathrm{geo}(X)$ (\emph{resp}. we have $\HHh^n(K)\in\mathrm{MHM}_\mathrm{hod}(X)$). Those are stable $\infty$-categories on which the canonical t-structure on $\Dd_{\mathrm{H}}(X)=\Dd(\Ind\mathrm{MHM}_\mathrm{std}(X))$ restricts. Moreover they are stable under pullbacks by proper base change.
  \end{enumerate}
\end{defi}
\begin{lem}
  The functors 
  $$\underline{\rho_{\mathrm{H}}}\colon \mathrm{DH}\to\Dd_{\mathrm{H}}$$ and 
  $$\underline{\rho_{\mathbf{H}}}\colon\mathbf{DH}\to\Dd_{\mathrm{H}}$$ land in 
  $\Dd_{H,\mathrm{geo}}$ and $\Dd_{H,\mathrm{hod}}$.
\end{lem}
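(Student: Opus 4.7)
The plan is to show that $\Dd_{H,\mathrm{geo}}(X)$ and $\Dd_{H,\mathrm{hod}}(X)$ are stable subcategories of $\Dd_H(X)$ closed under all small colimits, and then to exhibit compact generators of $\mathrm{DH}(X)$ (resp.\ $\mathbf{DH}(X)$) whose image under $\underline{\rho_H}$ (resp.\ $\underline{\rho_{\mathbf H}}$) visibly lies in the target subcategory. Since both realisations preserve colimits (each is obtained by base change in $\mathrm{Pr}^L$), this will suffice.

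First I would verify the closure properties. Closure of $\Dd_{H,\mathrm{geo}}(X)$ and $\Dd_{H,\mathrm{hod}}(X)$ under shifts is immediate, and the long exact sequence for the ordinary t-structure combined with the closure of $\mathrm{MHM}_\mathrm{geo}(X)$ (resp.\ $\mathrm{MHM}_\mathrm{hod}(X)$) under kernels, cokernels and extensions yields stability under fiber sequences. Exactness of filtered colimits in the Grothendieck abelian category $\Ind\,\mathrm{MHM}_\mathrm{ord}(X)$, together with closure of $\mathrm{MHM}_\mathrm{geo}$ (resp.\ $\mathrm{MHM}_\mathrm{hod}$) under filtered colimits, then gives stability under filtered colimits, hence under arbitrary small colimits in the stable $\infty$-category $\Dd_H(X)$. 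By definition, $f_*\pi_Y^*K$ lies in $\Dd_{H,\mathrm{geo}}(X)$ (resp.\ $\Dd_{H,\mathrm{hod}}(X)$) for every proper $f\colon Y\to X$ and every $K\in\mathrm{Mod}_\Q$ (resp.\ $K\in\Dd_H(\C)$).

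Next I would use that $\mathrm{DH}(X)=\mathrm{Mod}_{\hcal_{X/\C}}(\DM(X))$ is compactly generated by the free modules $M_X(Y)(n)\otimes\hcal_{X/\C}$ for $p\colon Y\to X$ smooth and $n\in\Z$. By \Cref{ff}, the map $\hcal_{X/\C}\to\hcal_X$ is an equivalence, so together with the commutation of $\rho_H$ with the six operations one computes
$$\underline{\rho_H}\bigl(M_X(Y)(n)\otimes\hcal_{X/\C}\bigr)\simeq \rho_H(M_X(Y)(n))\simeq p_\sharp\Q_Y(n)$$
in $\Dd_H(X)$. To place this in $\Dd_{H,\mathrm{geo}}(X)$, I would reduce to the case where $p$ has constant relative dimension $d$, invoke the relative purity identification $p_\sharp\simeq p_!(d)[2d]$, and apply Nagata's theorem to get an open immersion $j\colon Y\hookrightarrow\bar Y$ over $X$ with $\bar p\colon\bar Y\to X$ proper. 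Letting $i\colon Z\hookrightarrow\bar Y$ be the complementary closed immersion, pushing forward the localisation fiber sequence $j_!\Q_Y\to\Q_{\bar Y}\to i_*\Q_Z$ by $\bar p_*$ yields a fiber sequence
$$p_!\Q_Y\longrightarrow \bar p_*\Q_{\bar Y}\longrightarrow (\bar p\circ i)_*\Q_Z$$
whose two right-hand terms are proper pushforwards of constant sheaves, and thus belong to $\Dd_{H,\mathrm{geo}}(X)$ by the previous step. Closure under fibers and Tate twists then places $p_\sharp\Q_Y(n)$ in $\Dd_{H,\mathrm{geo}}(X)$.

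The enriched case is handled identically: compact generators of $\mathbf{DH}(X)$ have the form $M_X(Y)(n)\otimes K\otimes\bm{\hcal}_{X/\C}$ with $K\in\Dd_H(\C)$ compact, and $\Dd_H(\C)$-linearity of $\rho_{\mathbf H}$ combined with the projection formula for smooth $p$ rewrites their image as $p_\sharp\pi_Y^*K(n)$; substituting $\pi_{\bar Y}^*K$ and $\pi_Z^*K$ for the constant sheaves in the compactification argument places this in $\Dd_{H,\mathrm{hod}}(X)$. The only nonformal step in the whole proof is the compactification/localisation ingredient used to express the image of a smooth free generator as a fiber of proper pushforwards of constant sheaves; everything else is a direct consequence of compact generation, the definitions of the two subcategories, and the six-functor compatibility of the Hodge realisation.
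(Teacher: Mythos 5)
Your proof is correct, but it routes around the paper's key citation rather than reproducing it. The paper's own proof is a two-line argument: by Ayoub's generation result (\cite[Lemme 2.2.23]{MR2423375}), $\mathrm{DH}(X)$ (resp.\ $\mathbf{DH}(X)$) is generated under colimits by objects of the form $p_*\pi_Y^*K(m)$ with $p$ proper, and since $\underline{\rho_H}$ commutes with colimits and with the operations, the images of these generators are by definition objects all of whose $\HHh^n$ lie in $\mathrm{MHM}_\mathrm{geo}$ (resp.\ $\mathrm{MHM}_\mathrm{hod}$). You instead start from the tautological compact generators of $\mathrm{DH}(X)$, namely the free modules on $M_X(Y)(n)$ for $Y/X$ smooth, identify their realisations with $p_\sharp\Q_Y(n)$, and then prove by hand --- via relative purity $p_\sharp\simeq p_!(d)[2d]$, Nagata compactification and the localisation triangle $j_!\Q_Y\to\Q_{\bar Y}\to i_*\Q_Z$ pushed forward by the proper $\bar p$ --- that these land in $\Dd_{H,\mathrm{geo}}$; in effect you inline the proof of the relevant case of Ayoub's lemma instead of citing it. What your route buys is self-containedness (only formal six-functor identities are used, and the stability of $\Dd_{H,\mathrm{geo}}$, $\Dd_{H,\mathrm{hod}}$ under shifts, (co)fibers and filtered colimits, which the paper leaves implicit in its definition); what the paper's route buys is brevity and the stronger projective-generation statement it reuses later (its Lemme 2.2.27 refinement in \Cref{generices}). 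Two small points to tidy in your write-up: to apply Nagata you should take the smooth generators $Y\to X$ separated (affine or quasi-projective smooth $Y$ suffice and still generate by Zariski descent), and the identification $\underline{\rho_H}(M_X(Y)(n)\otimes\hcal_{X/\C})\simeq\rho_H(M_X(Y)(n))$ needs only the factorisation $\rho_H=\underline{\rho_H}\circ(-\otimes\hcal_{X/\C})$, not the equivalence $\hcal_{X/\C}\simeq\hcal_X$ from \Cref{ff}.
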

\begin{proof}
  This results from the commutation with the operations and the fact that $\mathrm{DH}(X)$ (\emph{resp.} $\mathbf{DH}(X)$) is generated under colimits by the $p_*\pi_X^*K(m)$ for $K\in\mathrm{Mod}_\Q$ (\emph{resp.} $K\in\Dd_{\mathrm{H}}(\C))$, thanks to \cite[Lemme 2.2.23]{MR2423375}.
\end{proof}
\begin{prop}
  \label{generices}
  Let $\eta$ be the generic point of an irreducible finite type $\C$-scheme $X$, and consider the functor 
  \begin{equation}\label{tututu}\colim_{\eta\in U}\mathrm{DH}(U)\to \colim_{\eta\in U}(\Dd_{H,\mathrm{geo}}(U))\end{equation} induced by $\underline{\rho_{\mathrm{H}}}$, where the colimit (taken in $\mathrm{Pr}^L$) runs over all the smooth open subsets $U$ of $X$ that contain $\eta$. It is an equivalence.
\end{prop}
\begin{proof}
  Note that by \cite[4.4.5.21, 5.5.7.8]{MR2522659}, the colimit is in fact computed in $\mathrm{Pr}^L_\omega$ the $\infty$-category of compactly generated $\infty$-categories, and we can rewrite it as 
  \[\Ind\Big(\colim_{\eta\in U}\mathrm{DH}(U)^\omega\to \colim_{\eta\in U}(\Dd_{H,\mathrm{geo}}(U))^\omega\Big)\] where now the colimit inside the $\Ind(-)$ is taken in $\catinfty$. By \cite[Proposition 2.1]{haine2025fullyfaithfulfunctorspushouts} the functor between compact objects is fully faithful, using \Cref{ff}. It remains to show that it is essentially surjective. It suffices to reach compact objects.

  Denote by 
  $$\dcal:=\colim_{\eta\in U}(\Dd_{H,\mathrm{geo}}(U)^\omega)$$ the colimit of the compact objects, computed in $\catinfty$.  
  
  For each $U$, the compact objects of $\Dd_{H,\mathrm{geo}}(U)$ have a standard t-structure and its heart is the category defined in the same way as $\mathrm{MHM}_\mathrm{geo}^\mathrm{std}(U)$ , but allowing only direct factors instead of all filtered colimits. As all transitions in the diagram are t-exact, the colimit $\dcal$ of the compact objects (which are the compact objects of the colimit) has a bounded t-structure. Moreover, 
  the category $\dcal$ is generated under finite colimits, finite limits, extensions and truncations by images in the colimit of objects of the form $g_*\pi_Z^*K(n)$ where $g\colon Z\to U$ is a proper morphism and $n\in\Z$ is an integer.

  By continuity of motives \cite[Lemma 5.1 (ii)]{MR4319065} and \cite[Theorem 4.8.5.11]{lurieHigherAlgebra2022}, the left hand side of \Cref{tututu} is canonically equivalent to $\mathrm{DH}(\eta):=\mathrm{Mod}_{u^*\hcal_\C}(\DM(\eta))$, where $u:\eta\to\Spec \C$ is the structural morphism. Moreover, by \cite[Lemma 2.2.27]{MR2423375}, the $\infty$-category $\mathrm{DH}(\eta)$ is generated under colimits by the $f_*\pi_Y^*K(n)$ where $f:Y\to\eta$ is a smooth and projective morphism, $n\in\Z$ and $K\in\mathrm{Mod}_\Q$. By spreading out (\cite[8.10.5(xiii)]{zbMATH03232548}, \cite[17.7.8(ii)]{zbMATH03245973}) and proper base change, such an object is the restriction to $\eta$ of a $g_*\pi_{Z}^*K(n)$ with $g:Z\to U$ a smooth projective morphism, and $U$ an open subset of $X$. 

  Now consider an object of the form $g_*\pi_Z^*K(n)$ in $\dcal$, where $g\colon Z\to U$ is a proper morphism and $n\in\Z$ is an integer. This object is the image by \Cref{tututu} of $(g_*\pi_Z^*K(n))_\eta \in\mathrm{DH}(\eta)$. By the above paragraph, there exists an open subset $V$ of $U$, a finite category $I$ and a functor $F\colon I\to \mathrm{DH}(V)$ such that for each $i\in I$, the object $F(i)$ is of the form $(f_i)_*\pi_{Z_i}^*K(n_i)$ for $f_i\colon Z_i\to U$ a \emph{smooth and projective} morphism and $n_i\in\Z$, and such that in $\mathrm{DH}(\eta)$, we have 
  \[ (g_*\pi_Z^*K(n))_\eta \simeq (\colim_I F(i))_\eta .\]
  In particular, in $\dcal$, the object $g_*\pi_Z^*K(n)$ is isomorphic to the image in the colimit over all open subsets, of the finite colimit $\colim_I F'(i)$, where $F'$ is the composition of the functor $F$ with \Cref{tututu}, which has the same formula for $F'(i)$.

  This gives that the category $\dcal$ is generated under finite colimits, finite limits, direct factors, extensions and truncations by images in the colimit of objects of the form $g_*\pi_Z^*K(n)$ with $g:Z\to U$ smooth and projective. We may also assume $U$ to be smooth.

 It turns out that to prove that the functor of the proposition is essentially surjective, it suffices to check that it reaches all objects of the heart of the compact objects. Thanks to \cite[Lemma 1.6.22]{ayoubAnabelianPresentationMotivic2022}, we see that it suffices to show that all subquotients of the image in the colimit of all $\HHh^n(f_*\pi_Y^*K)$, for $K\in\mathrm{Mod}_\Q$ compact and $f:Y\to U$ projective and smooth, are in the image. 
   Now by dévissage we can assume $K$ to be pure, so that by the conservation of weights under pushforwards by proper maps, and because $U$ is smooth hence $Y$ is also smooth so that $\pi_Y^*K$ is pure, the object $f_*\pi_Y^*K$ is pure.
   Using the decomposition theorem for pure complexes, we see that there is a decomposition \[f_*\pi_Y^*K\simeq \bigoplus_n\HHp^n(f_*\pi_Y^*K)[-n].\] In particular, each $\HHp^n(f_*\pi_Y^*K)$ lies in the image. 
   As $f$ is smooth and proper and $\pi_Y^*K$ is a dualisable object coming from $\Spec(\C)$, we see that (as in \cite[Theorem 2.2]{zbMATH02167776}, the dual is $f_*\pi_Y^*K^\vee(d)[2d]$ where $d$ is the relative dimension of $f$) the complex $f_*\pi_Y^*K$ is dualisable, so that $\HHp^n(f_*\pi_Y^*K)$ are local systems. Because duality datum consists of finitely many maps, and because compact objects in the colimit are dualisable, we see that any subquotient of the image of $\HHp^n(f_*\pi_Y^*K)$ in the colimit will lift as a subquotient in the category of local systems, of the restriction of $\HHp^n(f_*\pi_Y^*K)$ to a smaller open subset. Moreover, as $\HHp^n(f_*\pi_Y^*K)$ is lisse we have $\HHp^n(f_*\pi_Y^*K[d])[-d]\simeq \HHh^{n}(f_*\pi_Y^*K)$ where $d=\dim U$ (this is because over a smooth scheme, and for lisse complexes, the standard t-structure is the perverse t-structure, shifted by the dimension \cite[proof of Proposition 2.1.3]{MR0751966}). This proves that any subquotient of the image of $\HHh^n(f_*\pi_Y^*K)$ in the colimit is in fact a subquotient in perverse sheaves of a suitable restriction of $\HHp^n(f_*\pi_Y^*K[d])$ to an open subset of $U$. As this object is a pure perverse sheaf, any subquotient is in fact a direct factor, thus is in the image, and the proof is finished.
\end{proof}
This theorem is the same as Ayoub's theorem for Betti sheaves, and is proved the same way.
\begin{thm}
  \label{drewcomp}
  Let $X$ be a finite type $\C$-scheme.
  The functors 
  $$\underline{\rho_{\mathrm{H}}}\colon \mathrm{DH}(X)\to\Dd_{H,\mathrm{geo}}(X)$$ and 
  $$\underline{\rho_\mathbf{H}}\colon \mathbf{DH}(X)\to\Dd_{H,\mathrm{hod}}(X)$$  are equivalences of categories. 
\end{thm}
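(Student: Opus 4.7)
The plan is to combine the full faithfulness of Lemma~\ref{ff} with a Noetherian induction on $\dim X$ whose geometric input at the generic point is Proposition~\ref{generices}, and whose glueing step relies on the open/closed recollement of the six operations. Write $\mathcal{E}(X)\subseteq \Dd_H(X)$ for the essential image of $\underline{\rho_H}$; it is a stable full subcategory closed under colimits and under all six operations, and by the preceding lemma it is contained in $\Dd_{H,\mathrm{geo}}(X)$. The task is to show that every $M\in \Dd_{H,\mathrm{geo}}(X)$ lies in $\mathcal{E}(X)$.

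The base case $X=\Spec\C$ is exactly Proposition~\ref{generices} applied to the unique generic point: the indexing diagram of opens is trivial, so the proposition collapses to an equivalence $\mathrm{DH}(\C)\isolong\Dd_{H,\mathrm{geo}}(\C)$. For the inductive step, I assume the theorem for all finite type $\C$-schemes of strictly smaller dimension. Fix $M\in \Dd_{H,\mathrm{geo}}(X)$ and let $\{X_\alpha\}$ be the irreducible components with generic points $\eta_\alpha$. For each $\alpha$ the open $X_\alpha\setminus \bigcup_{\beta\neq\alpha}X_\beta$ contains $\eta_\alpha$, and Proposition~\ref{generices} furnishes an open neighbourhood $U_\alpha$ of $\eta_\alpha$ inside it on which $M|_{U_\alpha}$ lies in the essential image of $\mathrm{DH}(U_\alpha)$. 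By construction the $U_\alpha$ are pairwise disjoint, so $V:=\bigsqcup_\alpha U_\alpha$ is a dense open in $X$ with $M|_V\in\mathcal{E}(V)$, and the closed complement $Z:=X\setminus V$ has $\dim Z<\dim X$ since $Z\cap X_\alpha$ is a proper closed subset of each irreducible component.

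I then show the closed restriction also lifts. Because $i^*$ is ordinary-t-exact and proper base change sends a generator $\HHh^n(f_*\pi_Y^*K)(m)$ of $\mathrm{MHM}_{\mathrm{geo}}(X)$ to $\HHh^n(g_*\pi_{Y\times_XZ}^*K)(m)$, the restriction $i^*M$ lies in $\Dd_{H,\mathrm{geo}}(Z)$. By the inductive hypothesis it lifts to an object $\tilde N_Z\in\mathrm{DH}(Z)$; similarly $M|_V$ lifts to $\tilde N_V\in\mathrm{DH}(V)$. Now $\underline{\rho_H}$ commutes with $j_!$ and $i_*$, so $j_!\tilde N_V$ and $i_*\tilde N_Z$ lift $j_!j^*M$ and $i_*i^*M$ respectively. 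The localisation triangle $j_!j^*M\to M\to i_*i^*M$ is determined by its connecting map $i_*i^*M\to j_!j^*M[1]$; by Lemma~\ref{ff} this map comes uniquely from a map $i_*\tilde N_Z\to j_!\tilde N_V[1]$ in $\mathrm{DH}(X)$. The fibre of the latter is the desired lift of $M$, so $M\in\mathcal{E}(X)$ and the induction closes. The enriched statement is proved by the same argument with the enriched version of Proposition~\ref{generices}, whose proof is the same but with $K\in\Dd_H(\C)$.

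The conceptual difficulty---that $\mathrm{DH}(X)$ has no a~priori t-structure, so the truncations $\HHh^n(f_*\pi_Y^*K)(m)$ generating $\mathrm{MHM}_{\mathrm{geo}}$ do not obviously lie in the image---has already been absorbed into Proposition~\ref{generices} via the semi-simplicity of smooth and proper pushforwards of pure Hodge modules. What remains here is the standard Noetherian/recollement formalism, so the only step demanding attention is verifying that the boundary map in the open/closed triangle lifts to $\mathrm{DH}(X)$, which is immediate from full faithfulness on the already-lifted endpoints.
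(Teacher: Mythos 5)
Your argument is correct and is essentially the paper's own proof: reduction to the generic points via \Cref{generices}, then the localisation triangle $j_!j^*M\to M\to i_*i^*M$ together with the full faithfulness of \Cref{ff} to lift the connecting map and glue. The only (harmless) differences are that you induct on dimension over all components at once where the paper runs a Noetherian induction with a single irreducible open, and that you spell out the stability of $\Dd_{H,\mathrm{geo}}$ under $i^*$ and the lifting of the boundary map, which the paper leaves implicit.
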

\begin{proof}
  This is now a simple Noetherian induction: if $K\in\Dd_{H,\mathrm{geo}}(X)$ is compact, we may find a nonempty irreducible open subset $U$ of $X$ with generic point $\eta$ and \Cref{generices} ensures that up to reducing the size of $U$, the restriction $K_{\mid U}$ of $K$ to $U$ is in the image, but the localisation sequence $$j_!K_{\mid U}\to K\to i_*i^*K$$ with $j:U\to X$ and $i:Z\to X$ the complement, together with the full faithfulness and a Noetherian induction, give that $K$ is in the image.
\end{proof}

This proves that all desiderata of Drew in \cite[Desiderata 1.1]{drewMotivicHodgeModules2018} are fulfilled.

\begin{rem}
  In a forthcoming work with Raphaël Ruimy, we use this point of view of motivic Hodge modules to construct categories of mixed Hodge modules of geometric origin that have $\Z$-linear coefficients.
\end{rem}

  Although what we have done ensures that the goals of Drew in \cite[Desiderata 1.1]{drewMotivicHodgeModules2018} are achieved, it may still seem unsatisfactory: another motivation for Drew's work was probably to give an alternative construction of mixed Hodge modules. Here our proof uses mixed Hodge modules hence it is not quite right. It should be possible to prove that motivic Hodge modules afford a t-structure without comparing them to M. Saito's mixed Hodge modules. 

  We give here a weak reason for which one could possibly define the t-structure only by geometric means (a t-structure on the generic points is enough to have a t-structure for every variety by gluing, see \cite[Theorem 3.1.4]{MR3347995}):

  \begin{prop}
    Let $K=\C(X)$ be the fraction field of an integral complex algebraic variety. Then the t-structure on 
    $$\mathrm{DH}(K/\C):= \colim_{U}\mathrm{DH}(U),$$ where the colimit runs over nonempty open subsets of $X$, is characterised by the fact that 
    $\mathrm{DH}(K/\C)^{\leqslant 0}$ is the smallest subcategory of $\mathrm{DH}(K/\C)$ stable under colimits, twists and extensions that contains the objects 
    $f_\sharp\Q_X$ for $f:X\to \Spec K$ smooth and affine.

  \end{prop}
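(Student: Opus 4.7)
Denote $\eta = \Spec K$, so that by continuity of motives $\mathrm{DH}(K/\C) \simeq \mathrm{DH}(\eta)$, equipped with the ordinary t-structure inherited via \Cref{drewcomp} from the equivalence $\underline{\rho_H}\colon \mathrm{DH}(\eta) \xrightarrow{\sim} \Dd_{H,\mathrm{geo}}(\eta)$. Let $\ccal$ denote the smallest subcategory of $\mathrm{DH}(\eta)$ stable under colimits, Tate twists and extensions that contains $f_\sharp\Q_Y$ for all smooth affine $f\colon Y\to\eta$. The plan is to establish both inclusions $\ccal = \mathrm{DH}(\eta)^{\leqslant 0}$, of which the second is the substantive one.

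The easy inclusion $\ccal \subseteq \mathrm{DH}(\eta)^{\leqslant 0}$ is obtained from the purity isomorphism $f_\sharp \simeq f_!(d)[2d]$ for $f$ smooth of relative dimension $d$ combined with Artin vanishing: when $f$ is moreover affine, $R^i f_!\Q_Y = 0$ for $i<d$ in the ordinary t-structure, so $f_\sharp\Q_Y$ lives in ordinary cohomological degrees $[-d, 0]$. The right-hand side is closed under colimits (as the connective half of a compactly generated t-structure), Tate twists (which are t-exact), and extensions, yielding the inclusion.

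For the converse $\mathrm{DH}(\eta)^{\leqslant 0} \subseteq \ccal$, I would reduce to the heart as follows: since the ordinary t-structure on $\mathrm{DH}(\eta)$ is compactly generated and bounded on compact objects, any $M \in \mathrm{DH}(\eta)^{\leqslant 0}$ is a filtered colimit of bounded compact $M_\alpha \in \mathrm{DH}(\eta)^{\leqslant 0}$, each an iterated extension of its ordinary cohomology pieces $H^i(M_\alpha)[-i]$ with $i\leqslant 0$. By extension-closure together with the fact that positive shifts are finite colimits (via $X[1] = \mathrm{cofib}(X\to 0)$), it suffices to exhibit every compact heart object $N \in \mathrm{MHM}_\mathrm{geo}^\omega(\eta)$ in $\ccal$ at suitable non-positive shifts. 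By the defining generation of $\mathrm{MHM}_\mathrm{geo}$, combined with de Jong alterations and the semisimplicity of pure Hodge modules under smooth proper pushforwards, every such $N$ is a direct summand of $H^n(f_*\Q_Y)(m)$ for some $f\colon Y \to \eta$ smooth projective of dimension $d$. Deligne's decomposition $f_*\Q_Y \simeq \bigoplus_{n=0}^{2d} H^n(f_*\Q_Y)[-n]$ combined with purity $f_\sharp\Q_Y \simeq f_*\Q_Y(d)[2d]$ then exhibits $H^n(f_*\Q_Y)(d)[2d-n]$ as a direct summand of $f_\sharp\Q_Y$; to place the latter inside $\ccal$, I would use Zariski descent over a finite affine open cover $\{U_i\}$ of the separated $Y$ (whose intersections $U_I$ are again affine), giving $f_\sharp\Q_Y \simeq \colim_{I}(f|_{U_I})_\sharp\Q_{U_I}$ as a finite colimit of smooth affine generators.

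The genuinely delicate step is matching the intrinsic shift $[2d-n]$ produced by Deligne's decomposition with the target range $[k]$, $k\geqslant 0$, needed to run the Postnikov reduction for all non-positive shifts of an arbitrary $N$. I would address this by an Ayoub-style iteration using weak Lefschetz hyperplane sections: replacing $Y$ by successive smooth projective hyperplane sections preserves the low-degree cohomology $H^n$ while reducing the dimension $d$, so the available shifts $[2d-n]$ can be made to sweep out every $k\geqslant 0$ for each fixed $N$. Combined with twist-, retract- (as a special colimit), and extension-closure, this gives $N[k] \in \ccal$ for all required $N$ and $k$, completing the identification $\ccal = \mathrm{DH}(\eta)^{\leqslant 0}$. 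This is the main obstacle and the geometric heart of the argument, directly parallel to the use of \cite[Lemma~1.100]{ayoubAnabelianPresentationMotivic2022} invoked in the proof of \Cref{generices}.
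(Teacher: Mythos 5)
Your overall architecture (easy inclusion via Artin vanishing, reduction to compact heart objects, Čech/Zariski passage from projective to affine generators) is parallel to the paper's, but the hard inclusion contains two genuine errors. First, the claim that every compact heart object $N$ of $\mathrm{MHM}_\mathrm{geo}(\eta)$ is a direct summand of some $\HHh^n(f_*\Q_Y)(m)$ with $Y$ smooth projective is false: those objects are pure and polarisable, hence semisimple, whereas the heart is by definition closed under extensions and so contains genuinely mixed, non-semisimple objects (for instance the nonsplit extensions of $\Q$ by $\Q(1)$ classified by $K^\times\otimes\Q$, or $\HHh^1$ of an open curve); a mixed object is never a retract of a pure one. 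The paper never needs such a statement: its proof only uses that the heart is \emph{generated} under kernels, cokernels, extensions, twists and filtered colimits by the $\HHh^n(f_*\Q_Y)(m)$ (as established in the proof of \Cref{generices}), transfers that generation statement to the prestable part $\mathrm{DH}(K/\C)^{\leqslant 0}$, and then uses the decomposition $f_*\Q_Y\simeq\bigoplus_n\HHh^n(f_*\Q_Y)[-n]$ followed by the affine Čech cover; no alterations and no Lefschetz induction appear.

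Second, and more seriously, the Lefschetz ``shift sweep'' you propose for the step you correctly identify as the crux does not work. After hard Lefschetz you may assume $n\leqslant d$, and weak Lefschetz preserves $\HHh^n$ only for linear sections $Y_e$ of dimension $e\geqslant n$; the decomposition then exhibits $N$ (up to twist) as a retract of $(f_e)_*\Q_{Y_e}[2e]$ only in shift $2e-n$, so the shifts you can reach are $\{n,n+2,\dots\}$ --- bounded below by $n$ and of fixed parity --- and never sweep out all $k\geqslant 0$. But by your own reduction what you need most is shift $0$: the pieces $H^i(M_\alpha)[-i]$, $i\leqslant 0$, are \emph{non-negative} shifts of heart objects, and non-negative shifts come for free from colimit-closure, so the whole problem is to put $N$ itself (shift $0$) into $\ccal$. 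No retract argument can do this: in $f_*\Q_{Y'}[2\dim Y']$ the only summand sitting in shift $0$ is the top cohomology, which is an Artin--Tate object, so a non-Tate pure $N$ (say $\HHh^1$ of an elliptic curve) is never a retract in shift $0$ of such objects, nor of the $f_\sharp\Q_X$ themselves. Reaching the heart in degree $0$ has to exploit extensions and colimits of the genuinely mixed affine generators rather than direct summands of projective pushforwards, and your argument as written does not supply that mechanism; this is the gap.
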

  \begin{proof}
    We have that $\mathrm{DH}(K/\C)$ is compactly generated by $\colim_U\mathrm{DH}_c(U)$ where this time the colimit is taken in $\catinfty$. Thus we know that $\mathrm{DH}(K/\C)^{\leqslant 0}$ is the prestable $\infty$-category 
    $$\mathrm{DH}(K/\C)^{\leqslant 0}\simeq\Ind\big(\colim_U \mathrm{DH}_c(U)^{\leqslant 0}\big).$$
    In particular, as the heart of each $\mathrm{DH}(U)$ is generated under kernels, cokernels, extensions, twists and filtered colimits by objects of the form $\HHh^n(f_*\Q_Y)$ with $f\colon Y\to U$ proper, the proper base change theorem ensures that the heart of $\mathrm{DH}(K/\C)$ is generated under the same operations by the $\HHh^n(f_*\Q_Y)$ with $f:Y\to \Spec K$ proper and $n\in \N$. In fact we have seen in the proof of \Cref{generices} that we can even assume that $f$ is smooth and projective. Thus the prestable $\infty$-category $\mathrm{DH}(K/\C)^{\leqslant 0}$ is generated under colimits, extensions and twists by objects of the form $\HHh^n(f_*\Q)$ for $f\colon Y\to \Spec K$ smooth and projective. Moreover as this is true over a small open subset $U$ of $X$, we have a decomposition in $\mathrm{DH}(K/\C)$ of the form 
    $$f_*\Q_Y \simeq \bigoplus_n \HHh^n(f_*\Q_Y)[-n].$$
    This implies that $\mathrm{DH}(K/\C)^{\leqslant 0}$ is generated by the $f_*\Q_Y[2\dim Y]$ for $f:Y\to \Spec K$ projective and smooth. By covering $Y$ with smooth and affine schemes $(U_i)_{i\leqslant n}$, if one denote by $U_J = \cap_{i\in J}U_J$ for $J\subset \{1,\dots,n\}$, as we have a colimit diagram 
    $$ \begin{tikzcd}
        \dots
            \arrow[r, leftarrow, shift left = 0.5em]
            \arrow[r, shift left = 0.25em]
            \arrow[r, leftarrow]
            \arrow[r, shift right = 0.25em]
            \arrow[r, leftarrow, shift right = 0.5em]
          &  \bigoplus_{\mid J\mid = 2}f^{U_J}_!\Q_{U_J}[2d]
              \arrow[r, leftarrow, shift left = 0.25em]
              \arrow[r]
              \arrow[r, leftarrow, shift right = 0.25em]
            &  \bigoplus_{\mid J\mid = 1}f^{U_J}_!\Q_{U_J}[2d]
                \arrow[r]
              & f_*\Q_Y[2d]
    \end{tikzcd} $$
    by Zariski descent, with $f^{U_J}:U_J\to Y\to \Spec K$ and $d=\dim Y$, we see that $\mathrm{DH}(K/\C)^{\leqslant 0}$ is generated by the $f_!\Q_X[2d]$ for $f:X\to\Spec K$ smooth and affine of dimension $d$. As $f_\sharp\Q_X \simeq f_!\Q_X(d)[2d]$, this finishes the proof.

  \end{proof}

\subsection{Extension of the derived category of mixed Hodge modules to algebraic stacks}
\label{sectionStacks}

\subsubsection{Extensions and operations}
We have at hand a $h$-hypersheaf $$\Dd_\mathrm{H}\colon\mathrm{Sch}_\C^\op\to\mathrm{CAlg}(\mathrm{Pr}^L).$$
The functor $\Dd_{\mathrm{H}}$ extends (as a right Kan extension) canonically to a $h$-hypersheaf all Artin stacks over $\C$. More explicitly, given a presentation $\pi:X\to\mathfrak{X}$ of an algebraic stack $\Xfrak$ , that is a smooth surjection with $X$ a scheme, and assuming that the diagonal of $\mathfrak{X}$ is representable by schemes, and denoting by $$X^{n/\mathfrak{X}}:= X\times_\mathfrak{X}X\times_\mathfrak{X}\cdots X $$ the $n$-th fold of $X$ over $\mathfrak{X}$ (this is a scheme as the diagonal of $\mathfrak{X}$ is representable), we have a limit diagram in $\mathrm{Pr}^L$

\begin{equation}
  \label{defDesc}
  \begin{tikzcd}
    \Dd_{\mathrm{H}}(\mathfrak{X})
          \arrow[r,"\pi^*"]
        &  \Dd_{\mathrm{H}}(X)
            \arrow[r, shift left = 0.25em]
            \arrow[r, shift right = 0.25em]
          &  \Dd_{\mathrm{H}}(X\times_\mathfrak{X}X)
              \arrow[r, shift left = 0.5em]
              \arrow[r]
              \arrow[r, shift right = 0.5em]
            &  \dots
  \end{tikzcd}.
\end{equation}
If the diagonal of $\mathfrak{X}$ is not representable by schemes, one has to first define $\D_\mathrm{H}$ for algebraic spaces using the same formula as above.

Using Liu and Zheng gluing technique (\cite{liuEnhancedSixOperations2017}) as Khan in \cite[Appendix A.]{khanVirtualFundamentalClasses2019}, one can prove that the extension of $\Dd_{\mathrm{H}}$ to (higher) Artin stacks still have the six operations. Unless mentioned otherwise, all stacks and schemes considered will be locally of finite type over $\C$. This subsection is more or less book keeping of the work of Liu, Zheng and Khan.  More precisely, we have:

\begin{thm}[Liu-Zheng, Khan]
  \label{opera}
  \begin{enumerate}
    \item For every Artin stack $\mathfrak{X}$ there is a closed symmetric monoidal structure on $\Dd_{\mathrm{H}}(\mathfrak{X})$.
    \item For any morphism $f:\mathfrak{Y}\to\mathfrak{X}$ there is an adjunction $$ \begin{tikzcd}
        \Dd_{\mathrm{H}}(\mathfrak{X})
            \arrow[r, bend left = 25, "f^*"{name=D}]
            \arrow[r, leftarrow, bend right = 25, swap, "f_*"{name=C}]
              \arrow[d, from=D, to=C, phantom, "{\bot}"]
          & \Dd_{\mathrm{H}}(\mathfrak{Y})
    \end{tikzcd}$$ with $f^*$ a symmetric monoidal functor.
    \item For any locally of finite type morphism of Artin stacks $f:\mathfrak{X}\to\mathfrak{Y}$ there is an adjunction (functorial in $f$)
    $$\begin{tikzcd}
        \Dd_{\mathrm{H}}(\mathfrak{X})
            \arrow[r, bend left = 25, "f_!"{name=D}]
            \arrow[r, leftarrow, bend right = 25, swap, "f^!"{name=C}]
              \arrow[d, from=D, to=C, phantom, "{\bot}"]
          & \Dd_{\mathrm{H}}(\mathfrak{Y})
    \end{tikzcd}.$$
    \item The operations $f_!$ satisfy base change and projection formula against $g^*$, and the operations $f^!$ satisfy base change against $g_*$. If $f$ is representable by Deligne-Mumford stacks, then there is a natural transformation $\alpha_f:f_!\to f_*$, which is an isomorphism if $f$ is proper and representable by algebraic spaces (for a strenghtening, see \Cref{better4}).
    \item For any closed immersion $i:\mathfrak{Z}\to\mathfrak{X}$ of Artin stacks with $j:\mathfrak{U}\to\mathfrak{X}$ the inclusion of the open complement, we have a pullback diagram 
    $$\begin{tikzcd}
        \Dd_{\mathrm{H}}(\mathfrak{Z}) \arrow[r,"i_*"] \arrow[d] 
          & \Dd_{\mathrm{H}}(\mathfrak{X}) \arrow[d,"j^*"] \\
        * \arrow[r]
          & \Dd_{\mathrm{H}}(\mathfrak{U})
    \end{tikzcd}.$$

  \item For any Artin stack $\mathfrak{X}$, the functor 
  $$\pi^*:\Dd_{\mathrm{H}}(\mathfrak{X}) \to\Dd_{\mathrm{H}}(\A^1_\mathfrak{X})$$ is fully faithful.
  \item For a smooth morphism $f:\mathfrak{Y}\to\mathfrak{X}$ of relative dimension $d$, we have a purity isomorphism 
  $$\mathfrak{p}_f:  f^*(d)[2d]\simeq f^!.$$
  \item The $!$-functoriality $\Dd_{\mathrm{H}}(-)^!$ is also an étale hypersheaf.
\end{enumerate}
\end{thm}
\begin{proof}
  The proofs of \cite[Appendix A]{khanVirtualFundamentalClasses2019} hold for any motivic coefficient system, so that 1,2,3 and 4 are \cite[Theorem A.5]{khanVirtualFundamentalClasses2019}, 5. is \cite[Theorem A.9]{khanVirtualFundamentalClasses2019}, 6. is \cite[Proposition A.10]{khanVirtualFundamentalClasses2019} and 7. is \cite[Theorem A.13]{khanVirtualFundamentalClasses2019}. The last point 8. follows from \cite[Proposition 4.3.5]{liuEnhancedSixOperations2017}.

\end{proof}

\begin{rem}
All results above hold more generally for higher Artin stacks, except for $4.$ where one need $f$ to be $0$-truncated on top of being proper for $f_!$ to be isomorphic to $f_*$. 
\end{rem}

\begin{rem}
  One could state the same result for mixed Hodge modules of geometric or Hodge origin to stacks, and all the results we stated above and below also hold in this more restrictive setting. Note that because the construction is a right Kan extension from the category of correspondences of schemes to the category of correspondences of algebraic stacks, the functor $\mathrm{DH}\to\mathrm{D}_\mathrm{H}$ will automatically commute with the left adjoints $f^*$, $f_!$ and $\otimes$, but the commutation with the right adjoints is unclear. The commutation with internal homomorphisms and with $f_*$ for $f$ a representable morphism can be checked on a smooth atlas, but the cases of $f_*$ and $f^!$ for a general non-representable morphism seems subtle. See \Cref{propHB} for a result in that direction. 
\end{rem}

Now, as $\Dd^b(\mathrm{MHM}(-))$ also satisfies $h$-descent (in fact, étale descent is sufficient by \cite[Corollary 3.4.7]{khan2025lisseextensionsweaves}), we can also right Kan extend it and have the same formula as \cref{defDesc}. In particular as limits in $\catinfty$ are computed term wise, if one denote by $\Dd^b_{\mathrm{H},c}(-)$ the extension of $\Dd^b(\mathrm{MHM}(-))$ to (higher) Artin stacks, we have a fully faithful natural transformation 
$$\iota\colon \Dd^b_{\mathrm{H},c}(-)\to \Dd_{\mathrm{H}}(-).$$
The question whether for a general morphism $f$ of Artin stacks, the operations $f_*$ and $f_!$ preserve constructibility is a bit subtle. Of course, it is false in general that $f_*\Q$ is cohomologically bounded.

For a finite type $\C$-scheme $X$, the category $\Dd_{\mathrm{H}}(X)=\Ind\Dd^b(\mathrm{MHM}(X))$ admits a t-structure by \cite[Lemma C.2.4.3]{lurieSpectralAlgebraicGeometry}.  The t-structure on $\Dd_{\mathrm{H}}(X)$ is right complete and compatible with filtered colimits. We will use several times that a limit of stable $\infty$-categories with t-structures and t-exact transition functors is endowed with a canonical t-structure (\cite[Lemma 3.2.18]{MR4061978}).

\begin{prop}
  \label{tstructureperv}
  Let $\mathfrak{X}$ be an Artin stack locally of finite type  over $\C$. The $\infty$-category $\Dd_{\mathrm{H}}(\mathfrak{X})$ admits a t-structure such that for any smooth presentation $\pi:X\to\mathfrak{X}$ the conservative functor $\pi^*$ is t-exact up to a shift. Moreover, the t-structure is right complete and compatible with filtered colimits. It restricts to a t-structure on $\Dd^b_{\mathrm{H},c}(-)$.
\end{prop}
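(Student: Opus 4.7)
The plan is to glue the ordinary t-structures from the scheme level along a smooth atlas. The key input is that for any smooth morphism $f\colon Y\to X$ of finite type $\C$-schemes, the pullback $f^*\colon \Dd_H(X)\to\Dd_H(Y)$ is t-exact for the ordinary t-structure: indeed, that t-structure is characterised pointwise ($K\in\Dd_H(X)^{[a,b]}$ iff $x^*K\in\Dd^{[a,b]}(\mathrm{MHS}^p_\C)$ for every point $x$), so the equality $y^*f^* = (f\circ y)^*$ combined with t-exactness of base change along field extensions gives the claim; if $f$ is moreover surjective, $f^*$ detects the t-structure.

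Pick a smooth atlas $\pi\colon X\to\Xfrak$ and let $X_\bullet$ be its Čech nerve. All face and degeneracy maps in $X_\bullet$ are smooth, so every transition pullback in the limit diagram \cref{defDesc} is t-exact. Applying the standard gluing principle for t-structures on limits of presentable $\infty$-categories with t-exact transitions, we obtain a t-structure on $\Dd_H(\Xfrak)\simeq\lim\Dd_H(X_\bullet)$ where $K$ is connective (resp.\ coconnective) iff each $\pi_n^*K$ is. Concretely, $\Dd_H(\Xfrak)^{\leqslant 0}$ is the full subcategory cut out pointwise; it is presentable and closed under colimits in $\Dd_H(\Xfrak)$, hence admits a right adjoint by the adjoint functor theorem---this is the desired truncation functor. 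Vanishing of $\Hom(K,L)$ for $K\in\Dd_H(\Xfrak)^{\leqslant 0}$ and $L\in\Dd_H(\Xfrak)^{\geqslant 1}$ is immediate from the limit formula for mapping spaces together with the scheme-level vanishing. By construction $\pi^*$ is t-exact (the ``up to a shift'' leaves room for a perverse-type normalisation by the relative dimension of $\pi$), conservative since $\pi$ is faithfully flat, and independence from the choice of presentation follows by comparing two atlases through a common smooth refinement.

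Right completeness, left separation, and compatibility with filtered colimits transfer from the scheme-level t-structure on $\Dd_H(X)$---where they hold by compact generation, the finite cohomological dimension of $\mathrm{MHS}^p_\C$, and the equivalence $\Dd_H(X)\simeq\Dd(\Ind\mathrm{MHM}(X))$ recalled just before the proposition---to the limit, since the transitions are t-exact and colimit preserving. The restriction to $\Dd^b_H(\Xfrak)$ is then automatic: bounded complexes pull back to bounded ones along smooth morphisms, and the truncations of a level-wise bounded system of descent data remain level-wise bounded. The main obstacle is verifying that the level-wise truncation functors glue into well-defined truncation functors on $\Dd_H(\Xfrak)$, i.e.\ respect the descent data; once t-exactness at every level of the simplicial diagram is in hand, this is essentially forced, but it does require some care with the coherences, which is where the pointwise exactness of smooth pullbacks plays its crucial role.
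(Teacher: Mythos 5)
There is a genuine gap: you have constructed the wrong t-structure. The proposition (see its use throughout the rest of the paper: weights are defined via $\HHp^n$, Verdier duality and $\Upsilon_f[-1]$ are asserted to be \emph{perverse} t-exact, and the very next proposition treats the ordinary t-structure separately) is about the perverse t-structure on $\Dd_H(\Xfrak)$; that is exactly why the statement says ``t-exact \emph{up to a shift}''. Your proof glues the \emph{ordinary} t-structure, for which every pullback (smooth or not) is t-exact on the nose --- this is the content of the subsequent proposition in the paper, whose proof is the same as this one ``except that we do not need to shift the pullback functors''. The whole difficulty that the present proposition addresses is that smooth pullbacks are \emph{not} perverse t-exact; only $f^*[d]$ is, for $f$ smooth of relative dimension $d$. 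Hence one must first reduce to $\Xfrak$ connected and choose a presentation $\pi:X\to\Xfrak$ of constant relative dimension $d$, note that all the projections in the \v{C}ech nerve are then smooth of the same relative dimension $d$, and run the descent diagram \cref{defDesc} with the shifted, perverse t-exact functors $p_n^*[d]$; independence of the chosen presentation then also needs an argument (the paper compares two presentations via $Y\times_\Xfrak X\to\Xfrak$). Your parenthetical remark that the ``up to a shift'' merely ``leaves room for a perverse-type normalisation by the relative dimension of $\pi$'' defers precisely this step, which is the mathematical core of the statement.

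If one insists on a purely literal reading, your ordinary t-structure does make $\pi^*$ t-exact ``up to a (zero) shift'', and your arguments for existence of truncations (connective part cut out levelwise, adjoint functor theorem), conservativity of $\pi^*$, left separation and compatibility with filtered colimits are essentially sound --- indeed there is no need to ``glue levelwise truncation functors'' as you worry at the end, since defining the connective part and invoking the adjoint functor theorem suffices. But with that reading the proposition would be redundant with the following one and useless for every later perverse statement, so the intended result remains unproved: you still need the connectedness/constant-dimension reduction and the shifted gluing along the \v{C}ech nerve, and then the verification (as in the paper) that right completeness can be checked smooth-locally using the shifted t-exact pullbacks.
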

\begin{proof}
  We have to first deal with the case $\mathfrak{X}$ is an algebraic space. In this case, the map $\pi$ can be chosen étale, and as the diagonal of $\mathfrak{X}$ is representable by schemes, the limit diagram 
  \[\Dd_{\mathrm{H}}(\Xfrak)\to\lim_\Delta(X^{n/X})\] has t-exact transitions whence the limit $\Dd_{\mathrm{H}}(\Xfrak)$ has a perverse t-structure. The left separation and compatibility with colimits can be checked after applying $\pi^*$, hence hold. The right completeness is checked exactly as in the case of stacks, that we deal with below.

  We can assume that $\mathfrak{X}$ is a connected algebraic stack. Choose a presentation $\pi:X\to \Xfrak$ which is smooth of relative dimension $d$ for some integer $d$. Then all the projections $$p_n:X\times_\Xfrak X \times_\Xfrak \cdots \times_\Xfrak X \to X\times_\Xfrak  \cdots \times_\Xfrak X$$ are also smooth of relative dimension $d$, so that each $p_n^*[d]$ are t-exact for the perverse t-structure. As \cref{defDesc} can be done with the shifts $p_n^*[d]$, this creates a t-structure on the limit of the $\check{\text{C}}$ech nerve of $\pi$ such that $\pi^*$ is t-exact. It does not depend on $\pi$ as one can see by taking another presentation $p:Y\to\Xfrak$ and then $Y\times_\Xfrak X\to \Xfrak$. 
  The t-structure obviously restricts to $\Dd^b_{\mathrm{H},c}(\Xfrak)$. For the right completeness, we want to show that the natural functor 
  $$\Dd_{\mathrm{H}}(\Xfrak)\to \lim\Big(\cdots \xrightarrow{\tau^{\geqslant 1}} \Dd_{\mathrm{H}}^{\geqslant 1}(\Xfrak) \xrightarrow{\tau^{\geqslant 2}}\Dd_{\mathrm{H}}^{\geqslant 2}(\Xfrak)\xrightarrow{\tau^{\geqslant 3}}\Dd_{\mathrm{H}}^{\geqslant 3}(\Xfrak)\xrightarrow{\tau^{\geqslant 4}}\cdots\Big)$$ is an equivalence. As (shifts of) pullbacks by smooth maps are t-exact, this can be checked locally on $\Xfrak$, hence holds.
\end{proof}

\begin{defi}
  Let $\Xfrak$ be an Artin stack locally of finite type  over $\C$. The $\infty$-category of cohomologically constructible mixed Hodge modules over $\Xfrak$ is 
  $$\Dd_{\mathrm{H},c}(\Xfrak) := \{K\in\Dd_{\mathrm{H}}(\Xfrak)\mid\forall n\in \Z, \HH^n(K)\in\Dd^b_{\mathrm{H},c}(\Xfrak)\}.$$
\end{defi}
By definition the t-structure restricts to $\Dd_{\mathrm{H},c}(\Xfrak)$, and the heart is the same as the heart of $\Dd^b_{\mathrm{H},c}(\Xfrak)$.

For a finite type $\C$-scheme $X$, the stable $\infty$-category $\Dd^b(\mathrm{MHM}(X))$ also have a \emph{standard} t-structure (the t-structure for which the functor $\D^b(\mathrm{MHM}(X))\to\D^b_c(X^\mathrm{an},\Q)$ is t-exact when the target is endowed with the t-structure whose heart is the abelian category of constructible sheaves), and in \cite{SwannRealisation} (where the t-structure had the unfortunate name `constructible') we proved that it is the derived category of the constructible heart. In fact, all the considerations above about the perverse t-structure are true for the constructible t-structure. We mention them because the standard t-structure will happen to be handy when proving the constructibility of the operations.

\begin{prop}
  Let $\mathfrak{X}$ be an Artin stack over $\C$. The $\infty$-category $\Dd_{\mathrm{H}}(\mathfrak{X})$ admits a standard t-structure such that for any presentation $\pi:X\to\mathfrak{X}$ the conservative functor $\pi^*$ is t-exact if we endow $\Dd_{\mathrm{H}}(X)$ with the t-structure induced by the canonical t-structure on $\Dd^b(\mathrm{MHM}_\mathrm{std})$. Moreover, the t-structure is right complete and compatible with filtered colimits. It restricts to a t-structure on $\Dd^b_{\mathrm{H},c}(-)$. All pullbacks by all morphisms of Artin stacks are t-exact for this t-structure.
\end{prop}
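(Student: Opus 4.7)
The plan is to mimic the proof of \Cref{tstructureperv} essentially verbatim, exploiting the crucial improvement over the perverse case: on a finite type $\C$-scheme, the pullback $f^*$ along \emph{any} morphism is t-exact for the ordinary t-structure, with no shift needed. This is immediate from the pointwise characterisation of the ordinary t-structure recalled in \Cref{recollections}, since for $f\colon Y\to X$ and $y\in Y$ with image $x$, the composite $y^*f^*$ agrees with $x^*$ after a residue field extension, which is itself t-exact on $\Dd^b(\mathrm{MHS}^p_\C)$.

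First I would fix a smooth presentation $\pi\colon X\to\Xfrak$ and form its Čech nerve. All face maps are smooth morphisms of schemes, hence t-exact for the ordinary t-structure without any shift; the levelwise t-structures therefore assemble directly in the limit diagram \cref{defDesc}, producing an ordinary t-structure on $\Dd_H(\Xfrak)$ for which $\pi^*$ is conservative and t-exact. Independence of the presentation is checked by comparing two atlases through their fibre product over $\Xfrak$, exactly as in \Cref{tstructureperv}. Conservativity of $\pi^*$ yields left separatedness, while right completeness and compatibility with filtered colimits are smooth-local and hence inherited from the scheme case; the restriction to $\Dd_H^b(\Xfrak)$ is tautological because $\pi^*$ preserves cohomological amplitude.

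The only genuinely new point compared to \Cref{tstructureperv} is the t-exactness of $f^*$ along an arbitrary morphism $f\colon\Yfrak\to\Xfrak$. Here I would choose the atlases compatibly: starting from a presentation $\pi_X\colon X\to\Xfrak$, form the base change $X\times_\Xfrak\Yfrak\to\Yfrak$ and pick a scheme atlas $Y\to X\times_\Xfrak\Yfrak$. The composition $\pi_Y\colon Y\to\Yfrak$ is then a smooth surjection from a scheme, and the other projection yields a morphism of schemes $g\colon Y\to X$ satisfying $f\circ\pi_Y = \pi_X\circ g$, so that $\pi_Y^*f^*\simeq g^*\pi_X^*$. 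Since $g^*$ is ordinary t-exact on schemes, the characterisation of the t-structure on $\Dd_H(\Yfrak)$ via the conservative t-exact functor $\pi_Y^*$ forces $f^*$ itself to be t-exact.

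I do not anticipate a substantial obstacle. The only piece of bookkeeping that warrants some care is the verification that the levelwise ordinary t-structures actually assemble into an accessible, presentable t-structure on the descent limit; this is standard given that each $\mathrm{MHM}_\mathrm{ord}(X^{n/\Xfrak})$ is Grothendieck abelian and the transition functors are t-exact and colimit preserving, so that \cite[Lemma C.2.4.3]{lurieSpectralAlgebraicGeometry} applies levelwise and the t-structure is preserved by the limit.
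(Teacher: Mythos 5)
Your proposal is correct and follows the paper's own route: the paper's proof is literally ``the same as \Cref{tstructureperv}, except that we do not need to shift the pullback functors,'' which is exactly the argument you give. Your additional explicit verification of t-exactness of $f^*$ for an arbitrary morphism of stacks (via compatible atlases and $\pi_Y^*f^*\simeq g^*\pi_X^*$) is a correct spelling-out of what the paper leaves implicit.
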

\begin{proof}
  The proof is exactly the same as for \Cref{tstructureperv}, except that we do not need to shift the pullback functors.
\end{proof}

We will denote by $\HHh^n$ the cohomology objects for the standard t-structure.

\begin{prop}
  Let $\Xfrak$ be an Artin stack locally of finite type over $\C$ and let $K\in\Dd_{\mathrm{H}}(\Xfrak)$. Then $K\in \Dd_{\mathrm{H},c}(\Xfrak)$ if and only if for all $n\in\Z$ the object $\HHh^n(K)$ is in $\Dd^b_{\mathrm{H},c}(\Xfrak)$.
\end{prop}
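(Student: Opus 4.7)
The plan is to reduce to the case of a scheme via a smooth presentation, and then exploit the standard comparison between the perverse and ordinary $t$-structures via a double truncation.

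First I would fix a smooth presentation $\pi\colon X\to\Xfrak$ of some relative dimension $d_\pi$. Because $\Dd_H^b$ is the right Kan extension from schemes along the Čech nerve of $\pi$, and every projection in this nerve is smooth (hence preserves $\Dd_H^b$), an object $M\in\Dd_H(\Xfrak)$ lies in $\Dd_H^b(\Xfrak)$ if and only if $\pi^*M\in\Dd_H^b(X)$. Combined with the conservativity of $\pi^*$ and its $t$-exactness for the ordinary $t$-structure (resp.\ up to the shift by $d_\pi$ for the perverse one), the statement on $\Xfrak$ reduces to the analogous statement on the scheme $X$: for $L\in\Dd_H(X)$, having $\HH^n(L)\in\Dd_H^b(X)$ for all $n$ is equivalent to having $\HHh^n(L)\in\Dd_H^b(X)$ for all $n$.

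On a finite type $\C$-scheme $X$ of dimension $d$, the key input is a pair of inclusions comparing the two $t$-structures: an object of $\Dd_H(X)$ whose perverse cohomology is concentrated in non-positive degrees has ordinary cohomology concentrated in non-positive degrees, while one whose perverse cohomology is concentrated in non-negative degrees has ordinary cohomology concentrated in degrees $\geqslant -d$. This is classical for constructible sheaves and carries over verbatim to mixed Hodge modules through their stratum-wise characterisation. Now given $L\in\Dd_H(X)$ with $\HH^n(L)\in\mathrm{MHM}(X)$ compact for all $n$, fix $k\in\Z$ and set $L_k:=\taup_{\leqslant k+d}\taup_{\geqslant k-d}L$. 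The successive fibre sequences $\taup_{\geqslant k-d}L\to L\to\taup_{\leqslant k-d-1}L$ and $L_k\to\taup_{\geqslant k-d}L\to\taup_{\geqslant k+d+1}\taup_{\geqslant k-d}L$ have cofibres lying respectively in perverse degrees $\leqslant k-d-1$ (hence in ordinary degrees $\leqslant k-d-1$) and in perverse degrees $\geqslant k+d+1$ (hence in ordinary degrees $\geqslant k+1$). Both cofibres therefore have vanishing $\HHh^{k-1}$ and $\HHh^{k}$, so the long exact sequences of ordinary cohomology identify $\HHh^k(L)$ with $\HHh^k(L_k)$. Since $L_k$ has only finitely many (compact) perverse cohomologies, it lies in $\Dd_H^b(X)=\Dd^b(\mathrm{MHM}(X))$, and hence $\HHh^k(L_k)\in\mathrm{MHM}_{\mathrm{ord}}(X)\subset\Dd_H^b(X)$, as required. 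The converse implication is entirely symmetric.

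The main obstacle is really only bookkeeping: one must verify the two $t$-structure inclusions above with the correct uniform shift $d=\dim X$ for mixed Hodge modules. Once they are in place—which follows from Saito's characterisation of the perverse and ordinary $t$-structures via point-pullbacks, exactly as in BBD for constructible sheaves—the truncation argument on a scheme is formal, and the reduction via the smooth cover is purely technical.
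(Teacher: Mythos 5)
Your proof is correct and follows essentially the same route as the paper: reduce to a scheme via a smooth presentation (both conditions being local, since $\pi^*$ is conservative and t-exact for the ordinary, resp.\ shifted perverse, t-structures), then use the standard comparison inclusions between the perverse and ordinary t-structures to replace $K$ by a bounded double truncation, where the statement is trivial. Apart from a harmless indexing slip (the first truncation triangle should read $\taup_{\leqslant k-d-1}L\to L\to\taup_{\geqslant k-d}L$, and for that piece the relevant vanishing is of $\HHh^{k}$ and $\HHh^{k+1}$, which does hold), your argument matches the paper's, which records the same truncation-interaction facts more tersely.
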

\begin{proof}
  Both conditions on $K$ are local on $\Xfrak$, hence we can assume that $\Xfrak$ is a finite type $\C$-scheme $X$. If $K$ is bounded the result is trivial, and if not, we can reduce to the bounded case by noting the following: $\tau^{p\leqslant n}$ preserves $\Dd_{\mathrm{H}}(X)^{\mathrm{std}\leqslant n}$, 
  $\tau^{\mathrm{std}\geqslant n}$ preserves  $\Dd_{\mathrm{H}}(X)^{p\geqslant 0}$, 
  the restriction of $\tau^{p\geqslant n}$ to $\Dd_{\mathrm{H}}(X)^{\mathrm{std}\geqslant n}$ is the identity and the restriction of  $\tau^{\mathrm{std}\leqslant n}$ to $\Dd_{\mathrm{H}}(X)^{p\leqslant n}$ is the identity. Here $\tau^{\mathrm{std}}$ and $\tau^{p}$ are the truncation functors for the standard and perverse t-structures, respectively.
\end{proof}

\begin{thm}[Liu-Zheng]
  \label{thm_cosntr}
  Let $f:\Yfrak\to\Xfrak$ be a morphism of Artin stacks locally of finite type over $\C$. Then we have the following:
  \begin{enumerate}
    \item The $\infty$-category $\Dd_{\mathrm{H},c}^-(\Xfrak)$ is stable under tensor products, and $f^*$ restricts to a functor 
      $$f^*\colon \Dd_{\mathrm{H},c}(\Xfrak)\to\Dd_{\mathrm{H},c}(\Yfrak).$$
    \item The functor $f_*$ restricts to a functor 
      $$f_*\colon \Dd_{\mathrm{H},c}^+(\Yfrak)\to \Dd_{\mathrm{H},c}^+(\Yfrak) $$ and even to 
      $$f_*\colon \Dd_{\mathrm{H},c}(\Yfrak)\to \Dd_{\mathrm{H},c}(\Yfrak) $$ if $f$ is representable by algebraic spaces.
    \item The functor $f_!$ restricts to a functor 
    $$f_!\colon \Dd_{\mathrm{H},c}^-(\Yfrak)\to \Dd_{\mathrm{H},c}^-(\Yfrak) $$ and even to 
      $$f_!\colon \Dd_{\mathrm{H},c}(\Yfrak)\to \Dd_{\mathrm{H},c}(\Yfrak) $$ if $f$ is representable by algebraic spaces.
    \item The functor $f^!$ restricts to a functor 
    $$f^!\colon \Dd_{\mathrm{H},c}(\Xfrak)\to\Dd_{\mathrm{H},c}(\Yfrak).$$
    \item The internal Hom functor restricts to a functor 
    $$\Dd_{\mathrm{H},c}^-(\Xfrak)^\op\times \Dd_{\mathrm{H},c}^+(\Xfrak)\to\Dd_{\mathrm{H},c}^+(\Xfrak).$$
  \end{enumerate}
\end{thm}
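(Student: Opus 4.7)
The plan is to verify each claim by reduction to the scheme case via smooth hyperdescent, following the Liu--Zheng gluing pattern used in \Cref{opera}. The scheme-level input is Saito's constructibility for $\mathrm{MHM}(X)$, combined with \Cref{tstructureperv}, which tells us that membership in $\Dd_{H,c}(\Xfrak)$ can be tested smooth-locally.

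For (1), (4) and (5), fix a smooth presentation $\pi\colon X\to\Xfrak$ of relative dimension $d$, so that $\pi^*$ is conservative and t-exact up to the shift by $d$ for the perverse t-structure (and strictly t-exact for the ordinary t-structure). Smooth base change reduces $f^*$ and $\otimes$ to the scheme case, and $\sHom$ is handled similarly using that $\pi^*$ commutes with $\sHom$ against compact arguments. For $f^!$, forming the cartesian square with $\pi'\colon\Yfrak':=\Yfrak\times_\Xfrak X\to\Yfrak$ and $f'\colon\Yfrak'\to X$, the purity isomorphism of item 7 in \Cref{opera} applied to $\pi$ and $\pi'$ yields smooth base change $\pi'^*f^!\simeq(f')^!\pi^*$; iterating with a presentation of $\Yfrak'$ reduces to a map of schemes.

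For (2) and (3) in the representable case, the same strategy applies: if $f$ is representable by algebraic spaces, then $f'$ above is a map of algebraic spaces, so smooth base change $\pi^*f_*\simeq f'_*\pi'^*$ (and the analogous identity for $f_!$), followed by smooth descent along a presentation of $\Yfrak'$, reduces to a map of schemes. For the non-representable case of (2), pick a smooth presentation $p\colon Y\to\Yfrak$ of relative dimension $d$. Smooth hyperdescent along the \v{C}ech nerve $p_n\colon Y^{(n+1)/\Yfrak}\to\Yfrak$ gives
$$f_*K\simeq \lim_{[n]\in\Delta}(f\circ p_n)_*p_n^*K$$
for $K\in\Dd_H^+(\Yfrak)$. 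Each composite $f\circ p_n$ is representable by schemes, so the representable case shows every term of the cosimplicial limit lies in $\Dd_{H,c}^+(\Xfrak)$. The key finiteness input is that $p_n^*[(n+1)d]$ is t-exact for the perverse t-structure, so in any fixed target perverse degree only finitely many $n$ contribute to the limit, whence $f_*K\in\Dd_{H,c}^+(\Xfrak)$. The case of $f_!$ with $K\in\Dd_{H,c}^-(\Yfrak)$ is dual, via the colimit description of $f_!$ over a presentation of $\Yfrak$.

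The main obstacle is precisely this amplitude bookkeeping in the non-representable case: $f_*\Q$ can easily be unbounded above (as for $f\colon BG\to\Spec\C$ with $G$ a connected affine algebraic group), which is exactly why full constructibility in (2) and (3) requires $f$ to be representable by algebraic spaces. All other inputs are formal consequences of \Cref{opera} and the smooth hyperdescent of $\Dd_H$.
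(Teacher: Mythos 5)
Your reductions for (1), (4), (5) and for the representable cases of (2)--(3) are fine and are essentially the paper's (very terse) treatment; your architecture for the non-representable case --- reduce to $\Xfrak$ a scheme, descend along a presentation of the source, and control the resulting (co)simplicial diagram degree by degree --- is also the paper's, which works with the $!$-descent colimit $K\simeq\colim_{\Delta^{\op}}(p_n)_!p_n^!K$ and the associated spectral sequence rather than with your $*$-descent limit.

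The problem is the justification of the key finiteness step. You claim that since $p_n^*[(n+1)d]$ is perverse t-exact, ``in any fixed target perverse degree only finitely many $n$ contribute to the limit''. This does not follow: the coconnectivity gained by $p_n^*$ is exactly cancelled by the perverse amplitude of $(f\circ p_n)_*$, because the fibre dimension of $f\circ p_n$ also grows like $(n+1)d$. So the terms $(f\circ p_n)_*p_n^*K$ are only \emph{uniformly} bounded below, not increasingly coconnective; concretely, for $f\colon B\Gm\to\Spec\C$ and $K=\Q$ the $n$-th term of your cosimplicial limit is $\mathrm{R}\Gamma(\Gm^n,\Q)$, which is nonzero in degree $0$ for every $n$. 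The finiteness you want is true, but for a different reason: one needs (i) a \emph{uniform} one-sided amplitude bound on the terms --- immediate if you run the bookkeeping in the ordinary t-structure, where all $*$-pullbacks are t-exact and $*$-pushforwards along schemes are left t-exact, and constructibility may be tested by $\HHh^n$ thanks to the proposition preceding the theorem (this is exactly why the paper phrases its proof with $\HHh$) --- and (ii) the degree shift in the Tot/skeletal filtration, by which the $n$-th cosimplicial level only contributes to $\HH^m$ of the limit through its cohomology in degree $m-n$, so that for fixed $m$ only levels $n\leqslant m-a$ matter. One must then also argue convergence, i.e.\ that $\HH^m(f_*K)$ agrees with $\HH^m$ of a finite partial totalization (using that the coconnective part is stable under limits), so that it is a finite extension of constructible subquotients. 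As written, your proposal asserts the conclusion without either the uniform amplitude bound or this stabilization argument; the same correction is needed for your dual treatment of $f_!$ via the $!$-descent colimit (there the uniform bound is an upper bound, automatic in the ordinary t-structure since $(p_n)_!p_n^!\simeq(p_n)_\sharp p_n^*$), which is precisely the paper's spectral-sequence argument.
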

\begin{proof}
  The proof is essentially the same as the proofs of \cite[6.4.4 and 6.4.5]{liuEnhancedSixOperations2017}. The point 1. is easy. We show how to obtain 2. by hand, for example:
  Take $\pi:X\to\Xfrak$ a presentation of $\Xfrak$ and form the pullback 
  $$
  \begin{tikzcd}
      \mathfrak{Z} \arrow[r,"g"] \ar[d,"q"]
          \arrow[dr, phantom, very near start, "{ \lrcorner }"]
        & X \arrow[d,"\pi"] \\
      \Yfrak \arrow[r,"f"]
        & \Xfrak
  \end{tikzcd}
  .$$ Then to show that each $\HHh^n(f_!K)$ are constructible, we may check it locally, whence as $\pi^*$ is t-exact it suffices to show that $\HHh^n(\pi^*f_!K)$ is constructible. By base change, it suffices to show that $\HHh^n(g_!q^*K)$ is constructible: we reduced to the case where $\Xfrak$ is a scheme. Now, choose a presentation $r:Z\to \Zfrak$ of $\Zfrak$. Descent for $!$-functoriality implies that $q^*K = \colim_\Delta (g_n)_!r_n^!q^*K$ where $(g_n)$ is the composition of the $\check{\text{C}}$ech  nerve $(r_n)$ of $r$ with $g$.   
  The spectral sequence induced by this geometric realisation reads 
  $$E_1^{p,q}=\HHh^{-q}((g_p)_!r_p^!q^*K)\Rightarrow \HHh^{p-q}(g_!q^*K).$$
  Now the each $r_p^!q^*K$ is cohomologically constructible by purity (7. of \Cref{opera}) and each $g_p$ is a morphism of algebraic spaces,  of relative dimension the sum of the relative dimensions of $g$ and of $r$. In particular, all $g_p$ have the same cohomological amplitude thus our spectral sequence is concentrated in a shifted quadrant: it vanishes if $p<0$ and if $q$ is smaller than some bound depending on the cohomological amplitude of $g_!$ and of $K$. This implies that the spectral sequence converges and we have that $\HH^n(g_!q^*K)$ is constructible for all $n$. In the case $f$ is representable by algebraic spaces, we can reduce to the case of schemes where it follows from finite cohomological amplitude. The case of $f_*$ is similar.
\end{proof}

\begin{rem}
  We may have to use the same extension to stack for others systems of coefficients such as analytic sheaves or étale motives. Because the functor between those and mixed Hodge modules commute with all the operations on schemes, the extension to stacks will commute with $\otimes$, $*$-pullbacks, $!$-pushforwards (by definition) and $\sHom(-,-)$, $*$-pushforwards and $!$-pullbacks for representable morphisms, by smooth base change so that it suffices to check this on an atlas (see \Cref{propHB}). This also applies to the Hodge realisation functor $\DM\to\D_\mathrm{H}$, and thus implies the following corollary.
\end{rem}

\begin{cor}
  Let $X$ be an complex algebraic variety and let $G$ be an algebraic group acting on $X$. Then if $X$ is smooth there exists a cycle class map 
  $$\mathrm{CH}^i_G(X)\to \mathrm{Hom}_{\Dd_\mathrm{H}(X/G)}(\Q,\Q(i)[2i])$$
  functorial in $X$, where the left hand side is the equivariant Chow group constructed by Edidin and Graham in \cite{MR1614555}. For a possibly singular algebraic one has to use Borel-Moore homology and we obtain a cycle class 
  \[\mathrm{CH}_i(X/G)\to \mathrm{Hom}_{\Dd_\mathrm{H}(X/G)}(\Q(i)[2i],\pi_{[X/G]}^!\Q(0))\] where $\pi_{[X/G]}\colon[X/G]\to\Spec(\C)$ is the structural morphism and the left-hand side is defined as a by Edidin and Graham in \cite[Section 5.3]{MR1614555}.
\end{cor}
\begin{proof}
  Once the left-hand side has been identified with a $\Hom$-group in $\mathrm{DM}$ by \cite[Example 12.17]{khanEquivariantGeneralizedCohomology2024} (or \cite[Corollary 6.5]{khanEquivariantGeneralizedCohomology2024} for the singular case), this is a consequence of the functoriality of the realisation functor.
\end{proof}

\subsubsection{Duality}
We want a working Verdier duality on algebraic stacks locally of finite type over the complex numbers. 

\begin{defi}
Let $\Xfrak$ be an algebraic stack locally of finite type over the complex numbers. The dualising object on $\Xfrak$ is the object $\omega_\Xfrak:=\pi_{\Xfrak}^!\Q(0)\in\Dd_{\mathrm{H}}(\Xfrak)$ with $\pi_{\Xfrak}\colon \Xfrak\to\Spec \C$. The Verdier duality functor is $$\mathbb{D}_{\Xfrak}:=\sHom(-,\omega_{\Xfrak}).$$
\end{defi}

If follows directly from the projection formula between $g_!$ and $g^*$, for $g$ a morphism of algebraic stacks, that there are natural equivalences 
$$g^!\circ \mathbb{D}_{\Xfrak}\simeq \mathbb{D}_{\Yfrak}\circ g^*$$ and 
$$g_*\circ\mathbb{D}_{\Yfrak}\simeq\mathbb{D}_{\Xfrak}\circ g_!$$
for any morphism $g:\Yfrak\to \Xfrak$ between algebraic stacks locally of finite type over $\C$.

\begin{lem}
  \label{Ddualliss}
  Let $g:\Yfrak\to\Xfrak$ be a \emph{smooth} morphism between algebraic stacks locally of finite type over $\C$. Then there is also a natural equivalence 
  $$g^*\circ \mathbb{D}_{\Xfrak}\simeq \mathbb{D}_{\Yfrak}\circ g^!$$ of functors 
  $\Dd_{\mathrm{H}}(\Xfrak)\to\Dd_{\mathrm{H}}(\Yfrak)$.
\end{lem}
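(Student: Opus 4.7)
The plan is to bootstrap from the already-recorded equivalence $g^!\circ\mathbb{D}_\Xfrak\simeq\mathbb{D}_\Yfrak\circ g^*$ by invoking the purity isomorphism supplied by item (7) of \Cref{opera}. Since the statement is local on $\Yfrak$, we may assume that $g$ has constant relative dimension $d$. The coefficient system of rational mixed Hodge modules is orientable, so the Thom endofunctor $\Sigma^{\lcal_{\Yfrak/\Xfrak}}$ is canonically the shift-and-twist $(d)[2d]$, and purity takes the concrete form $g^!\simeq g^*(d)[2d]$.

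The second ingredient is that Verdier duality inverts Tate twists and shifts: the natural equivalence $\mathbb{D}_\Yfrak\bigl(K(d)[2d]\bigr)\simeq(\mathbb{D}_\Yfrak K)(-d)[-2d]$ follows formally from the fact that $\Q(d)[2d]$ is $\otimes$-invertible and $\mathbb{D}_\Yfrak=\sHom(-,\omega_\Yfrak)$. Combining these two observations produces the chain
\begin{align*}
  \mathbb{D}_\Yfrak\circ g^!
   &\simeq \mathbb{D}_\Yfrak\circ\bigl(g^*(d)[2d]\bigr)
    \simeq \bigl(\mathbb{D}_\Yfrak\circ g^*\bigr)(-d)[-2d]\\
   &\simeq \bigl(g^!\circ\mathbb{D}_\Xfrak\bigr)(-d)[-2d]
    \simeq \bigl(g^*(d)[2d]\circ\mathbb{D}_\Xfrak\bigr)(-d)[-2d]
    \simeq g^*\circ\mathbb{D}_\Xfrak,
\end{align*}
where the penultimate step applies purity once more to $g^!$ and the last cancels the twist-shift with its inverse. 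Read in the other direction, this is the required equivalence, and the naturality in $K\in\Dd_H(\Xfrak)$ is inherited from the naturality of purity and of the equivalence $g^!\mathbb{D}_\Xfrak\simeq\mathbb{D}_\Yfrak g^*$.

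The main obstacle is essentially bookkeeping: one must be sure that the purity isomorphism, interpreted as a Tate twist and shift, is natural enough globally on $\Yfrak$ for the above chain to make sense, and that the commutation of $\mathbb{D}_\Yfrak$ with twist-and-shift is compatible with how we obtained the identity $g^!\mathbb{D}_\Xfrak\simeq\mathbb{D}_\Yfrak g^*$ from the projection formula. Both points reduce, via the descent definition of $\Dd_H$ on stacks from \cref{defDesc}, to the scheme level, where orientability and the standard compatibilities in the six-functor formalism on $\DM$ (and hence on $\Dd_H$ via $\rho_H$) make them formal.
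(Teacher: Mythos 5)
Your argument is essentially the paper's own proof: reduce locally to constant relative dimension, use purity/orientability to identify $g^!$ with $g^*(d)[2d]$, and then deduce the claim from the already-established equivalence $g^!\circ\mathbb{D}_\Xfrak\simeq\mathbb{D}_\Yfrak\circ g^*$ via the compatibility of Verdier duality with Tate twists and shifts. The chain of equivalences you write is correct and matches the paper's reasoning.
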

\begin{proof}
  The smooth projection formula implies that the canonical map 
  \[g^*\sHom(-,\omega_X)\to \sHom(g^*(-),g^*\omega_\Xfrak)\] is an equivalence. Now, using the purity isomorphism twice $g^*\simeq g^!(-d)[-2d]$ we see that the right-hand side is equivalent to $\sHom(g^!(-),g^!\omega_\Xfrak)$, and because $g^!\omega_\Xfrak\simeq \omega_\Yfrak$, the proof is finished.
\end{proof}
\begin{prop}
  \label{dualityDBc}
  Verdier duality is an anti auto-equivalence when restricted to $\Dd_{H}^b(\Xfrak)$. It swaps $!$ and $*$ when they preserve $\Dd^b_{\mathrm{H},c}$ and it is perverse t-exact.
\end{prop}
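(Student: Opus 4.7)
The plan is to reduce each of the three assertions to the corresponding statement for finite type $\C$-schemes (where they are due to M.~Saito) by smooth descent along a presentation $\pi\colon X\to\Xfrak$, taken smooth surjective of some fixed relative dimension $d$ after splitting $\Xfrak$ into connected components as in the proof of \Cref{tstructureperv}. The two intertwining equivalences I will use repeatedly are $\pi^*\circ\mathbb{D}_\Xfrak\simeq\mathbb{D}_X\circ\pi^!$ (\Cref{Ddualliss}, requiring the smoothness of $\pi$) and the general $\pi^!\circ\mathbb{D}_\Xfrak\simeq\mathbb{D}_X\circ\pi^*$ displayed just before that lemma.

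For the anti auto-equivalence statement I will first check preservation of $\Dd_H^b(\Xfrak)$ and then verify biduality. Boundedness is immediate: $K\in\Dd_H^b(\Xfrak)$ iff $\pi^*K\in\Dd_H^b(X)$ by the limit formula \cref{defDesc}, and then
\[\pi^*\mathbb{D}_\Xfrak K\simeq\mathbb{D}_X\pi^!K\simeq\mathbb{D}_X\bigl(\pi^*K(d)[2d]\bigr)\]
lies in $\Dd_H^b(X)$ by Saito's result. For the biduality unit $\eta\colon K\to\mathbb{D}_\Xfrak^2 K$, composing the two intertwining identities yields a canonical equivalence $\pi^*\mathbb{D}_\Xfrak^2 K\simeq \mathbb{D}_X\pi^!\mathbb{D}_\Xfrak K\simeq \mathbb{D}_X^2\pi^*K$, under which $\pi^*(\eta)$ is the scheme-level biduality unit for $\pi^*K$, an equivalence by Saito; conservativity of $\pi^*$ then forces $\eta$ itself to be an equivalence on $\Dd_H^b(\Xfrak)$.

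The swap of $!$ and $*$ is then essentially formal. Applying $\mathbb{D}_\Xfrak$ and $\mathbb{D}_\Yfrak$ to the always-valid identity $g_*\circ\mathbb{D}_\Yfrak\simeq\mathbb{D}_\Xfrak\circ g_!$ and using biduality on $\Dd_H^b$ gives $\mathbb{D}_\Xfrak\circ g_*\circ\mathbb{D}_\Yfrak\simeq g_!$ wherever $g_*$ preserves $\Dd_H^b$; the analogous manipulation exchanges $g^*$ and $g^!$. For perverse t-exactness, I test again on the presentation: if $K\in{}^p\Dd_H^{\leqslant 0}(\Xfrak)$, then by \Cref{tstructureperv} $\pi^*K\in{}^p\Dd_H^{\leqslant d}(X)$, so $\pi^!K=\pi^*K(d)[2d]\in{}^p\Dd_H^{\leqslant -d}(X)$, and Saito's perverse t-exactness of $\mathbb{D}_X$ yields $\pi^*\mathbb{D}_\Xfrak K\simeq\mathbb{D}_X\pi^!K\in{}^p\Dd_H^{\geqslant d}(X)$, i.e.\ $\mathbb{D}_\Xfrak K\in{}^p\Dd_H^{\geqslant 0}(\Xfrak)$. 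The reverse inclusion is symmetric.

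The one delicate point I expect is the verification that under the composite identification $\pi^*\mathbb{D}_\Xfrak^2\simeq\mathbb{D}_X^2\pi^*$ the pullback of the stacky biduality unit really matches the scheme-level one. This is a mate-calculus check internal to the six-functor formalism of Liu--Zheng, entirely parallel to their $\ell$-adic treatment, and introduces no new geometric content.
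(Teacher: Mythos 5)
Your proposal is correct and follows essentially the same route as the paper: check preservation of $\Dd_H^b$ locally, verify the biduality unit after pulling back along a presentation using \Cref{Ddualliss} together with conservativity of $\pi^*$, and deduce the exchange of $!$ and $*$ and perverse t-exactness from autoduality. The paper leaves these last two points as "follows from the autoduality," so your explicit shift-counting for t-exactness and the mate-compatibility caveat for the biduality unit are just finer-grained versions of the same argument, not a different method.
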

\begin{proof}
  First note that duality preserves $\Dd^b_{\mathrm{H},c}$ because this can be checked locally.
  There is a canonical map $$\mathrm{Id}\to \mathbb{D}_{\Xfrak}\circ\mathbb{D}_{\Xfrak}.$$ For a given $M\in\Dd^b_{\mathrm{H},c}(\Xfrak)$ and a presentation $g:\Xfrak\to X$, it suffices to check that the map 
  $$g^*M\to g^*(\mathbb{D}_{\Xfrak}\circ\mathbb{D}_{\Xfrak}(M))$$  is an equivalence. But now by \Cref{Ddualliss} the above map is an equivalence because it is the map $$g^*M\to \mathbb{D}_X\circ\mathbb{D}_X(g^*M).$$ The remaining assertions follow from the autoduality over schemes (\cite[Theorem 0.1]{MR1047415}).
\end{proof}
\begin{cor}
  Verdier duality in fact extends to an equivalence of $\Dd_{\mathrm{H},c}^+(\Xfrak)\xrightarrow{\sim}\Dd_{\mathrm{H},c}^-(\Xfrak)^\op$. It swaps $!$ and $*$.
\end{cor}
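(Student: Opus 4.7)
The goal is to bootstrap from the preceding proposition, where $\mathbb{D}_\Xfrak$ is an anti-autoequivalence of $\Dd_H^b(\Xfrak)$ interchanging $!$ and $*$, to the larger category $\Dd_{H,c}(\Xfrak)$. The plan has three steps: preservation of $\Dd_{H,c}$, biduality, and the swapping of the four operations; all three will be deduced from the completeness properties of the perverse t-structure (\Cref{tstructureperv}) and the already treated bounded case.

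For preservation, fix $K\in\Dd_{H,c}(\Xfrak)$. I would first establish that for every $n\in\Z$ there is a canonical isomorphism $\HH^n(\mathbb{D}_\Xfrak K)\simeq \mathbb{D}_\Xfrak(\HH^{-n}(K))$. This is the standard consequence of the contravariant perverse t-exactness of $\mathbb{D}_\Xfrak$ supplied by the previous proposition: applying $\mathbb{D}_\Xfrak$ to the truncation triangles $\tau^{<-N} K \to K \to \tau^{\geq -N} K$ and comparing perverse cohomologies one is reduced to the bounded case, where the identification is classical. Since $\HH^{-n}(K)\in\Dd_H^b(\Xfrak)$ by definition of $\Dd_{H,c}$, the previous proposition gives $\mathbb{D}_\Xfrak(\HH^{-n}(K))\in\Dd_H^b(\Xfrak)$, whence $\mathbb{D}_\Xfrak K\in\Dd_{H,c}(\Xfrak)$.

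For biduality, the composition $\mathbb{D}_\Xfrak^2$ is covariantly t-exact and preserves $\Dd_{H,c}(\Xfrak)$ by the previous step. The natural transformation $\mathrm{Id}\to\mathbb{D}_\Xfrak^2$ induces on each perverse cohomology the biduality map on $\HH^n(K)\in\Dd_H^b(\Xfrak)$, which is an isomorphism by the preceding proposition. Because the perverse t-structure on $\Dd_H(\Xfrak)$ is left separated and right complete, a morphism that is an isomorphism on every $\HH^n$ is itself an isomorphism, so $K\to \mathbb{D}_\Xfrak^2 K$ is an equivalence.

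The swapping of the operations is then formal: the two natural equivalences $g^!\circ\mathbb{D}_\Xfrak\simeq \mathbb{D}_\Yfrak\circ g^*$ and $g_*\circ\mathbb{D}_\Yfrak\simeq\mathbb{D}_\Xfrak\circ g_!$ recalled just before \Cref{Ddualliss} hold on all of $\Dd_H$, while the Liu--Zheng constructibility theorem ensures that $g^*,g^!,g_*,g_!$ all preserve $\Dd_{H,c}$, so the equivalences restrict to $\Dd_{H,c}$. The main obstacle in this plan is the identification $\HH^n(\mathbb{D}_\Xfrak K)\simeq\mathbb{D}_\Xfrak(\HH^{-n}K)$ in the unbounded case; once this is in hand via the truncation argument, the remaining assertions require no new input beyond the preceding proposition and the good behaviour of the perverse t-structure.
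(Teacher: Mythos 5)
Your argument is essentially the paper's own proof, spelled out in more detail: the paper simply observes that $\mathbb{D}_{\Xfrak}$ is perverse t-exact and an isomorphism on the heart (which is shared by $\Dd_{H,c}(\Xfrak)$ and $\Dd^b_H(\Xfrak)$), which is exactly what your cohomology-wise identification $\HH^n(\mathbb{D}_\Xfrak K)\simeq\mathbb{D}_\Xfrak(\HH^{-n}K)$, the biduality check on perverse cohomologies using separatedness of the t-structure, and the restriction of the $!/*$-swap identities make explicit. No gap beyond what the paper itself leaves implicit.
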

\begin{proof}
  By the previous assertion, the functor $\mathbb{D}_{\Xfrak}$ is perverse t-exact, and an isomorphism on the heart. As the perverse t-structure is left separated when restricted to $\Dd_{\mathrm{H},c}^+(\Xfrak)$, this implies the claim.
\end{proof}

We now give an amelioration of \Cref{opera}(4) whose proof in the $\ell$-adic setting is due to Olsson \cite[Corollary 5.17]{MR3344762}, see also \cite[Theorem A.7]{khanVirtualFundamentalClasses2019}:
\begin{prop}
  \label{better4}
  Let $f:\Yfrak\to\Xfrak$ be a proper morphism represented by Deligne-Mumford stacks. Then the canonical morphism $f_!\to f_*$ is an isomorphism on $\Dd^+_{\mathrm{H},c}(\Yfrak)$. 
\end{prop}
\begin{proof}
  By \Cref{opera} we know the result if $f$ is representable by algebraic spaces. 
  Note that if $f$ is a proper morphism of algebraic stacks, then the formation of $f_*$ is compatible with base change on $\D^+_\mathrm{H,c}$. Indeed, using \cite[Theorem 1.1]{MR2183251} the proof is the same as \cite[Théorème 18.5.1]{MR1771927}: one uses the cohomological descent spectral sequence to reduce to the case of representable morphisms. Thus, by choosing a smooth atlas of the target of $f$, we can reduce to the case where $\Xfrak$ is a scheme and $\Yfrak$ is a Deligne-Mumford stack. Now by \cite[Théorème 16.6]{MR1771927}, there is a finite covering $Y\to \Yfrak$ with $Y$ a scheme, thus \cite[Theorem A.7.]{khanVirtualFundamentalClasses2019} applies.

\end{proof}
\begin{prop}
  \label{propHB}
  All result above about the extension of the operations work for the category $\Dd_B:= \Ind\Dd^b_c((-)^\mathrm{an},\Q)$ of Ind-constructible complexes, the forgetful functor $\mathrm{rat}\colon\Dd_{\mathrm{H}}\to\Dd_B$ extends to schemes, and commutes with the following functors: 
  \begin{enumerate}
    \item $-\otimes-$, $\sHom(-,-)$, and Verdier duality $\mathbb{D}_\Xfrak$.
    \item $f^*$ for any morphism.
    \item $f_!$ for any finite type morphism.
    \item $f_*$ for any proper morphism representable by Deligne-Mumford stacks.
  \end{enumerate}
  When restricted to $\D_\mathrm{H,c}^+$, it also commutes with:
  \begin{enumerate}
    \item Any pushforward $f_*$ for $f$ a finite type morphism.
    \item Any exceptional pullback $f^!$ for $f$ is a finite type morphism.
  \end{enumerate}
\end{prop}
\begin{proof}
    As both functors on stacks are defined using the right Kan extension on the category of correspondences (see the proof of \cite[Proposition A.5.16]{mann2022padic6functorformalismrigidanalytic}), it is clear that the map $\Dd_\mathrm{H}\to\Dd_B$ commutes with tensor products, $*$-pullbacks and $!$-pushforwards. Point 4.  is a direct consequence of 3. and \Cref{better4}.
    For $\sHom(-,-)$ and Verdier duality one can reduce to schemes directly using that $\mathrm{rat}$ commutes with twists and that both functors $\sHom$ and $\mathbb{D}_\mathfrak{X}$ commute with $\pi^*$ for $\pi$ a smooth morphism, up to a twist and a shift (by purity).

  Now on constructibles, Verdier duality is an equivalence an swaps $*$ and $!$'s, thus points $1$ and $2$ follow from $2$ and $3$ of the above list.

\end{proof}
\subsubsection{Weights}
In this subsection we introduce weights for mixed Hodge modules on stacks. We will work only with stack that are of finite type over $\C$ in this section. The abelian category of mixed Hodge modules over a scheme has a functorial and exact \emph{weight filtration}. Using ideas of the PhD of Sophie Morel, for each integer $w\in\Z$ we can define a t-structure ``$\omega\leqslant w$'' on $\D^b_\mathrm{H,c}(X)$, that we call the \emph{weight-t-structure} (\Cref{weight-t-struct}). It has the particularity that both its right and left sides are stable subcategories of $\D^b_\mathrm{H,c}(X)$, so that the heart is zero, but the truncations functors are exact functors. This provides the canonical extension of the weight filtration to the bounded derived category: we obtain exact functors $\tau^{(\omega\leqslant w)\leqslant 0}=:\omega_{\leqslant w}\colon \D^b_\mathrm{H,c}(X)\to \D^b_\mathrm{H,c}(X)$ that are perverse t-exact, and whose restriction to the perverse heart gives back the weight filtration of mixed Hodge modules. By indization we obtain the weight filtration on the Ind-category. For this t-structure, all pullbacks by smooth morphisms are t-exact, so that we can extend the weight filtration to any algebraic stack by descent (\Cref{weight-filtr-stacks}). In particular, the perverse heart of $\D_\mathrm{H}(\Xfrak)$ is endowed with an exact weight filtration. This gives the possibility to define \emph{punctual weights} (we chose this name because this is the correct analogue of punctual weights in the $\ell$-adic setting) in \Cref{defi:weights}.

However, this does not quite exactly gives a weight structure \emph{à la Bondarko}. Indeed, the weight filtration may fail to have sufficiently many orthogonality properties, so that the property that a pure perverse object is semi-simple may not be satisfied (see \Cref{wstructFail}). However, if $\Xfrak$ is an algebraic stack \emph{with affine stabilisers}, then the objects that are of punctual weight $0$ are indeed the heart of a weight structure, as we show in \Cref{wstruct}.

\begin{constr}
\label{weight-t-struct}
  Let $w\in\Z$ and $X$ be a finite type $\C$-scheme. Recall that following \cite{MR2350050} we can construct a weight-t-structure on $\Dd^b_{\mathrm{H},c}(X)$ by setting 
$$\Dd^b_{\mathrm{H},c}(X)^{\omega\leqslant w}:=\{K\in \Dd^b_{\mathrm{H},c}(X)\mid \forall n\in\Z, \forall m>w,\ \mathrm{gr}_m^W(\HHp^n(K))=0\}$$
and 
$$\Dd^b_{\mathrm{H},c}(X)^{\omega\geqslant w}:=\{K\in \Dd^b_{\mathrm{H},c}(X)\mid \forall n\in\Z, \forall m<w,\ \mathrm{gr}_m^W(\HHp^n(K))=0\},$$
where the $\mathrm{gr}_m^W$ are the graded pieces of the weight filtration on mixed Hodge modules (\cite[Proposition 1.5]{MR1042805}).
The pair $(\Dd^b_{\mathrm{H},c}(X)^{\omega\leqslant w},\Dd^b_{\mathrm{H},c}(X)^{\omega\geqslant w+1})$ form a t-structure (of trivial heart) on $\Dd^b_{\mathrm{H},c}(X)$, such that the truncations functors $\omega_{\leqslant w}$ and $\omega_{\geqslant w+1}$ are exact functors, t-exact for the perverse t-structure (\cite[Proposition 3.1.1.]{MR2350050}). They give a filtration $(\omega_{\leqslant w}K)_w$ on each complex $K$, that gives back the weight filtration on the heart. For any smooth map $f:Y\to X$ of finite type $\C$-schemes, and any integer $d$, the functor $f^*[d]$ is t-exact for the weight t-structures (indeed, if $d$ is the relative dimension of $f$ we see that $f^*[d]$ is perverse t-exact and preserves weights, thus is t-exact for the weight-t-structure. The shift functor being weight-t-exact this proves the claim). This induces a weight t-structure on $\Dd_{\mathrm{H}}(X)$ by indization and it has a similar description. 
\end{constr}
\begin{prop}
  \label{weight-filtr-stacks}
  Let $\Xfrak$ be an Artin stack of finite type over $\C$. Then the $\infty$-categories $\Dd_{\mathrm{H}}(\Xfrak)$ and $\Dd_{\mathrm{H},c}(\Xfrak)$ admit weight-t-structures such that for every smooth map $f:X\to \Xfrak$ with $X$ a finite type $\C$-scheme and every integer $d$, the functor $f^*$ is weight-t-exact. Moreover, the weight truncations functors are perverse t-exact. The inclusion functor $\Dd_{\mathrm{H},c}(\Xfrak)\to\Dd_{\mathrm{H}}(\Xfrak)$ is weight-t-exact.
\end{prop}
\begin{proof}
  Once again assume that $\Xfrak$ is connected, choose $\pi:X\to\Xfrak$ a presentation of $\Xfrak$, of relative dimension $d$. Then if $f_n$ is a part of the $\check{\text{C}}$ech  nerve of $\pi$, the functors $f_n^*[d]$ are t-exact for the weight t-structures, thus this induces a weight t-structure on the limit as in \cref{defDesc} (as in the proof of \Cref{tstructureperv} we have to first deal with algebraic spaces but everything works the same). Of course, it restricts to $\Dd^b_{\mathrm{H},c}(\Xfrak)$, and then to $\Dd_{\mathrm{H},c}(\Xfrak)$. The perverse t-exactness of the weight truncation follows from the fact that each $f_n^*[d]$ are also perverse t-exact so that this follows from the same property over schemes.
\end{proof}

\begin{defi}
  \label{defi:weights}
  Let $w\in \Z$.
  A object $K\in\Dd_{\mathrm{H}}(\Xfrak)$ is of \emph{punctual weights} $\leqslant w$ if for each $i\in\Z$, the map $\HHp^i(\omega_{\leqslant w+i}K)\to\HHp^i(K)$ is an equivalence (or equivalently, if $\HHp^i(\omega_{\geqslant w+i+1}K)=0$). We say that $K$ is of punctual weights $\geqslant w$ if its Verdier dual $\mathbb{D}_\Xfrak(K)$ is of punctual weights $\leqslant -w$.
  We shall denote by $\Dd_{\mathrm{H},(c)}^{?}(\Xfrak)_{\leqslant w}$ and $\Dd_{\mathrm{H},(c)}^{?}(\Xfrak)_{\geqslant w}$ the corresponding full subcategories, where $?\in\{\emptyset,b,+,-\}$. 
\end{defi}

\begin{rem}
  The above definition \emph{differs} from the weight t-structure we used to define weights.
\end{rem}

\begin{prop}
  \label{weightbasiccompati}
  Let $f:\Yfrak\to \Xfrak$ be a morphism of algebraic stacks of finite type over $\C$ and let $w,w'\in\Z$.
  \begin{enumerate}
    \item Verdier duality on $\Xfrak$ swaps $\Dd_{\mathrm{H},c}^+(\Xfrak)_{\leqslant w }$ and $\Dd_{\mathrm{H},c}^-(\Xfrak)_{\geqslant -w}$.
    \item The pullback $f^*$ sends $\Dd_{\mathrm{H},c}(\Xfrak)_{\leqslant w }$ to $\Dd_{\mathrm{H},c}(\Yfrak)_{\leqslant w }$ and the exceptional pullback $f^!$ sends 
    $\Dd_{\mathrm{H},c}(\Xfrak)_{\geqslant w}$ to $\Dd_{\mathrm{H},c}(\Yfrak)_{\geqslant w}$. 
    If $f$ is smooth, the pullback functor $f^*$ preserves weights.
    \item The tensor product restricts to 
    $$-\otimes- \colon \Dd_{\mathrm{H},c}^-(\Xfrak)_{\leqslant w} \times \Dd_{\mathrm{H},c}^-(\Xfrak)_{\leqslant w'}\to \Dd_{\mathrm{H},c}^-(\Xfrak)_{\leqslant w+w'}.$$
    \item The internal homomorphism functor restricts to 
      $$\sHom(-,-)\colon \Dd_{\mathrm{H},c}^-(\Xfrak)_{\leqslant w} \times \Dd_{\mathrm{H},c}^-(\Xfrak)_{\geqslant w'}\to \Dd_{\mathrm{H},c}^-(\Xfrak)_{\geqslant w'-w}.$$
  \end{enumerate}
\end{prop}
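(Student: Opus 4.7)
The plan is to reduce each assertion to the corresponding scheme-level statement, which is due to Saito for mixed Hodge modules. The first step is to verify that the conditions of \Cref{defi:weights} are local in the smooth topology on $\Xfrak$: given a smooth surjection $\pi\colon X\to\Xfrak$, the functor $\pi^*$ is t-exact and conservative for the ordinary t-structure, so $\pi^*\HHh^n(K)\simeq\HHh^n(\pi^*K)$. On the ordinary heart $\mathrm{MHM}_\mathrm{ord}$, smooth pullback preserves and detects weights---this is the same mechanism that underlies the construction of the weight t-structure via the Cech nerve of $\pi$. It follows that $K$ is of weight $\leqslant w$ (resp.\ $\geqslant w$) in the sense of \Cref{defi:weights} if and only if $\pi^*K$ is.

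Granted this smooth-locality, assertion 3 follows immediately, since the tensor product commutes with the smooth pullback $\pi^*$ on both sides, reducing the bound to the scheme case. For assertion 1, I would use the identity $\pi^*\circ \mathbb{D}_\Xfrak\simeq \mathbb{D}_X\circ \pi^!$ of \Cref{Ddualliss} together with the fact that for smooth $\pi$, $\pi^!$ is just $\pi^*$ up to a Tate twist and shift, and invoke the scheme-level statement that Verdier duality swaps the weight bounds. For assertion 4, smoothness implies that $\sHom$ commutes with $\pi^*$ (by purity and the projection formula), so this also reduces to schemes.

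For assertion 2, I would first handle $f^*$ by choosing smooth presentations $\pi_\Yfrak\colon Y\to\Yfrak$ and $\pi_\Xfrak\colon X\to\Xfrak$ together with a lift $\tilde f\colon Y'\to X$ of $f$ obtained after base change, so that $\pi_\Yfrak^*\circ f^*\simeq \tilde f^*\circ\pi_\Xfrak^*$ (up to composing with an auxiliary smooth cover that preserves the weight conditions by step one). The preservation of $\leqslant w$ by $f^*$ then reduces to the scheme-level theorem. The assertion for $f^!$ on $\geqslant w$ follows from 1 and the identity $f^!\circ \mathbb{D}_\Xfrak\simeq \mathbb{D}_\Yfrak\circ f^*$. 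When $f$ is smooth, \Cref{Ddualliss} shows that $f^!$ and $f^*$ differ by a Tate twist and a shift, so $f^*$ preserves both weight bounds simultaneously.

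The main obstacle is the first step: justifying that the weight conditions descend under smooth covers. This rests on the compatibility between the ordinary t-structure, the weight filtration on $\mathrm{MHM}_\mathrm{ord}$, and smooth pullback on schemes---a standard but crucial property of Saito's construction that one has to isolate carefully because the weight notion of \Cref{defi:weights} is built from the ordinary rather than the perverse cohomology functors. Once this is in place, all four assertions become formal consequences of the scheme-level theorems transported through the descent formula \cref{defDesc}.
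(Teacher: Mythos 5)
Your proposal is correct and follows essentially the same route as the paper: the paper's proof is precisely that a presentation $\pi\colon X\to\Xfrak$ is conservative and detects weights by construction of the weight filtration, so all four assertions reduce to the standard scheme-level facts. Your additional details (commutation of $\otimes$, $\sHom$, duality via \Cref{Ddualliss}, and the twist-and-shift comparison of $f^*$ and $f^!$ with smooth pullback) are just an expanded version of that same reduction.
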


\begin{proof}
  By definition, if $\pi:X\to\Xfrak$ is a smooth presentation, the functor $\pi^*$ is conservative and detects weights. Thus, all results follow from the usual results over schemes (see \cite[Propositions 1.7 and 1.9]{MR1042805}).
\end{proof}

\begin{prop}
  \label{better310}
  Let $\Xfrak$ be an algebraic stack with affine stabilisers, of finite type over $\C$. Let $f:\Xfrak\to \Yfrak$ be a finite type morphism. If $K$ is an object of $\Dd_{\mathrm{H}}(\Xfrak)$ of punctual weights $\leqslant w$, then $f_!K$ has punctual weights $\leqslant w$.
  \end{prop}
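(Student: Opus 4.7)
The plan is to adapt Sun's strategy from \cite{MR2972459} (treating $\ell$-adic sheaves on stacks over finite fields) to the present Hodge-theoretic setting, reducing to the analogous and known statement for morphisms of finite type $\C$-schemes.

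First, I would perform a Noetherian dévissage on $\Xfrak$. Applying $f_!$ to the localisation triangle $j_!j^*K\to K\to i_*i^*K$ of \Cref{opera}(5), and using that weights $\leqslant w$ are stable under extensions in $\Dd_{H,c}(\Spec\C)$, it suffices to prove the statement on a dense open substack of $\Xfrak$. Since $\Xfrak$ has affine stabilisers, Kresch-type stratification results allow one to arrange such an open substack to be a quotient stack $[X/G]$ with $X$ a finite type $\C$-scheme and $G$ a connected linear algebraic group.

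Second, I would factor $f: [X/G]\to\Spec\C$ as $[X/G]\xrightarrow{p} BG \xrightarrow{\pi_G}\Spec\C$ and dispatch $p$ directly. The base change of $p$ along the canonical smooth cover $q:\Spec\C\to BG$ is the structural map $\pi_X: X\to\Spec\C$, with $q':X\to[X/G]$ a $G$-torsor, hence smooth. Proper base change gives $q^*p_!\simeq \pi_{X,!}q'^*$, and weights are preserved and detected by smooth pullback (\Cref{weightbasiccompati}); the scheme case (Deligne/Saito) applied to $\pi_{X,!}q'^*K$ then forces $p_!K$ to have weights $\leqslant w$.

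Third, the core remaining case is $\pi_{G,!}$ on $\Dd^b_H(BG)$. I would use the standard Totaro-type approximation: finite-dimensional $G$-representations $V_n$ together with $G$-stable open subschemes $U_n\subseteq V_n$ on which $G$ acts freely, such that $U_n/G$ is a smooth finite type scheme and the codimension of $V_n\setminus U_n$ in $V_n$ tends to infinity. For fixed $n\in\Z$ and $K\in\Dd^b_H(BG)$ of weights $\leqslant w$, smooth base change along $U_{n_0}\to BG$ combined with purity (\Cref{opera}(7)) identifies $\HHh^n(\pi_{G,!}K)$, for $n_0$ large compared to $n$ and to the amplitude of $K$, with a Tate twist and shift of the corresponding cohomology of the scheme pushforward along $U_{n_0}/G\to\Spec\C$. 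The scheme case then produces the required weight bound on $\HHh^n(\pi_{G,!}K)$.

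The main obstacle is this last classifying-stack step: because $\pi_{G,!}$ is not cohomologically bounded, the weight estimate must be extracted degree-by-degree through a sequence of finite-dimensional approximations. One must carefully verify that the Tate twists and shifts introduced by purity precisely absorb the dimensional discrepancy between $U_{n_0}/G$ and $BG$, so that the scheme-level weight bound translates into the desired inequality $\leqslant w+n$ on each $\HHh^n(\pi_{G,!}K)$; this is exactly where Sun's combinatorial bookkeeping is used, and it transfers unchanged to the mixed Hodge module setting once purity and smooth base change are in place.
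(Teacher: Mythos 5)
Your proposal is correct in outline, but it follows a genuinely different route from the paper in the two key reduction steps. The paper does not invoke Kresch's stratification: after the same dévissage (pure perverse, lisse with constant Betti realisation on an étale cover of a nonempty open), it uses flatness of the inertia on a dense open and the rigidification to produce a faithfully flat map $\Xfrak\to X$ to an algebraic space, then a Leray spectral sequence and proper base change to reduce, stalk by stalk, to $\Xfrak=BG$ with $G$ linear; the classifying-space case is then handled à la Deligne (Hodge III): passage to a maximal torus via the splitting principle (which is why the constant-$\Q$-structure reduction is needed), Künneth to reduce to $B\Gm$, and the $\mathbb{P}^N$-approximation of $B\Gm$, all phrased for $\pi_*$ and positivity of weights and dualised at the end. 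Your version replaces the rigidification by Kresch's stratification into global quotients $[X/GL_n]$, splits off $[X/G]\to BG$ by base change along the presentation $\Spec\C\to BG$ (legitimate, since weights on a stack are detected by pullback along a presentation, cf.\ the proof of \Cref{weightbasiccompati}), and treats $BG$ for arbitrary linear $G$ directly by Totaro approximation plus the scheme case, working with $\pi_!$ throughout; this avoids the maximal-torus/Künneth step and the constant-coefficient reduction, at the price of needing the approximation statement for an arbitrary bounded constructible $K$ on $BG$. The one step you should make precise is exactly that statement: smooth base change and purity alone do not identify $\HHh^n(\pi_{G,!}K)$ with the scheme-level pushforward; you need the localisation triangle on $[V_{n_0}/G]$, homotopy invariance for the vector bundle $[V_{n_0}/G]\to BG$, and a cohomological amplitude bound for $!$-pushforward from the quotient stack $[(V_{n_0}\setminus U_{n_0})/G]$ (obtainable from the descent spectral sequence used in the constructibility theorem) showing that the error term contributes only in degrees bounded above by roughly $b_K-2\,\mathrm{codim}$, hence vanishes in any fixed degree as $n_0$ grows; once this is in place the twist $\{N\}=(N)[2N]$ is weight-neutral for the convention of \Cref{defi:weights}, so the scheme case gives the bound $w+n$ on each $\HHh^n(\pi_{G,!}K)$ as you claim (compare \Cref{calculpshriek}, which is this argument for the constant object).
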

  \begin{proof}
      By choosing a smooth presentation $\pi$ of $\Yfrak$, using proper base change, and the fact that $\pi^*[d]$ is conservative, perverse t-exact and weight-t-exact, it detects punctual weights, so we may assume that $\Yfrak=Y$ is a separated variety.
      First note that for any stack $\Xfrak$, we have the formula 
      \[\Dd_{\mathrm{H}}(\Xfrak)_{\leqslant w}=\{K\in \Dd_{\mathrm{H}}(\Xfrak)\mid \forall i\in\Z, \HHp^i(\omega^{w+i+1}K)=0\}.\]
      In particular, as the functors $\omega^{w+i+1}$ and $\HHp^i$ commute with filtered colimits, we see that $\Dd_{\mathrm{H}}(\Xfrak)_{\leqslant w}$ is stable under filtered colimits in $\Dd_{\mathrm{H}}(\Xfrak)$. 

      By \cite[Proposition 3.5.2 and Proposition 3.5.9]{MR1719823} and the fact that $\Xfrak$ is of finite type over $k$, there exists a finite stratification $\Xfrak_i$ of $\Xfrak$ by locally closed substacks such that each $\Xfrak_i\simeq [X_i/G_i]$ is a global quotient stack of a quasi-projective variety $X_i$ by a smooth connected algebraic group $G_i$ acting linearly on $X_i$. As $\Dd_{\mathrm{H}}(Y)_{\leqslant w}$ is closed under extensions, using the localisation triangle we see that we may assume that $\Xfrak\simeq [X/G]$ is a global quotient stack with $X$ quasi-projective. 

      By \cite[Remark 1.4]{MR1743244} we may find a specific increasing sequence $(U_n)$ of representations of $G$ (hence vector bundles on $BG$), called a Borel resolution, that helps compute the invariants of $[X/G]$ as follows: 
      Let $X\overset{G}{\times}U_n := [X/G]\times_{BG} U_n$, then for any $\A^1$-invariant étale sheaf $F\colon \mathrm{Sm}^\op_{[X/G]}\to\ccal$ on the category of schemes that are smooth over $[X/G]$ with values in an $\infty$-category $\ccal$ admitting all small limits, the canonical map 
      \[F^\lhd([X/G])\to \lim_n F^\lhd(X\overset{G}{\times}U_n)\] is an equivalence, where $F^\lhd$ is the right Kan extension of $F$ to algebraic stacks. This is \cite[Theorem 3.6]{khanEquivariantGeneralizedCohomology2024}. Note that by \cite[Remark 3.4]{khanEquivariantGeneralizedCohomology2024}, each $X\overset{G}{\times}U_n$ is a quasi-projective variety.

      Let $K\in \Dd_{\mathrm{H}}([X/G])_{\leqslant w}$. We apply the above to the functor $F$, opposite to the functor that sends a smooth map $g\colon Y\to[X/G]$, with $Y$ a variety, to $\pi^Y_!g^!K$. By $!$-descent for the étale topology and $\A^1$-invariance of $\D_\mathrm{H}$ this functor $F$ satisfies the conditions stated in the previous paragraph. Moreover, its right Kan extension to stacks over $[X/G]$ has the same formula, again by $!$-descent. In particular, we have that the map 
      \[\colim_n (fp_n)_!p_n^!K\to f_!K\] is an equivalence, where $p_n\colon X\times^G U_n\to [X/G]$ is the projection. Note that this is a filtered colimit, so that it suffices to prove that each $(fp_n)^!p_n^!K$ is of punctual weights $\leqslant w$. But as $p_n$ is smooth, the weights of $p_n^!K$ are the weights of $K$, and the map $fp_n$ is a map of schemes, so that we have reduced to the case of schemes, where this is true by \cite[Proposition 1.7]{MR1042805}.
 \end{proof}

  \begin{cor}
    \label{fshriekweights}
      Let $f:\Yfrak\to\Xfrak$ be a morphism of algebraic stacks of finite type over $\C$. Assume that $\Yfrak$ has affine stabilisers. Then if $K\in \Dd^+_{\mathrm{H},c}(\Yfrak)$ is of weights $\geqslant w$ the object $f_*K$ also has weights $\geqslant w$.
  \end{cor}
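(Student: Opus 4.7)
The plan is to deduce this corollary from the preceding proposition by a two-step base change reduction, handling the $f_*$-case by Verdier duality.

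First, the $f_*$ case reduces to the $f_!$ case: we have $\mathbb{D}_\Xfrak\circ f_* \simeq f_!\circ \mathbb{D}_\Yfrak$, and by Proposition \ref{weightbasiccompati}(1) Verdier duality exchanges the conditions of weights $\leqslant -w$ and $\geqslant w$. Since $\Yfrak$ and $\Xfrak$ share the same stabilisers relevant to the statement (only $\Yfrak$ having affine stabilisers is used), the dual reformulation falls directly within the scope of the $f_!$-case applied to $\mathbb{D}_\Yfrak K$.

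For the $f_!$-case I would first reduce to the situation where the target is a scheme. Choose a smooth presentation $\pi:X\to\Xfrak$ by a scheme. By Proposition \ref{weightbasiccompati}(2), the conservative pullback functor $\pi^*$ is smooth and thus both preserves and reflects the condition of being of weights $\leqslant w$, so it suffices to prove that $\pi^*f_!K$ has weights $\leqslant w$. Forming the Cartesian square
\[
\begin{tikzcd}
\Zfrak \arrow[r,"g"] \arrow[d,"q"]
\arrow[dr, phantom, very near start, "{\lrcorner}"]
& X \arrow[d,"\pi"] \\
\Yfrak \arrow[r,"f",swap] & \Xfrak
\end{tikzcd}
\]
base change (Theorem \ref{opera}(4)) yields $\pi^*f_!K \simeq g_! q^*K$. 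Since $q$ is smooth and representable (as a pullback of $\pi$), the stack $\Zfrak$ still has affine stabilisers, and $q^*K$ has weights $\leqslant w$ by Proposition \ref{weightbasiccompati}(2). Hence we may assume $\Xfrak=X$ is a finite type $\C$-scheme.

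In this setting, to check that $g_!K'$ has weights $\leqslant w$ on $X$, it suffices to check the condition on stalks at closed points, using the constructibility of the perverse cohomology sheaves of $g_!K'$. For a closed point $x:\Spec\C\hookrightarrow X$, proper base change gives $x^*g_!K' \simeq (g_x)_!(K'_{\mid\Zfrak_x})$ where $g_x:\Zfrak_x\to\Spec\C$ is the fibre. The stack $\Zfrak_x$ still has affine stabilisers, and $K'_{\mid \Zfrak_x}$ has weights $\leqslant w$ by Proposition \ref{weightbasiccompati}(2). The previous proposition, applied to the structural map $g_x$, then concludes. The main subtlety to check is that the weight condition on the scheme $X$ is detected at closed points; this follows from the constructibility of mixed Hodge modules and the fact that the weight filtration is a filtration in the abelian category of Hodge modules on each stratum.
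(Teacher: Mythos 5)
Your proposal is correct and follows essentially the same route as the paper: reduce $f_*$ to $f_!$ by Verdier duality, pull back along a smooth presentation $\pi:X\to\Xfrak$, detect weights pointwise on the scheme $X$, and conclude by base change for $(x^*,f_!)$ together with the preceding proposition applied to the fibre, which still has affine stabilisers. The only difference is presentational (you insert an intermediate base change over $\pi$ and spell out the pointwise detection of weights), which the paper leaves implicit.
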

  \begin{proof}
    By duality, this follows from the case of $f_!$. 
  \end{proof}

  \begin{cor}
    \label{wstruct}
    Let $\Xfrak$ be an algebraic stack with affine stabilisers, of finite type over $\C$. Then there is a weight structure \emph{à la} Bondarko on $\Dd_{H}^b(\Xfrak)$ whose heart consist of objects of weight $0$ as in \Cref{defi:weights}. In particular, any pure object in $\Dd_\mathrm{H}^b(\Xfrak)$ is the sum of its cohomology objects, and any pure object of the  perverse heart is semi-simple.
  \end{cor}
  \begin{proof}
    We want to use \cite[Theorem 4.3.2]{MR2746283} to construct a weight structure from its heart of pure objects of weight 0. Thus we want to prove that pure objects of weight $0$ generated $\Dd^b_{\mathrm{H},c}(\Xfrak)$ as a thick subcategory, and that it is negative (for us, semi-simple will be enough). Thus if $K,L\in\Dd_{H}^b(\Xfrak)$ are pure of weight zero, we want to show that $\Hom_{\Dd_{H}^b(\Xfrak)}(K,L[1])$ vanishes. Let $f:\Xfrak\to\Spec\C$ be the structural morphism. As we have 
    $$\Hom_{\Dd_{H}^b(\Xfrak)}(K,L[1])\simeq \Hom_{\Dd_{\mathrm{H}}(\Spec\C)}(\Q_{\Spec \C},f_*\sHom(K,L)[1]),$$ by \Cref{fshriekweights} and the usual orthogonality in the derived category of mixed Hodge structures, it suffices to show that $\sHom(K,L)$ has weights $\geqslant 0$, which results from \Cref{weightbasiccompati}.

    Now objects of weight $0$ generate the $\infty$-category under finite colimits. Indeed by dévissage it suffices to show that objects of the perverse heart which are pure are in the $\infty$-category generated by the complexes of weight $0$, but now this is only a shift. The last sentence of the corollary is now formal, see \cite[Théorème 5.4.5]{MR0751966} and \cite[Théorème 5.3.8]{MR0751966}.
  \end{proof}

  \begin{rem}
    \label{wstructFail}
    The above \Cref{wstruct} is \emph{false} for general algebraic stacks, as shown by Sun (following an example that goes back to Drinfeld) in \cite{MR2972459}. Indeed one can show that the pushforward of the constant sheaf by the quotient map $\pi\colon \Spec(\C)\to\mathrm{B}E$ with $\mathrm{B}E$ the classifying space of an elliptic curve is not semi-simple although $\pi$ is smooth and proper.
  \end{rem}
\subsubsection{Nearby cycles}
Let $\Xfrak$ be an algebraic stack locally of finite type over $\C$ and let $f:\Xfrak\to\A^1_\C$ be a function. We have the following diagram:
\begin{equation}\label{setupNC}\begin{tikzcd}
	\Ufrak & \Xfrak & \Zfrak \\
	{\Gm_{,\C}} & {\A^1_\C} & {\{0\}}
	\arrow[from=1-1, to=2-1,"f_\eta"]
	\arrow["j", from=2-1, to=2-2]
	\arrow["i"', from=2-3, to=2-2]
	\arrow[from=1-3, to=2-3,"f_s"]
	\arrow["f"', from=1-2, to=2-2]
	\arrow["i"', from=1-3, to=1-2]
	\arrow["j", from=1-1, to=1-2]
\end{tikzcd}.\end{equation}
We wish to construct a nearby cycle functor 
$$\Psi_f\colon \Dd_{\mathrm{H}}(\Ufrak)\to\Dd_{\mathrm{H}}(\Zfrak)$$ with good properties. We begin with the unipotent part of the nearby cycle following \cite{cassCentralMotivesParahoric2024b}, whose authors give a wonderful interpretation of Ayoub's unipotent nearby cycle functor (as in \cite{MR2438151}). 

In \cite[Definition 2.22]{cassCentralMotivesParahoric2024b}, Cass, van den Hove and Scholbach define a $\Q$-linear stable $\infty$-category $\Nilp$ (denoted by $\Nilp_\Q$ in \emph{loc. cit.}) whose objects can be interpreted as pairs $(K,N)$ with $K\in\mathrm{Mod}_\Q^\Z$ a complex of graded $\Q$-vector spaces and $N:K(1)\to K$ is a locally nilpotent map of graded complexes, where $K(1):= K\otimes_\Q \Q(1)$ and $\Q(1)$ is the graded $\Q$-vector space placed in degree $-1$. This $\infty$-category is compactly generated by objects $(\Q(k),\Q(k+1)\xrightarrow{0}\Q(k))$, and if $(K,N)\in\Nilp$ is compact, its underlying complex $K$ is compact and the operator $N$ is nilpotent. Now for any stably presentable $\infty$-category $\ccal$ with and action of $\mathrm{Mod}_\Q^\Z$, on can define 
$$\Nilp\ccal := \ccal\otimes_{\mathrm{Mod}_\Q^\Z}\Nilp,$$ which has a similar description as pairs $(c,N)$ with $c\in\ccal$ and $N:c(1)\to c$, which is nilpotent if $c$ is compact. 

The relation of this construction with unipotent nearby cycles $\Upsilon_f$ comes from the desire to have a monodromy operator on $\Upsilon_f$, thus a lift of $\Upsilon$ as a functor 
$$\Dd_{\mathrm{H}}(\mathfrak{U})\to\Nilp\Dd_{\mathrm{H}}(\mathfrak{Z}).$$

This is made possible thanks to the following observation. 

\begin{prop}
  Let $p:\Gm_\C\to\Spec\C$ be the projection. Then the pushforward 
  $$p_*\colon \Dd_{HT}(\Gm)\to\Dd_{HT}(\C)$$ exhibits the $\infty$-category $\Dd_{HT}(\Gm)$ as $\Nilp\Dd_{HT}(\C)$. Here we have denoted by $\Dd_{HT}(X)$ the full subcategory of $\Dd_{\mathrm{H}}(X)$ of mixed Hodge Tate modules over $X$, that is the $\infty$-category generated under colimits and shifts by the $\Q(i)$ for $i\in\Z$.
\end{prop}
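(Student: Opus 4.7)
The plan is to apply Barr--Beck--Lurie monadic descent to the adjunction $p^*\dashv p_*$ between the Hodge-Tate subcategories, identify the resulting monad with the commutative algebra $p_*\Q_{\Gm}$, and then match this with the algebra implicit in the definition of $\Nilp\Dd_{HT}(\C)$. The pullback $p^*\colon \Dd_{HT}(\C)\to\Dd_{HT}(\Gm)$ is colimit-preserving, symmetric monoidal, and sends the generator $\Q_\C(i)$ to $\Q_\Gm(i)$. Its right adjoint $p_*$ preserves Hodge-Tate objects: by the projection formula $p_*p^*K\simeq K\otimes p_*\Q_\Gm$, it suffices to verify that $p_*\Q_\Gm$ itself lies in $\Dd_{HT}(\C)$, and the standard computation of the cohomology of $\Gm$ as a Hodge structure gives $p_*\Q_\Gm\simeq \Q\oplus \Q(-1)[-1]$. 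The splitting of this complex uses the cohomological dimension $1$ of the abelian category $\mathrm{MHS}^p_{T,\C}$ of Hodge-Tate structures to kill the only potentially nonzero Postnikov invariant, which would live in $\Ext^2(\Q(-1),\Q)$.

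Next I would verify the hypotheses of Barr--Beck--Lurie. Conservativity of $p_*$ on $\Dd_{HT}(\Gm)$ follows from the fact that this category is compactly generated by the $\Q_\Gm(i)[n]$ together with the adjunction $\Hom_\Gm(\Q_\Gm(i)[n],K)\simeq \Hom_\C(\Q_\C(i)[n],p_*K)$. Preservation of colimits by $p_*$ follows from the compactness of $p_*\Q_\Gm$ (it is a bounded complex built from Tate twists) combined with the projection formula, equivalently from smoothness of $p$, which makes $p^*$ itself a left adjoint preserving compacts. Monadic descent then yields $\Dd_{HT}(\Gm)\simeq \mathrm{Mod}_{p_*\Q_\Gm}(\Dd_{HT}(\C))$, with $p_*\Q_\Gm$ carrying its canonical commutative algebra structure arising from $p^*$ being symmetric monoidal.

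The remaining, and main, step is to identify $\mathrm{Mod}_{p_*\Q_\Gm}(\Dd_{HT}(\C))$ with $\Nilp\Dd_{HT}(\C)$. The algebra structure on $\Q\oplus \Q(-1)[-1]$ must be square-zero, because the only target for the multiplication $\Q(-1)[-1]\otimes\Q(-1)[-1]\to p_*\Q_\Gm$ would live in $\mathrm{H}^2(\Gm)=0$. A module structure over this square-zero $\mathbb{E}_\infty$-algebra on an object $c\in\Dd_{HT}(\C)$ is precisely the datum of a map $\Q(-1)[-1]\otimes c\to c$, which after re-indexing by the Tate twist and shift is exactly a monodromy operator $N\colon c(1)\to c$ as in the definition of $\Nilp$; local nilpotence is automatic because any compact module has compact underlying object in $\Dd_{HT}(\C)$ and the derived module structure forces the iterated action of the augmentation ideal to terminate. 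This is the Hodge-theoretic analogue of the motivic identification carried out in \cite{cassCentralMotivesParahoric2024b}, and carries over \emph{mutatis mutandis} because the realisation $\rho_H$ preserves Tate objects and all the operations, so the algebra $p_*\Q_\Gm$ on the Hodge side matches its motivic counterpart. The main obstacle is the careful bookkeeping of Tate twists and cohomological shifts in the dictionary between square-zero modules and $\Nilp$-objects; once this dictionary is pinned down, the monadic descent and the explicit cohomology of $\Gm$ finish the argument.
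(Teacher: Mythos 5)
Your opening steps coincide with the paper's: Barr--Beck--Lurie applied to $p^*\dashv p_*$, with conservativity checked on the compact generators $\Q(i)[n]$ via the adjunction, colimit-preservation of $p_*$ from $p^*$ preserving compacts, and the computation $p_*\Q_\Gm\simeq\Q\oplus\Q(-1)[-1]$. Up to the equivalence $\Dd_{HT}(\Gm)\simeq\mathrm{Mod}_{p_*\Q}(\Dd_{HT}(\C))$ you are on the paper's track.

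The gap is in the last step, the identification $\mathrm{Mod}_{p_*\Q}(\Dd_{HT}(\C))\simeq\Nilp\Dd_{HT}(\C)$. First, $\Nilp\Dd_{HT}(\C)$ is \emph{defined} as the relative tensor product $\Dd_{HT}(\C)\otimes_{\mathrm{Mod}_\Q^\Z}\Nilp$; the description by pairs $(c,N\colon c(1)\to c)$ is only an informal gloss. Your claim that a module structure over the square-zero algebra $\Q\oplus\Q(-1)[-1]$ ``is precisely the datum of a map $\Q(-1)[-1]\otimes c\to c$'' is not correct as a statement about $\infty$-categorical modules: an $\mathbb{E}_1$- (let alone $\mathbb{E}_\infty$-) module structure carries an infinite tower of coherence data (e.g.\ a nullhomotopy of the double action compatible with the nullhomotopy of the multiplication, and so on), and in $\Dd_{HT}(\C)$ these coherences are not automatically trivial because the Tate objects are not ``graded'': $\Ext^1_{\mathrm{MHS}}(\Q(a),\Q(b))\neq 0$ for $b>a$, so the relevant mapping spaces do not vanish. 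The reason the pairs description is valid at all is the grading of $\mathrm{Mod}_\Q^\Z$, which is exactly what the cited Lemma 2.9 of Cass--van den Hove--Scholbach ($\mathrm{Mod}_\Lambda(\mathrm{Mod}_\Q^\Z)\simeq\Nilp$) exploits. Consequently, what you actually need is not that $p_*\Q$ is ``square-zero in $\Dd_{HT}(\C)$'' (and note that vanishing of $\HH^2(\Gm)$ only trivialises the multiplication on homotopy, not the full $\mathbb{E}_\infty$-structure), but that $p_*\Q$ is equivalent, \emph{as a commutative algebra}, to the image $i(\Lambda)$ of the split square-zero graded algebra $\Lambda=\Q\oplus\Q(-1)[-1]$ under the symmetric monoidal functor $i\colon\mathrm{Mod}_\Q^\Z\to\Dd_{HT}(\C)$; this is the Hodge realisation of Lemma 2.18 of \emph{loc.\ cit.}, which is how the paper proceeds. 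Once you have that graded lift, base change for module categories gives $\mathrm{Mod}_{i(\Lambda)}(\Dd_{HT}(\C))\simeq\Dd_{HT}(\C)\otimes_{\mathrm{Mod}_\Q^\Z}\mathrm{Mod}_\Lambda(\mathrm{Mod}_\Q^\Z)$, and Lemma 2.9 finishes the argument. Your appeal to the motivic comparison via $\rho_H$ gestures at this, but as written you bypass the graded structure entirely, and the direct ``dictionary'' you propose is exactly the point that would fail without it. (Your local nilpotence remark for compact objects is fine, but it is downstream of the problematic identification.)
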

\begin{proof}
  This is the combination of \cite[Corollary 2.20 and Lemma 2.17]{cassCentralMotivesParahoric2024b}, that we reproduce for the reader's convenience. First we remark that the functor $p_*$ indeed preserves Hodge Tate objects because $p_*\Q(n)\simeq \Q(n)\oplus\Q(n+1)[1]$. Now, it is clear that $p_*$ preserves colimits as its left adjoint $p^*$ preserves compact objects, and it is also conservative because if $p_*M=0$ then for each $n\in\Z$ the mapping spectra $\Map_{\Dd_{HT}(\Gm_\C)}(\Q(n),M)\simeq \Map_{\Dd_{HT}(\C)}(\Q(n),p_*M)$ vanishes, so that $M=0$. Thus, by Barr-Beck's theorem, the functor $p_*$ upgrades to an equivalence 
  $$\Dd_{\mathrm{H}}(\Gm_\C)\xrightarrow{\sim} \mathrm{Mod}_{p_*\Q}(\Dd_{\mathrm{H}}(\C)).$$
  There is a unique colimit preserving symmetric monoidal functor $$i\colon\mathrm{Mod}_\Q^\Z\to \Dd_{HT}(\C)$$ that sends $\Q(1)$ to $\Q(1)$.
  The image under the Hodge realisation of \cite[Lemma 2.18]{cassCentralMotivesParahoric2024b} tells us that $p_*\Q$ is, as a commutative algebra object, the object $i(\Lambda)$ where $\Lambda$ is the split square zero extension $\Lambda:=\Q\oplus\Q(-1)[-1]$. Thus we have that 
  $$\Dd_{HT}(\Gm_\C)\simeq \mathrm{Mod}_{p_*\Q}(\Dd_{HT}(\C))\simeq \Dd_{HT}(\C)\otimes_{\mathrm{Mod}^\Z_\Q}\mathrm{Mod}_{\Lambda}(\mathrm{Mod}_\Q^\Z).$$ 
  and the equivalence $\mathrm{Mod}_{\Lambda}(\mathrm{Mod}_\Q^\Z)\simeq \Nilp$ of \cite[Lemma 2.9]{cassCentralMotivesParahoric2024b} finishes the proof.
\end{proof}

We come back to the setup \cref{setupNC}, and define, as \cite[Definition 3.1]{cassCentralMotivesParahoric2024b}:

\begin{defi}
  \label{defUnip}
  The unipotent nearby cycle functor is the composite 
  $$\Upsilon_f \colon \Dd_{\mathrm{H}}(\Ufrak)\to \mathrm{Mod}_{f_\eta^*p^*p_*\Q}(\Dd_{\mathrm{H}}(\Ufrak))\simeq \Nilp\Dd_{\mathrm{H}}(\Ufrak)\xrightarrow{i^*j_*} \Nilp\Dd_{\mathrm{H}}(\Zfrak)$$
  where the first functor is the way one can see any object $K\in\Dd_{\mathrm{H}}(\Ufrak)$ as a $f_\eta^*p^*p_*\Q$-module with the counit $f_\eta^*p^*p_*\Q\otimes K\to K$, and the second is expressing that $i^*j_*$ preserves the nilpotent structure.
\end{defi}

In fact as they show in \cite[Section 3.1]{cassCentralMotivesParahoric2024b}, there is a better way to look at the above definition.
Indeed, the construction of the operations for stacks in \Cref{opera} goes by extending the functor $\Dd_{\mathrm{H}}$ on schemes to a lax monoidal functor 
$$\Dd_{H}\colon\mathrm{Corr}_\C \to \mathrm{Pr}^L_\mathrm{St},$$ 
where $\mathrm{Corr}_\C$ is the $\infty$-category of correspondences of stacks over $\C$: its objects are derived algebraic stacks locally of finite type over $\C$ and its morphisms are informally diagrams 
$$Y\xleftarrow{g} Z\xrightarrow{f} X$$ of algebraic stacks locally of finite type over $\C$. Composition is given by forming pullback squares. The functor sends  $X$ to $\Dd_{\mathrm{H}}(X)$, and the above morphism to 
$$g_!f^*\colon \Dd_{\mathrm{H}}(Y)\to\Dd_{\mathrm{H}}(X).$$ In fact, the projection formulae even allow us to have a functor with values in $\mathrm{Mod}_{\Dd_{H}(\C)}(\mathrm{Pr}^L_\mathrm{St})$.

Now, unipotent nearby cycles can be upgraded to a highly coherent natural transformation $\Upsilon\colon \Dd_{H\eta}\to\Nilp\Dd_{Hs}$ of lax monoidal functors 
$$\mathrm{Corr}_{\A^1_\C}^{\mathrm{pr,sm}}\to\mathrm{Pr}_\mathrm{gr}^L,$$ where $\mathrm{Cor}_{\A^1_\C}^{\mathrm{pr,sm}}$ is the wide subcategory (so it has the same objects but fewer morphisms) of $\mathrm{Corr}_{\A^1_\C}=(\mathrm{Corr}_\C)_{/\A^1_\C}$ whose morphisms are the roofs $Y\xleftarrow{g} Z\xrightarrow{f}X$ over $\A^1$ with $g$ proper and $f$ smooth, and $\mathrm{Pr}_\mathrm{gr}^L:=\mathrm{Mod}_{\mathrm{Mod}_\Q^\Z}(\mathrm{Pr}^L_\mathrm{St})$ is the $\infty$-category of presentable $\infty$-categories with an action of graded complexes of vector spaces.

Here, $\Dd_{H\eta}$ is the restriction of the composition 
$$\Dd_{H\eta}\colon \mathrm{Corr}_{\A^1}\xrightarrow{-\times_{\A^1}\Gm} \mathrm{Corr}_{\Gm} \xrightarrow{\Dd_{\mathrm{H}}} \mathrm{Mod}_{\Dd_{HT}(\Gm_\C)}(\mathrm{Pr}^L_\mathrm{St})\xrightarrow{U}\mathrm{Mod}_{\Dd_{HT}(\C)}(\mathrm{Pr}^L_\mathrm{St})\to \mathrm{Pr}_\mathrm{gr}^L$$

with $U:\Dd_{HT}(\Gm)\to\Dd_{HT}(\C)$ the forgetful functor, and $\Dd_{Hs}$ is the restriction of the composition 
$$\Dd_{Hs}\colon\mathrm{Corr}_{\A^1_\C}\xrightarrow{-\times_{\A^1}\{0\}} \mathrm{Corr}_\C \xrightarrow{\Dd_{\mathrm{H}}} \mathrm{Pr}_\mathrm{gr}^L\xrightarrow{\Nilp}\mathrm{Pr}_\mathrm{gr}^L.$$

Theorem 3.2 in \emph{loc. cit.} is:
\begin{thm}[Cass, van den Hove, Scholbach]
  There is a natural transformation of lax monoidal functors $\mathrm{Corr}^{\mathrm{pr,sm}}_{\A^1_\C}\to\mathrm{Pr}^L_\mathrm{gr}$ 
  $$\Upsilon\colon\Dd_{H\eta}\to\Nilp\Dd_{Hs},$$ 
  whose evaluation at a stack $X/\A^1$ is the functor defined in \Cref{defUnip}
\end{thm}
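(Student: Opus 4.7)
The plan is to derive the stack-level statement from the scheme-level Theorem 3.2 of Cass, van den Hove, and Scholbach by extending along presentations. The essential observation is that both lax monoidal functors $\Dd_{H\eta}$ and $\Nilp\Dd_{Hs}$, viewed on $\mathrm{Corr}^{\mathrm{pr,sm}}_{\A^1_\C}$, are right Kan extensions from the full subcategory of correspondences of schemes. This follows from the descent formula \cref{defDesc} (which itself rests on $h$-hyperdescent, see \Cref{opera}(8)), combined with the observation that postcomposition with $\Nilp = -\otimes_{\mathrm{Mod}_\Q^\Z}\Nilp$ and with the forgetful functor $U$ preserves the relevant limits: since $\Nilp$ is compactly generated with compact generators that are dualisable in the appropriate graded sense, tensoring with it commutes with limits of presentable $\infty$-categories. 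Likewise the fiber product constructions $-\times_{\A^1}\Gm$ and $-\times_{\A^1}\{0\}$ on correspondences are compatible with smooth presentations, so restriction to scheme correspondences is well-behaved.

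Granted this, the scheme-level theorem yields a natural transformation of lax monoidal functors on the scheme part of $\mathrm{Corr}^{\mathrm{pr,sm}}_{\A^1_\C}$, which extends uniquely by right Kan extension to the full stack category. To verify that this extension evaluates at $\Xfrak/\A^1$ to the functor of \Cref{defUnip}, one picks a smooth presentation $\pi\colon X\to\Xfrak$ and observes that the diagram \cref{setupNC} base-changes termwise along the \v{C}ech nerve of $\pi$. The composite $i^*j_*$ defining $\Upsilon_f$ commutes with this smooth descent thanks to smooth base change and proper base change from \Cref{opera}, and the equivalence $\Dd_{HT}(\Gm_\C)\simeq \Nilp\Dd_{HT}(\C)$ used to reinterpret $f_\eta^*p^*p_*\Q$-module structures as nilpotent endomorphisms is itself preserved by smooth pullback, as it is deduced via Barr-Beck from the projection formula.

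The main obstacle is the coherence of the lax monoidal structure under this Kan extension: one must check that the symmetric monoidal structure on stack-level correspondences (external product of spans) is itself the Kan extension of its restriction to schemes, and that $\Upsilon$ is monoidal for this extended structure. This reduces to the observation that a product of smooth presentations presents the product of stacks, combined with the projection formula for smooth morphisms. Beyond this, everything is formal $(\infty,2)$-categorical manipulation in $\mathrm{Pr}^L_\mathrm{gr}$ of the type already invoked through the Liu-Zheng machinery in the proof of \Cref{opera}.
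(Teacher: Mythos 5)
Your route is genuinely different from the paper's, and it hinges on a claim you have not justified. You want to obtain $\Upsilon$ on stacks by right Kan extension of the scheme-level transformation of Cass--van den Hove--Scholbach, and for that you assert that $\Dd_{H\eta}$ and $\Nilp\Dd_{Hs}$, as lax monoidal functors on $\mathrm{Corr}^{\mathrm{pr,sm}}_{\A^1_\C}$, are right Kan extensions of their restrictions to the subcategory of scheme correspondences, ``because of'' the descent formula \cref{defDesc}. But \cref{defDesc} is a statement about the $*$-pullback functoriality only: it identifies $\Dd_H(\Xfrak)$ with a limit over the \v{C}ech nerve of a presentation inside $\mathrm{Pr}^L$. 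A right Kan extension along the inclusion of scheme correspondences into stack correspondences is computed by a limit over a comma category whose objects are spans $X\leftarrow Z\rightarrow \Xfrak$ with a properness/smoothness constraint, and whose morphisms are again correspondences; identifying this limit with the \v{C}ech-nerve limit (and doing so compatibly with the lax monoidal structures, i.e.\ at the level of the symmetric monoidal correspondence category) is precisely the nontrivial cofinality/gluing problem that the Liu--Zheng machinery is designed to solve, and it is not addressed by smooth and proper base change alone. As written, the key step of your argument is an assertion, not a proof, and the subsequent points (uniqueness of the extension, evaluation at $\Xfrak/\A^1$, monoidal coherence) all depend on it.

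For contrast, the paper does not Kan-extend the transformation at all: it reruns the construction of Cass--van den Hove--Scholbach verbatim with $\Dd_H$ already extended to stacks (where the six operations are available by \Cref{opera}), and the only points requiring care are that, since $\Dd_H(\Xfrak)$ is no longer compactly generated, every functor entering the construction must be checked to preserve colimits --- the only problematic one being $j_*$, which is handled by checking on a presentation via smooth base change --- and that $\Nilp\Dd_{Hs}$ is still an $h$-sheaf, because $\Nilp$ is compactly generated so $-\otimes_{\mathrm{Mod}_\Q^\Z}\Nilp$ commutes with limits (a point your proposal does get right). If you want to salvage your approach, you would need to prove the Kan-extension statement for functors out of $\mathrm{Corr}^{\mathrm{pr,sm}}_{\A^1_\C}$, which is a substantially harder result than the one you are trying to establish; the direct route is both shorter and what the paper actually does.
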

\begin{proof}
  The proof is \emph{verbatim} the same as their proof in \cite[Section 3.3]{cassCentralMotivesParahoric2024b}. The only thing to check is that over stacks, where our categories are not compactly generated (but over schemes everything is compactly generated hence for example mixed Hodge Tate categories are rigid), all functors in the play indeed preserve colimits. Only $j_*$ could be a problem, but the preservation of colimits can be checked on a presentation and it thus follows from smooth base change. Also, $\Nilp\Dd_{Hs}$ stays a $h$-sheaf because $\Nilp$ is compactly generated, hence the functor $\otimes_{\mathrm{Mod}_\Q^\Z}\Nilp$ commutes with limits.
\end{proof}
We will denote by $\Upsilon_f\colon \Dd_{\mathrm{H}}(\Ufrak)\to\Dd_{\mathrm{H}}(\Zfrak)$ the functor obtained after applying $\Upsilon$ to $f:\Xfrak\to\A^1_\C$ and forgetting the monodromy locally nilpotent operator $N$.
The fact that $\Upsilon$ above is a lax monoidal natural transformation expresses the usual compatibilities of $\Upsilon_f$ with proper pushforward and smooth pullbacks, together with a natural transformation 
$$\Upsilon_{f_1}\boxtimes \Upsilon_{f_2}\to\Upsilon_{f\times g}$$ compatible with the nilpotent operators when one has two functions $f_i\colon \Xfrak_i\to\A^1_\C$. 

\begin{prop}
  Let $f:\Xfrak\to\A^1_\C$ be a function on a algebraic stack locally of finite type over $\C$. The functor $\Upsilon_f[-1]$ preserves $\Dd^b_{\mathrm{H},c}$, is perverse t-exact and commutes with the external tensor product.
\end{prop}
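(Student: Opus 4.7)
The plan is to deduce all three properties from the corresponding facts on finite type $\C$-schemes by smooth descent, exploiting the lax monoidality of $\Upsilon$ to commute it with smooth pullbacks, and then to invoke Saito's theory of the unipotent nearby cycles for mixed Hodge modules.

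First I would pick a smooth presentation $\pi\colon X\to\Xfrak$ of relative dimension $d$ and form the base changes $\pi_\Ufrak\colon U\to\Ufrak$ and $\pi_\Zfrak\colon Z\to\Zfrak$, so that $U$ and $Z$ are schemes. Because the natural transformation $\Upsilon$ is lax monoidal on $\mathrm{Corr}^{\mathrm{pr,sm}}_{\A^1_\C}$, smooth pullback of the base function commutes with it, yielding a canonical equivalence $\pi_\Zfrak^*\,\Upsilon_f\simeq \Upsilon_{f\circ\pi_\Ufrak}\,\pi_\Ufrak^*$. Now $\pi_\Zfrak^*$ is conservative, preserves $\Dd_H^b$, and becomes perverse t-exact after a shift by $d$; therefore both the preservation of $\Dd_H^b$ and the perverse t-exactness of $\Upsilon_f[-1]$ can be tested on $Z$, which reduces these two claims to the case of schemes.

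Over a finite type $\C$-scheme $X$, the functor $\Upsilon_f$ coincides with Saito's unipotent nearby cycles $\psi_f^{\mathrm{un}}$ on $\Dd^b(\mathrm{MHM}(X))$: both are obtained as $i^*j_*$ applied to twists by the logarithmic algebra $p_*\Q$, and on underlying perverse sheaves they yield the classical unipotent nearby cycles. The two desired properties are then Saito's foundational result that $\psi_f^{\mathrm{un}}[-1]$ is perverse t-exact and preserves $\Dd^b(\mathrm{MHM})$. Alternatively, if one prefers not to invoke this comparison, one can descend one step further to perverse sheaves via the Betti realisation $\mathrm{rat}\colon\Dd_H\to\Dd_B$, which is conservative and perverse t-exact on $\Dd_H^b$ and commutes with $i^*$, $j_*$, and tensor products, so that both statements follow from the well-known ones for $\Dd^b_c((-)^\mathrm{an},\Q)$.

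The third claim proceeds in the same spirit. The lax monoidal structure of $\Upsilon$ provides a canonical comparison map $\Upsilon_{f_1}(K_1)\boxtimes\Upsilon_{f_2}(K_2)\to\Upsilon_{f_1\times f_2}(K_1\boxtimes K_2)$, and one must show that it is an equivalence. Taking product presentations $X_1\times X_2\to\Xfrak_1\times\Xfrak_2$ and applying exactly the same smooth descent argument reduces the question to schemes, where it is a Thom--Sebastiani-type identity for unipotent nearby cycles of mixed Hodge modules, again known from Saito's theory or transportable from the Betti side by the conservativity of $\mathrm{rat}$ on bounded complexes. The main technical obstacle throughout is the identification on schemes of the Cass--van den Hove--Scholbach construction of $\Upsilon_f$ with Saito's $\psi_f^{\mathrm{un}}$; once this comparison is granted, all remaining verifications are routine consequences of the smooth-local nature of $\Dd_H^b$ and of the perverse t-structure on stacks.
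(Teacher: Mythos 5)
Your proposal is correct and, in its fallback form, is essentially the paper's proof: reduce to a scheme via a smooth presentation (using that $\Upsilon$ commutes with smooth pullback and that $\pi^*$ is conservative, preserves $\Dd_H^b$ and is perverse t-exact up to shift), then transport the statements through $\mathrm{rat}\colon\Dd_H\to\Dd_B$, which commutes with the operations and hence with $\Upsilon_f$, detects constructibility and is conservative on constructible objects, and conclude from Beilinson's unipotent nearby cycles on the Betti side (via \cite[Example 3.19]{cassCentralMotivesParahoric2024b}). One caution about your primary route: the identification of $\Upsilon_f$ with Saito's $\psi_{f,1}$ on $\Dd^b(\mathrm{MHM})$ is not available at this stage --- in the paper it is \Cref{compSaitoUsp}, whose proof uses the perverse t-exactness established here --- so invoking it to prove t-exactness would be circular in this logical order; the Betti/Beilinson argument you give as an alternative is the one to use.
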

\begin{proof}
  The functor $$\mathrm{rat}\colon\Dd_{\mathrm{H}}\to\Dd_B$$ commutes with $\Upsilon_f$ by \cite[Lemma 3.18]{cassCentralMotivesParahoric2024b} (note that in the proof only is needed right adjointability for $j^*$ for $j$ an open immersion which is in particular representable, so that we may use \Cref{propHB}). On top of this, it detects constructibility and is conservative on constructible objects.
  Moreover, by taking a presentation $\pi:X\to\Xfrak$ of $\Xfrak$, it suffices to prove the proposition for $\Upsilon_f$ with $f:X\to\A^1_\C$ a function on a scheme. Thus the proof of the proposition reduces to the case of a function of a scheme, on analytic sheaves. The comparison \cite[Example 3.19]{cassCentralMotivesParahoric2024b} of $\Upsilon$ in this case, with Beilinson's unipotent nearby cycles functor \cite{MR0923134} imply the result, because for analytic sheaves the functor $\Upsilon_f[-1]$ is perverse t-exact and commutes with the external tensor product.
  
\end{proof}

\begin{prop}Let $f:\Xfrak\to\A^1_\C$ be a function on a algebraic stack locally of finite type over $\C$.
  There is an exact triangle 
  $$\Upsilon_f[-1]\xrightarrow{N}\Upsilon_f(-1)[-1]\to i^*j_*.$$ 
\end{prop}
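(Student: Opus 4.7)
The plan is to reduce to a classical result of Beilinson via two standard reductions.

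First, one reduces to the case where $\Xfrak$ is a finite-type $\C$-scheme. Choose a smooth presentation $\pi\colon X\to\Xfrak$. Since $\Dd_H$ is an $h$-sheaf and $\pi$ is smooth and surjective, $\pi^*$ is conservative. By smooth base change, $\pi^*$ commutes with $i^*$, $j_*$, Tate twists, shifts, and, because $\Upsilon$ is constructed as a natural transformation of lax monoidal functors on the correspondence category $\mathrm{Corr}^{\mathrm{pr,sm}}_{\A^1_\C}$ (which contains all smooth morphisms), it also commutes with $\Upsilon_f$ and with its monodromy operator $N$. Hence the triangle over $\Xfrak$ follows from the analogous triangle over $X$.

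Second, one reduces to the analytic setting via $\mathrm{rat}\colon\Dd_H\to\Dd_B$: this functor commutes with the six operations and the $\Nilp$-construction of $\Upsilon$, it is conservative on constructible objects, and by the preceding proposition all three terms of the triangle are constructible when $K$ is. The problem is thereby reduced to the analogous statement in the Betti derived category over a scheme.

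Finally, in this last setting one invokes \cite[Example 3.19]{cassCentralMotivesParahoric2024b}, which identifies $\Upsilon_f[-1]$ with Beilinson's unipotent nearby cycle functor $\Psi_f^{\mathrm{un}}$ from \cite{MR0923134} and sends $N$ to the logarithm of the unipotent monodromy. Beilinson's construction produces exactly the cofiber sequence exhibiting $i^*j_*K$ as the cone of $N$, which, after matching the shifts and Tate twists, is the desired triangle. The main obstacle in this plan is the bookkeeping needed to align the conventions (shifts, twists, signs) between the $\Nilp$-formalism and Beilinson's original setup, together with the explicit identification of the natural map $\Upsilon_f(-1)[-1]\to i^*j_*$ coming out of Beilinson's comparison; no new geometric input beyond the already quoted comparison is required.
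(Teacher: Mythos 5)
There is a genuine gap: you treat the exact triangle as a \emph{property} to be verified after applying conservative functors, but the real content of the statement is the \emph{construction} of the data — the natural transformation $\Upsilon_f(-1)[-1]\to i^*j_*$, the nullhomotopy of the composite $\Upsilon_f[-1]\to i^*j_*$, and hence the comparison map $\mathrm{cofib}(N)\to i^*j_*$ — at the level of $\Dd_H$ on the stack $\Xfrak$. Conservative functors such as $\pi^*$ for a presentation and $\mathrm{rat}$ let you check that an already given map is an equivalence; they do not let you descend maps or homotopies from the Betti side over a scheme back to the Hodge side over a stack. Your plan produces the connecting map only "out of Beilinson's comparison", i.e.\ after realisation over a scheme, so the triangle you end up with lives in the wrong category. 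The paper's proof is instead purely formal: the triangle is \cite[Proposition 3.9]{cassCentralMotivesParahoric2024b} (shifted by $-1$), which is a consequence of the $\Nilp$-module description of $\Upsilon_f$ (tensoring the defining cofiber sequence for $\Q$ as a module over the square-zero extension), and this argument applies verbatim to the Hodge-theoretic sheaf theory extended to stacks — no reduction to schemes, no passage to $\mathrm{rat}$, and no input from \cite{MR0923134} is needed. Once you have the formal construction, the exactness comes with it, so the reductions you propose are not just insufficient, they are superfluous.

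A secondary point: even for the verification step, $\mathrm{rat}$ is only asserted to be conservative on constructible objects, while $\Upsilon_f$ and the triangle are stated on all of $\Dd_H(\Ufrak)$. Your argument as written therefore only covers constructible $K$; to reach general ind-objects you would additionally need to reduce to a scheme, invoke compact generation by constructible objects there, and check that all three functors (including $j_*$, via smooth base change along a presentation) preserve filtered colimits. This can be patched, but it is another reason the formal $\Nilp$-argument is the right route here; the comparison with Beilinson's functor is used in the paper only later, for perverse t-exactness and the comparison with Saito's functors, not for this triangle.
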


\begin{proof}
  This is \cite[Proposition 3.9]{cassCentralMotivesParahoric2024b}, shifted by $-1$.
\end{proof}

\begin{prop}
  \label{compSaitoUsp}
  Let $X$ be a reduced and separated finite type $\C$-scheme and let $f:X\to\A^1_\C$ be a function. Then there is a natural isomorphism of functors 
  $$\Upsilon_f[-1]\simeq \psi_{f,1}$$ on the triangulated category $\Dd^b(\mathrm{MHM}(U))$, where $U = f^{-1}(\Gm)$ and $\psi_{f,1}$ is the unipotent nearby cycle defined by Saito in \cite{MR1047415}.
\end{prop}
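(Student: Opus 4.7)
The plan is to compare both functors first at the level of underlying perverse sheaves, and then lift this comparison to mixed Hodge modules using the cofiber triangle of the preceding proposition. Both $\Upsilon_f[-1]$ and $\psi_{f,1}$ are perverse t-exact functors $\Dd^b(\mathrm{MHM}(U)) \to \Dd^b(\mathrm{MHM}(X_0))$, the first by the preceding proposition and the second by Saito's results in \cite{MR1047415}. Since $\Dd^b(\mathrm{MHM}(U))$ is the bounded derived category of its heart, a t-exact triangulated functor out of it is the trivial derivation of its restriction to the heart; hence it suffices to construct a natural isomorphism of the induced exact functors $\mathrm{MHM}(U) \to \mathrm{MHM}(X_0)$.

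At the level of underlying $\Q$-perverse sheaves, the composite $\mathrm{rat}\circ\Upsilon_f[-1]$ is naturally isomorphic to Beilinson's unipotent nearby cycle functor of \cite{MR0923134}: this follows from the comparison at the analytic level invoked in the proof of the previous proposition, namely \cite[Example 3.19]{cassCentralMotivesParahoric2024b}. By Saito's construction, $\mathrm{rat}\circ\psi_{f,1}$ is also Beilinson's unipotent nearby cycle functor on perverse $\Q$-sheaves. Composing these identifications yields a canonical natural isomorphism $\alpha\colon \mathrm{rat}\circ\Upsilon_f[-1] \simeq \mathrm{rat}\circ\psi_{f,1}$. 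Since the functor $\mathrm{rat}\colon \mathrm{MHM}(X_0) \to \mathrm{Perv}(X_0,\Q)$ is faithful, any lift of $\alpha$ to a natural transformation in $\mathrm{MHM}$ is unique, and if an isomorphism lifts, it is an isomorphism.

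To produce the lift, I would match the two cofiber triangles. On one side, the previous proposition gives the sequence $\Upsilon_f[-1] \xrightarrow{N} \Upsilon_f(-1)[-1] \to i^*j_*$ in $\Dd^b(\mathrm{MHM}(X_0))$. On the other side, Saito's construction equips $\psi_{f,1}$ with a monodromy operator $N_S$ fitting into the analogous cofiber sequence $\psi_{f,1} \xrightarrow{N_S} \psi_{f,1}(-1) \to i^*j_*$. The third term $i^*j_*$ is intrinsic and common to both; its map from the unipotent part on each side becomes the Beilinson ``logarithm'' map after applying $\mathrm{rat}$, hence the two maps agree as morphisms in $\mathrm{MHM}$ by faithfulness of $\mathrm{rat}$. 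By universality of the fiber in $\Dd^b(\mathrm{MHM}(X_0))$, $\alpha$ then extends to a natural isomorphism $\Upsilon_f[-1] \simeq \psi_{f,1}$.

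The main obstacle is the identification of the two monodromy operators $N$ and $N_S$ as morphisms in $\mathrm{MHM}$, since they are produced by very different mechanisms: the $\Nilp$-module structure and the logarithm of the $\Gm$-action for $N$, and Saito's $V$-filtration machinery for $N_S$. By faithfulness of $\mathrm{rat}$ it is enough to check this agreement at the level of $\Q$-perverse sheaves, where both operators reduce to the logarithm of the unipotent part of Beilinson's monodromy. Alternatively, one may first verify the comparison on pure Hodge modules, where semi-simplicity and strictness of the weight filtration rigidify the lift, and then extend by dévissage along the weight filtration to the entire heart.
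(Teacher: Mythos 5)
Your reduction to the heart via t-exactness matches the paper's first step, and the identification of both functors with Beilinson's construction \emph{after applying} $\mathrm{rat}$ is correct. The gap is the lifting step. Faithfulness of $\mathrm{rat}$ gives \emph{uniqueness} of a lift of your isomorphism $\alpha$, not \emph{existence}: $\mathrm{rat}\colon\mathrm{MHM}(f^{-1}(0))\to\mathrm{Perv}(f^{-1}(0),\Q)$ is faithful but far from full, so an isomorphism between underlying perverse sheaves need not be $\mathrm{rat}$ of any morphism of mixed Hodge modules, and two Hodge modules with isomorphic rationalisations need not be isomorphic. Your mechanism for producing the lift does not repair this: in the triangle $\Upsilon_f[-1]\xrightarrow{N}\Upsilon_f(-1)[-1]\to i^*j_*$ the shared term $i^*j_*$ is the \emph{cone}, and the two maps you want to match, $\Upsilon_f(-1)[-1]\to i^*j_*$ and $\psi_{f,1}(-1)\to i^*j_*$, have different sources --- precisely the objects you are trying to identify --- so the appeal to faithfulness of $\mathrm{rat}$ to say ``the two maps agree in $\mathrm{MHM}$'' does not typecheck. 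Moreover taking fibres is not functorial in a triangulated category, and even in the stable $\infty$-categorical enhancement you would first need a commutative square in $\mathrm{MHM}$, i.e. the very comparison map you are constructing. The same circularity affects checking $N=N_S$ ``on $\Q$-structures'', and the suggested dévissage along the weight filtration is not an argument: a natural isomorphism is not assembled from its action on weight-graded pieces without controlling extensions.

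The paper avoids the lifting problem entirely by producing the isomorphism \emph{inside} mixed Hodge modules: by \cite[Proposition 1.3]{MR1047741} one has $\psi_{f,1}K\simeq\colim_n\HHp^{-1}i^*j_*(K\otimes f^*E_n)$ for $K\in\mathrm{MHM}(U)$, where the $E_n$ are the unipotent Jordan-block objects, which exist as mixed Hodge modules; and the proof of \cite[Example 3.19]{cassCentralMotivesParahoric2024b} shows $\Upsilon_f(K)\simeq\colim_n i^*j_*(K\otimes f^*E_n)$ in $\Dd_H$. Compatibility of the t-structure with filtered colimits then yields $\Upsilon_f[-1]\simeq\psi_{f,1}$ on the heart, hence everywhere, with no descent to perverse sheaves and no lifting. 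To save your outline you would need either such a direct construction of the comparison morphism in $\mathrm{MHM}$, or a fullness statement for $\mathrm{rat}$ on the relevant Hom groups, neither of which is available; agreement at the level of $\Q$-structures alone is not enough.
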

\begin{proof}
  As both functors are t-exact it suffices to construct an isomorphism on $\mathrm{MHM}(U)$. By \cite[Proposition 1.3]{MR1047741} for $K\in\mathrm{MHM}(U)$ there is a canonical isomorphism 
  $$\colim_n\HHp^{-1}i^*j_*(K\otimes  f^*E_n)\xrightarrow{\sim} \psi_{f,1}K,$$
  where $E_n = \Q\oplus\cdots \Q(-n)\in\D_\mathrm{H}(\Gm)$.
  The proof of the comparison with Beilinson construction \cite[Example 3.19]{cassCentralMotivesParahoric2024b} gives in fact that 
  $$\Upsilon_f(K)\simeq \colim_n i^*j_*(K\otimes f^*E_n),$$ so that the compatibility with colimits of the t-structure gives the result.
\end{proof}

\begin{cor}
  On $\Dd^b_{\mathrm{H},c}$, the natural map $$ \Upsilon_f\circ \mathbb{D}_{\Ufrak}(-)\to \mathbb{D}_{\Zfrak}\circ\Upsilon_f(-)(1)[2].$$ is an equivalence.
\end{cor}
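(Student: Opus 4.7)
The plan is to reduce, via a smooth presentation, to the case where $\Xfrak$ is a finite type $\C$-scheme, and then invoke \Cref{compSaitoUsp} together with M.~Saito's compatibility of $\psi_{f,1}$ with Verdier duality.

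Pick a smooth presentation $\pi\colon X\to \Xfrak$ of some relative dimension $d$, and denote by $\pi_\eta\colon U\to \Ufrak$ and $\pi_s\colon Z\to \Zfrak$ its base changes along $j$ and $i$, still smooth of relative dimension $d$. By the lax monoidal compatibility of $\Upsilon$ with smooth pullback (part of its construction as a natural transformation on $\mathrm{Corr}^{\mathrm{pr},\mathrm{sm}}_{\A^1_\C}$), we have a canonical equivalence $\pi_s^*\Upsilon_f\simeq \Upsilon_{f\circ\pi}\circ\pi_\eta^*$. By \Cref{Ddualliss}, each smooth pullback commutes with Verdier duality up to the same Tate twist and shift $(d)[2d]$. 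Since $\pi_s^*$ is conservative on $\Dd_H^b$ (as $\pi_s$ is a presentation of $\Zfrak$), applying $\pi_s^*$ to the natural transformation $\Upsilon_f\circ\mathbb{D}_\Ufrak\to\mathbb{D}_\Zfrak\circ\Upsilon_f$ transports it, up to an invertible twist-shift, to the corresponding natural transformation for the function $f\circ\pi$ on the scheme $X$. Hence it suffices to prove the statement in the scheme case.

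On a scheme, \Cref{compSaitoUsp} identifies $\Upsilon_f[-1]$ with Saito's unipotent nearby cycles functor $\psi_{f,1}$ on $\Dd^b(\mathrm{MHM}(U))$, and Saito's $\psi_{f,1}$ is known to commute with Verdier duality. Both sides $\Upsilon_f\circ\mathbb{D}_U$ and $\mathbb{D}_Z\circ\Upsilon_f$ preserve $\Dd_H^b$ and are perverse t-exact up to a shift; so the natural transformation is an equivalence if and only if it is so on the heart $\mathrm{MHM}(U)$. There both functors coincide with $\mathbb{D}\circ\psi_{f,1}[1]$, and the abstract natural map in question, being assembled from $\mathbb{D} i^*\simeq i^!\mathbb{D}$, $\mathbb{D} j_*\simeq j_!\mathbb{D}$, the canonical arrow $j_!\to j_*$, and the self-duality (up to twist) of the Kummer local systems $E_n$ appearing in the colimit formula for $\Upsilon_f$, is identified with Saito's duality isomorphism.

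The main obstacle is precisely this last identification of the $\infty$-categorical natural map (produced formally from the 6-functor formalism and the definition $\Upsilon_f = \colim_n i^*j_*(- \otimes f_\eta^* E_n)$) with Saito's classical duality map on the perverse heart. Once this compatibility is secured at the level of the heart, perverse t-exactness propagates the equivalence to all of $\Dd_H^b$, completing the proof.
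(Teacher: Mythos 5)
Your strategy for the \emph{verification} step is the paper's: the proof there also checks the map smooth-locally, reducing to a function on a finite type $\C$-scheme, and then quotes Saito (or, alternatively, applies $\mathrm{rat}$ and quotes Beilinson). However, your proposal has a genuine gap, and in fact two. First, the natural map is never constructed, while its construction is the main content of the paper's proof: one uses the lax monoidality of $\Upsilon$ to produce $\Upsilon_f(\mathbb{D}_\Ufrak(-))\otimes\Upsilon_f(-)\to\Upsilon_f(\mathbb{D}_\Ufrak(-)\otimes-)$, composes with the evaluation map $\mathbb{D}_\Ufrak(-)\otimes(-)\to f_\eta^!\Q$, with the exchange map $\Upsilon_f\circ f_\eta^!\to f_s^!\circ\Upsilon_{\mathrm{Id}}$ and with the equivalence $\Upsilon_{\mathrm{Id}}(\Q)\simeq\Q$ (the Hodge realisation of \cite[Lemma 3.12]{cassCentralMotivesParahoric2024b}), and then passes to the adjoint. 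Your guess that the map is ``assembled from $\mathbb{D} i^*\simeq i^!\mathbb{D}$, $\mathbb{D} j_*\simeq j_!\mathbb{D}$, $j_!\to j_*$ and self-duality of the $E_n$'' is not that construction, and without pinning the map down the statement to be proved has no definite content.

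Second, you flag yourself that the crux --- identifying the abstract map, restricted to the heart, with Saito's duality isomorphism --- is left unproved (``once this compatibility is secured''). That is precisely the non-formal input: knowing that $\Upsilon_f\circ\mathbb{D}_\Ufrak$ and $\mathbb{D}_\Zfrak\circ\Upsilon_f$ are abstractly isomorphic (via \Cref{compSaitoUsp} and Saito's theorem) does not make the given natural transformation invertible. Note that the paper's parenthetical alternative sidesteps exactly this identification: since $\mathrm{rat}$ commutes with the six operations and with $\Upsilon$, and is conservative on $\Dd_H^b$, it suffices to show that the image of the map in analytic sheaves is invertible, where the corresponding statement is available for Beilinson's construction, matched with $\Upsilon$ by the comparison cited in \Cref{compSaitoUsp}. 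Your reduction to schemes (smooth-pullback compatibility of $\Upsilon$, \Cref{Ddualliss}, conservativity of $\pi_s^*$) and the dévissage from the heart via perverse t-exactness are sound; what is missing is the definition of the map and the invertibility input on the heart, which your proposal acknowledges but does not supply.
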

\begin{proof}
  The map is defined as follows:
  $\Upsilon_f$ is lax monoidal hence there is a canonical map 
  \[\Upsilon_f \circ \sHom(-,f^!_\eta\Q)\otimes \Upsilon_f(-)\to \Upsilon_f(\sHom(-,f^!_\eta\Q)\otimes (-))\] that we can compose with the evaluation map 
  \[\sHom(-,f^!_\eta\Q)\otimes (-)\to \Upsilon_f(f^!_\eta\Q)\] to obtain a map 
  \[\Upsilon_f \circ \sHom(-,f^!_\eta\Q)\otimes \Upsilon_f(-)\to f^!_\eta\Q.\]
  By adjunction, this provides a map 
  \[\Upsilon_f\circ\sHom(-,f^!_\eta\Q)\to \sHom(\Upsilon_f(-),\Upsilon_f(f^!_\eta\Q)).\]
  Now, the commutation of $\Upsilon$ with $!$-pushforwards implies that there is a exchange map \[\Upsilon_f\circ f_\eta^!\to f_\sigma^!\circ\Upsilon_{\mathrm{Id}},\] and by \cite[Lemma 3.12]{cassCentralMotivesParahoric2024b} (to which we apply the Hodge realisation), we have that $\Upsilon_\mathrm{Id}(\Q)\simeq \Q$.
  This gives a map 
  \[ \Upsilon_f\circ\sHom(-,f^!_\eta\Q)\to \sHom(\Upsilon_f(-),f^!_\sigma \Q)\] in which in the right hand side we recognise (because $f_\sigma$ is the structural map of $\Zfrak$) the functor $\mathbb{D}_\Zfrak\circ \Upsilon_f$. 
  The dualising object on $\Ufrak$ is $\omega_\Ufrak = f_\eta^!\pi_{\Gm}^!\Q \simeq f_\eta^!\Q(1)[2]$, with $\pi_\Gm$ the structural map of $\Gm$. Using that $\Upsilon_f$ commutes with Tate twists (this follows easily from the projection formula and the fact that it commutes with $p_!$, for $p\colon \Xfrak\times \mathbb{P}^1\to\Xfrak$), we finally have a map 
  \[\Upsilon_f\circ\mathbb{D}_\Ufrak(-) \to \mathbb{D}_\Zfrak\circ\Upsilon_f(-)(1)[2].\]
  This comparison map is an isomorphism on $\D^b_{\mathrm{H}}$ because this can be checked locally hence once finite type $\C$-schemes, where it has been proven by Saito (alternatively, one can apply the functor to analytic sheaves and use the result of Beilinson).
\end{proof}
Now, for each $n\in \N^*$ we may consider $\pi_n$ the elevation to the $n$-th power in $\A^1_\C$, and form the cartesian square
$$\begin{tikzcd}
    \Xfrak_n \arrow[r,"e_n"] \arrow[d,"f_n"]
        \arrow[dr, phantom, very near start, "{ \lrcorner }"]
      & \Xfrak \arrow[d,"f"] \\
    \A^1_\C \arrow[r,"\pi_n"]
      & \A^1_\C
\end{tikzcd}.$$
Denote by $i_n\colon f_n^{-1}(\{0\})\to f^{-1}(\{0\})$ be the nil-immersion obtained by restricting $e_n$ (the functor $(i_n)_*$ is an equivalence).
Then by functoriality of $\Upsilon$ obtain a system of functors $((i_n)_*\circ\Upsilon_{f_n}\circ e_n^*)_{n\in(\N^*)^\op}$, where we still denote by $e_n$ the restriction of $e_n$ to the inverse image of $\Gm$.
\begin{defi}
  The total nearby cycle functor is the functor 
  $$\Psi_f \colon \Dd_{\mathrm{H}}(\Ufrak)\to\Dd_{\mathrm{H}}(\Zfrak)$$ defined as 
  $$\Psi_f:=\colim_{n\in(\N^*)^\op}(i_n)_*\Upsilon_{f_n}\circ e_n^*.$$
\end{defi}
\begin{rem}
  This definition in the case of motives is due to Ayoub (\cite{MR2438151}), but its formal description as a colimit can be found in Preis' \cite{preisMotivicNearbyCycles2023}.
\end{rem}

\begin{prop}
  Let $f:X\to \A^1_\C$ be a function on a reduced and separated finite type $\C$-scheme. Then there is a natural equivalence of triangulated functors 
  $$\Psi_f[-1]\simeq \Psi_{f}^S$$ on $\Dd^b(\mathrm{MHM}(U))$, where $\Psi_f^S$ is Saito's nearby cycle functor. In particular, $\Psi_f$ preserves constructible objects.
\end{prop}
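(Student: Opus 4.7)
The strategy is to compare the two functors termwise in the diagram whose colimit defines $\Psi_f$, using \Cref{compSaitoUsp} as the key input, and then to invoke a classical identification of Saito's total nearby cycles with a colimit of its unipotent analogues along the $n$-th power covers.

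First I would recall the decomposition of $\Psi_f^S$ according to monodromy eigenvalues. For any $K\in\Dd^b(\mathrm{MHM}(U))$, the monodromy on $\Psi_f^S(K)$ is quasi-unipotent, so one has a canonical decomposition $\Psi_f^S(K)\simeq\bigoplus_{\lambda}\psi_{f,\lambda}(K)$ indexed by the eigenvalues $\lambda$, which are all roots of unity. Pulling back along $\pi_n\colon\A^1\to\A^1$, $t\mapsto t^n$, makes the monodromy eigenvalues $\lambda$ with $\lambda^n=1$ unipotent, so $\psi_{f_n,1}(e_n^*K)$ captures exactly the summands $\psi_{f,\lambda}(K)$ with $\lambda\in\mu_n$. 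Passing to the filtered colimit over the divisibility order on $\N^*$ gathers all roots of unity, yielding a natural equivalence
$$\Psi_f^S\simeq\colim_{n\in(\N^*)^{\op}}\psi_{f_n,1}\circ e_n^*.$$
This identity is due to Beilinson in the perverse setting and extends to mixed Hodge modules by M.~Saito; it can also be read off from the proof of \Cref{compSaitoUsp} via \cite{MR1047741}, where Saito describes $\Psi_f^S$ in terms of $i^*j_*$ twisted by the pro-system of unipotent local systems on $\Gm$ underlying the $e_n$.

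Next, for each $n\in\N^*$ apply \Cref{compSaitoUsp} to the function $f_n\colon X_n\to\A^1_\C$. This gives a natural equivalence of triangulated functors $\Upsilon_{f_n}[-1]\simeq\psi_{f_n,1}$ on $\Dd^b(\mathrm{MHM}(U_n))$. Precomposing with $e_n^*$ and arranging these into the diagram indexed by $(\N^*)^{\op}$, the main remaining point is compatibility with the transition maps: for $m\mid n$, the factorisation of $\pi_n$ through $\pi_m$ gives a morphism $X_n\to X_m$ over $\A^1$ and induced natural transformations $\Upsilon_{f_m}\circ e_m^*\to\Upsilon_{f_n}\circ e_n^*$ and $\psi_{f_m,1}\circ e_m^*\to\psi_{f_n,1}\circ e_n^*$. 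Both transition maps arise from base change and the functoriality of the respective unipotent nearby cycle functors along finite flat maps over $\A^1$; the naturality of \Cref{compSaitoUsp}, which itself comes from Beilinson's comparison together with the lax monoidal functoriality of $\Upsilon$ on $\mathrm{Corr}^{\mathrm{pr,sm}}_{\A^1_\C}$, ensures that these transition maps agree under the level-$n$ equivalence.

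Taking the colimit, this yields
$$\Psi_f[-1]=\colim_{n\in(\N^*)^{\op}}\Upsilon_{f_n}[-1]\circ e_n^*\simeq\colim_{n\in(\N^*)^{\op}}\psi_{f_n,1}\circ e_n^*\simeq\Psi_f^S$$
on $\Dd^b(\mathrm{MHM}(U))$. Since Saito's $\Psi_f^S$ takes values in $\Dd^b(\mathrm{MHM}(Z))$, the equivalence transports this back: $\Psi_f$ preserves constructible objects, giving the ``in particular''.

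The step I expect to be most delicate is not the level-$n$ comparison itself (that is \Cref{compSaitoUsp}) but the coherence of the transition maps along the divisibility system, since \Cref{compSaitoUsp} is formulated at the triangulated level. If one wants a genuinely $\infty$-categorical equivalence of functors, one should either upgrade \Cref{compSaitoUsp} to a natural transformation of functors out of $\mathrm{Corr}^{\mathrm{pr,sm}}_{\A^1_\C}$ (which is how $\Upsilon$ is built in any case, so this is essentially available from the Cass--van den Hove--Scholbach setup) and then evaluate at the tower $(X_n\to X_m)_{m\mid n}$, or one can be content, as the statement allows, with the triangulated statement, in which case the compatibility reduces to checking one hexagon of natural transformations for each pair $m\mid n$, which is a direct unwinding of the definitions.
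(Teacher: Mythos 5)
Your argument is correct in substance and rests on the same two pillars as the paper's own proof --- \Cref{compSaitoUsp} together with Saito's identification of the total nearby cycle with unipotent nearby cycles along the covers $\pi_n$ --- but it is organised differently. The paper first reduces to the abelian heart: $\Psi_f$ is perverse t-exact because it is a filtered colimit of the t-exact functors $\Upsilon_{f_n}[-1]\circ e_n^*$ and the t-structure on $\Dd(\Ind\mathrm{MHM}(U))$ is compatible with filtered colimits, so it suffices to produce an isomorphism of exact functors on $\mathrm{MHM}(U)$; this dissolves the triangulated/$\infty$-categorical coherence of the transition maps that you single out as the delicate point, since for a fixed $M$ in the heart the tower stabilises for $n$ sufficiently divisible. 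Second, instead of your monodromy-eigenvalue decomposition and a levelwise application of \Cref{compSaitoUsp} along the tower, the paper invokes \Cref{compSaitoUsp} only once, for $f$ itself, and combines Saito's formula \cite[Proposition 5.7]{saitoFormalismeMixedSheaves2006}, namely $\Psi^S_f(M)=\psi_{f,1}(M\otimes f^*(\pi_n)_*\Q)$ for $n$ divisible enough, with the identification $\Upsilon_{f_n}\circ e_n^*\simeq \Upsilon_f\circ (e_n)_*\circ e_n^*\simeq \Upsilon_f\bigl(-\otimes f^*(\pi_n)_*\Q\bigr)$ obtained from properness of $e_n$, proper base change and the projection formula; note that your intermediate identity $\Psi^S_f\simeq\colim_n\psi_{f_n,1}\circ e_n^*$ is equivalent to Saito's formula by exactly this computation, so the mathematical content coincides and the difference is where the bookkeeping is done. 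Two small points if you keep your route: over $\Q$-coefficients the individual eigenvalue pieces $\psi_{f,\lambda}$ are not defined separately (only the grouping by Galois orbits of roots of unity, equivalently by the $n$-torsion part of the semisimple monodromy), so the direct sum decomposition should be stated in that form; and the compatibility of your levelwise isomorphisms with the transition maps is most painlessly checked after the t-exactness reduction to the heart, which you could also perform and thereby avoid the coherence verification you describe.
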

\begin{proof}
  First, because the t-structure is compatible with colimits and each $e_n^*$ are perverse t-exact, it is clear that $\Psi_f[-1]$ is perverse t-exact on $\Dd(\Ind\mathrm{MHM}(U))$. Thus, by dévissage it suffices to construct an isomorphism $\Psi_f[-1]\simeq \Psi_f^S$ of exact functors on the heart $\mathrm{MHM}(U)$. 

  By definition (\cite[Proposition 5.7]{saitoFormalismeMixedSheaves2006}) together with the comparison of \Cref{compSaitoUsp}, for $M\in\mathrm{MHM}(U)$ we have 
  $$\Psi_f^S(M)=\Upsilon_f(M)[-1]=\Upsilon_f(M\otimes f_n^*(\pi_n)_*\Q)[-1]$$ for $n$ divisible enough, in other terms, $$\Psi_f^S = \colim_n\Upsilon_f(-\otimes f_n^*(\pi_n)_*\Q)[-1].$$

  For each $n\in\N^*$ because $e_n$ is proper we have $(i_n)_*\Upsilon_{f_n}\simeq \Upsilon_f\circ (e_n)_*$ and now the proper projection formula and proper base change give $(e_n)_*\simeq (-\otimes f_n^*(\pi_n)_*\Q)$, finishing the proof.
\end{proof}
\begin{thm}
  \label{nearbcycles}
  Let $f:\Xfrak\to \A^1_\C$ be a function on an algebraic stack locally of finite type over $\C$.
  \begin{enumerate}
  \item The functor $$\Psi_f[-1]\colon \Dd_{\mathrm{H}}(\Ufrak)\to\Dd_{\mathrm{H}}(\Zfrak)$$
  preserves the full subcategory $\Dd^b_{\mathrm{H},c}$, is perverse t-exact and lax monoidal.
  \item Given another function $g:\Yfrak\to\A^1_\C$, the natural map 
  $$\Psi_f(-)\boxtimes \Psi_g(-)\to \Psi_{f\times g}(-\boxtimes -)$$ is an equivalence. 
  \item On $\Dd_{\mathrm{H},c}^+(\Ufrak)$ the natural transformation $$\Psi_f\circ\mathbb{D}_\Ufrak(-)\to\mathbb{D}_\Zfrak\circ \Psi_f(-)(1)[2]$$ is an equivalence.
  \end{enumerate}
\end{thm}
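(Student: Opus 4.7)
My plan is to reduce all three statements to the corresponding statements for $\Psi_f$ on schemes that were established just above, using a single auxiliary ingredient: smooth base change for $\Psi_f$ itself. First I would establish that for any smooth morphism $\pi\colon \Xfrak'\to\Xfrak$ with restrictions $\pi_\Ufrak,\pi_\Zfrak$ to the generic and special fibers of $f$, the canonical map
$$\pi_\Zfrak^*\circ \Psi_f\xrightarrow{\sim} \Psi_{f\circ\pi}\circ \pi_\Ufrak^*$$
is an equivalence. This follows by combining the corresponding smooth base change for each $\Upsilon_{f_n}$ (which is part of the lax monoidal natural transformation structure of $\Upsilon$ on $\mathrm{Corr}_{\A^1_\C}^{\mathrm{pr,sm}}$) with the fact that $\pi^*$ commutes with the defining filtered colimit.

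With this in hand, for part (1) I would fix a smooth presentation $\pi\colon X\to\Xfrak$ of relative dimension $d$. Since $\pi_\Zfrak^*[d]$ is perverse t-exact, conservative and detects boundedness, smooth base change immediately reduces the preservation of $\Dd_H^b$ and the perverse t-exactness of $\Psi_f[-1]$ to the corresponding statements for $\Psi_{f\circ\pi}[-1]$ on the scheme $X$, which were established via the Saito comparison just above. The lax monoidal structure on $\Psi_f[-1]$ comes from that of each $\Upsilon_{f_n}[-1]$ (itself part of $\Upsilon$ being a lax monoidal natural transformation) assembled along the colimit in $n$.

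For part (2), I would construct the Künneth map as the colimit over $n$ of the maps $\Upsilon_{f_n}\boxtimes \Upsilon_{g_n}\to \Upsilon_{(f\times g)_n}$ provided by the lax monoidality of $\Upsilon$, and check it is an equivalence after applying smooth presentations of $\Xfrak$ and $\Yfrak$. This reduces once again to the scheme case, where the result either follows from the lax monoidality of $\Upsilon_f[-1]$ combined with the Saito comparison, or is taken directly from the classical Künneth formula for $\Psi_f^S$.

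For part (3), I would construct the duality natural transformation exactly as in the preceding corollary for $\Upsilon_f$: at each level $n$, the lax monoidality of $\Upsilon_{f_n}$ together with the normalisation $\Upsilon_{\mathrm{Id}}(\Q)\simeq \Q$ produces by adjunction a map $\Upsilon_{f_n}\circ\mathbb{D}\to \mathbb{D}\circ \Upsilon_{f_n}$; these are compatible in $n$ and give the desired map $\Psi_f\circ\mathbb{D}_\Ufrak\to \mathbb{D}_\Zfrak\circ\Psi_f$ after taking the colimit in $n$. To check it is an equivalence on $\Dd_{H,c}(\Ufrak)$ I would again reduce to the scheme case via a smooth presentation, using \Cref{Ddualliss} to commute smooth pullback with duality (up to a shift and a Tate twist). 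The main obstacle is precisely here: Verdier duality does not commute with filtered colimits in general, so the reduction must be carried out with care. However, on constructible objects the comparison with Saito's $\Psi^S$ on schemes implies that the colimit defining $\Psi_f$ is essentially constant after a smooth presentation, which is exactly what is needed to justify interchanging colimit and duality in the relevant range.
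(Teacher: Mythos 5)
Your proposal follows essentially the same route as the paper: establish that $\Psi_f$ commutes with smooth pullback (inherited from the corresponding property of $\Upsilon$ and commutation of $\pi^*$ with the colimit over $n$), reduce all three statements smooth-locally to separated finite type $\C$-schemes, and conclude there from the comparison with Saito's functor and the underlying perverse sheaves, treating the duality statement on $\Dd_{H,c}$ via the bounded constructible case (where the defining colimit stabilises) together with perverse t-exactness. Your extra care about interchanging Verdier duality with the filtered colimit is a valid elaboration of what the paper compresses into ``the result on the bounded category and t-exactness,'' not a different argument.
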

\begin{proof}
  Thanks to the same property for $\Upsilon$ the functor $\Psi$ commutes with smooth pullback, hence all those properties are local for the smooth topology and can be checked on a separated finite type $\C$-scheme, where they follow from the results on the underlying perverse sheaves and the comparison with Saito's functor. The last assertion follows from the result on the bounded category and t-exactness.
\end{proof}

We finish this section with the remark that because taking cones is functorial in the world of stable $\infty$-categories, we have a vanishing cycles functors for free:
\begin{defi}
  Let $f:\Xfrak\to \A^1_\C$ be a function on an algebraic stack locally of finite type over $\C$. We define the vanishing cycle functor 
  $$\Phi_f\colon \Dd_{\mathrm{H}}(\Xfrak)\to\Dd_{\mathrm{H}}(\Zfrak)$$ as the cone of the natural map $i^*\to \Psi_fj^*$.
  We thus have an exact triangle 
  $$ i^*\to \Psi_fj^*\xrightarrow{\mathrm{can}} \Phi_f.$$
\end{defi}
\begin{prop}
  The functor $$\Phi_f[-1]\colon \Dd_{\mathrm{H}}(\Xfrak)\to\Dd_{\mathrm{H}}(\Zfrak)$$ preserves $\Dd^b_{\mathrm{H},c}$, is perverse $t$-exact and commutes with Verdier duality up to a twist $-(1)$. 
\end{prop}
\begin{proof}
  This can be checked on a smooth atlas, thus on schemes where this holds for in constructible sheaves $\Dd^b_c(-,\Q)$.
\end{proof}
\subsubsection{Comparison with existing constructions}
In this section, we compare our construction to the construction of Achar \cite{acharEQUIVARIANTMIXEDHODGE} that dealt with equivariant mixed Hodge modules, as well as with Davison's \cite{davisonPurity2CalabiYauCategories2024} in which they constructed pushforwards of the constant object under a morphism from a stack.

In Achar's \cite{acharEQUIVARIANTMIXEDHODGE}, the construction goes as follows: $G$ is an affine algebraic group acting on a complex algebraic variety $X$, and he defines a triangulated category $\Dd_G(X)$ of $G$-equivariant mixed Hodge modules on $X$. This triangulated has a perverse t-structure, whose heart is indeed the abelian category of $G$-equivariant mixed Hodge modules. The definition is the following: for a special Grothendieck topology called the \emph{acyclic topology} in which covering are smooth $G$-equivariant morphism satisfying some cohomological vanishing (it is asked that for some $n$ and universally, $f$ satisfies that $\tau^{\leqslant n}f_*f^*$ is the identity when restricted to the abelian category of mixed Hodge modules), it turns out that the presheaf of triangulated categories 
$U\mapsto \mathrm{ho}\Dd^b(\mathrm{MHM}(X))$ is a sheaf, and as $X$ admits a covering in the acyclic topology by a $U$ on which $G$ acts freely (so that $[U/G]$ is a scheme), this sheaf canonically extends to $G$-equivariant varieties. 
\begin{prop}
Let $X$ be a $G$-variety. Then the homotopy category of $\Dd^b_{\mathrm{H},c}([X/G])$ coincides with Achar's category $\Dd_G^b(X)$.
\end{prop}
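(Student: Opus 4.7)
The plan is to identify both categories with (the homotopy category of) the limit of the same simplicial diagram of bounded derived categories of mixed Hodge modules, built from a Totaro-style resolution of the stack $[X/G]$. Since $G$ is affine, one can choose a linear $G$-representation $V$ together with an open $V^0 \subset V$ on which $G$ acts freely, with $V \setminus V^0$ of codimension larger than some prescribed integer $N$. Setting $U := X \times V^0$ with the diagonal action, the projection $p\colon U \to X$ is $G$-equivariant, smooth, and $N$-acyclic in Achar's sense, and $[U/G]$ is an algebraic space because $G$ acts freely on $U$.

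For our construction, $[U/G] \to [X/G]$ is a smooth representable atlas, and the $h$-hyperdescent built into the definition of $\Dd_H^b$ yields
$$\Dd_H^b([X/G]) \simeq \lim_{[m] \in \Delta} \Dd^b\bigl(\mathrm{MHM}([U^{(m+1)/X}/G])\bigr),$$
with transition functors given by smooth pullbacks, which are t-exact up to a shift for the ordinary t-structure. For Achar, the descent theorem on the acyclic topology applied to $p$ identifies $\Dd_G^b(X)$ with the analogous limit taken at the level of ordinary triangulated categories and truncated to a finite simplicial stage, the truncation being harmless thanks to the $N$-acyclicity of $p$.

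To reconcile the two, the key input is that $N$-acyclicity combined with t-exactness of smooth pullbacks makes the $\infty$-categorical limit over $\Delta$ degenerate, on bounded complexes of fixed amplitude, into a \emph{finite} homotopy limit. Such a finite limit of bounded derived categories with nice transitions can be computed at the level of ordinary triangulated categories, and the resulting limit is, by construction, Achar's category. The main obstacle is precisely this matching between the $\infty$-categorical and the strict triangulated limits; once reduced to a finite truncation via the $N$-acyclicity of the Totaro cover, both categories are assembled from the same data with the same transition functors, which forces the homotopy category of $\Dd_H^b([X/G])$ to agree with $\Dd_G^b(X)$.
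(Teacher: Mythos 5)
Your geometric setup (Totaro-style free, $N$-acyclic resolutions $U = X\times V^0$) is the right one, and the reduction of boundedness questions to a fixed amplitude $[a,b]$ with $b-a$ small relative to the acyclicity is also the right instinct. But the pivotal step of your argument is asserted, not proved, and it is exactly the delicate point: you claim that once the simplicial limit is (a) degenerated to a finite stage and (b) of bounded amplitude, it ``can be computed at the level of ordinary triangulated categories'', and that Achar's $\Dd_G^b(X)$ \emph{is} that triangulated limit. Neither holds as stated. Passing to homotopy categories does not commute with limits of $\infty$-categories, even finite ones (this failure of descent for triangulated categories is the very reason the $\infty$-categorical enhancement is needed in this paper), and Achar's category is not defined as a limit of triangulated categories over a truncated \v{C}ech diagram: it is defined via the sheaf property for the acyclic topology, i.e.\ ultimately as a full subcategory of $\Dd^{[a,b]}(\mathrm{MHM}([U/G]))$ cut out using $n$-acyclic free resolutions. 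So the ``matching between the $\infty$-categorical and the strict triangulated limits'', which you yourself identify as the main obstacle, is left entirely unaddressed; moreover the claimed degeneration of the totalization over $\Delta$ to a finite homotopy limit in bounded amplitude is itself a statement that needs proof (via coconnectivity estimates), not a formal consequence of t-exactness of the pullbacks.

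The way to close the gap is to compare the two categories through a single stage of the resolution rather than through limits: show, by repeating Achar's Lemma 8.1 (the Bernstein--Lunts argument) inside $\Dd_H$, that for an $n$-acyclic $U\to X$ with free $G$-action and $b-a<n$ the pullback $\Dd_H^{[a,b]}([X/G])\to\Dd_H^{[a,b]}([U/G])$ is fully faithful; since $[U/G]$ is a scheme (or algebraic space), the target is the same in both frameworks, and this yields a fully faithful comparison functor $\mathrm{ho}\,\Dd_H^b([X/G])\to\Dd_G^b(X)$. Essential surjectivity then does not come from any limit comparison but from identifying the hearts of the two t-structures with the abelian category of $G$-equivariant mixed Hodge modules (which does satisfy descent, being an abelian, hence $1$-categorical, datum) and a dévissage on amplitude. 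This is the route the paper takes; your proposal has the right ingredients ($N$-acyclic free covers, bounded amplitude) but substitutes an unjustified bridging principle for the actual fully-faithfulness-plus-hearts argument.
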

\begin{proof}
  First, if $G$ acts freely on $X$ (for the definition of a free action, see \cite[Definition 6.4]{acharEQUIVARIANTMIXEDHODGE}), then $[X/G]$ is a scheme and the result is trivial. Then exactly for any $n$-acyclic map $U\to X$ such that the action of $G$ on $U$ is free and $a\leqslant b$ integers such that $b-a<n$, the same proof as \cite[Lemma 8.1]{acharEQUIVARIANTMIXEDHODGE} implies that the functor 
  $$\Dd_{\mathrm{H}}^{[a,b]}([X/G])\to\Dd_{\mathrm{H}}^{[a,b]}([U/G])$$ is fully faithful. Thus, the functor 
  $$\mathrm{ho}\Dd^b_{\mathrm{H},c}([X/G])\to\Dd_G(X)$$ is fully faithful. It is not hard to check that the heart of $\Dd^b_{\mathrm{H},c}([X/G])$ is also the abelian category of $G$-equivariant mixed Hodge modules (because they satisfy descent), so that this finishes the proof.
\end{proof}
Similarly one can check that the 6 operations defined by Achar coincide with ours.\\

We now compare the pushforward we constructed with the one considered in \cite{davisonPurity2CalabiYauCategories2024}. Let $X$ be a smooth algebraic variety over $\C$, and let $G$ be an affine algebraic group acting on it. We are interested to the perverse cohomology groups of the object $p_!\Q$, where 
$$p\colon \Xfrak:=[X/G]\to \mcal$$ is a morphism of stacks, with $\mcal$ an algebraic variety. Here is how Davison and Meinhardt proceed: the construction is very similar to the construction of the compactly supported motive of a classifying space by Totaro in \cite{MR3548464}, and  our proof of the comparison is inspired from the proof of \cite[Proposition A.7]{MR4222601} by Hoskins and Pépin-Lehalleur and goes back to Borel. We will denote by $X/G$ the quotient stacks instead of $[X/G]$.

First, they choose an increasing family $V_1\subset V_2\subset\cdots \subset V_i\subset \cdots $ of representations of $G$, and a subsystem $U_1\subset U_2\subset\cdots \subset U_i\subset \cdots$ of representations on which $G$ acts freely (this can be done by choosing a closed embedding $G\subset \mathrm{GL}_r(\C)$ and then setting $V_i = \Hom_\C(\C^i,\C^r)$, and $U_i$ is the subset of surjective linear applications). Then $U_i/G$ is an algebraic variety, and in the case they deal with (they ask for the group $G$ to be \emph{special}), the quotient stack $(X\times U_i)/G$ is also a scheme by \cite[Proposition 23]{MR1614555} of Edidin and Graham. We denote by $\mathfrak{V}_i:=(V_i\times X)/G$, $\mathfrak{U}_i:=(U_i\times X)/G$. We have a commutative diagram: 
\[\begin{tikzcd}
	{\mathfrak{V}_i} & {\mathfrak{X}} & {\mathcal{M}} \\
	{\mathfrak{U}_i}
	\arrow["{b_i}", from=1-1, to=1-2]
	\arrow["{\iota_i}", from=2-1, to=1-1]
	\arrow["{a_i}"{description}, from=2-1, to=1-2]
	\arrow["p", from=1-2, to=1-3]
	\arrow["{p_i}"', from=2-1, to=1-3]
	\arrow["{q_i}"{description}, from=1-1, to=1-3, bend left=30]
\end{tikzcd}\]
Moreover the maps $a_i,b_i$ and $\iota_i$ are smooth. Davison and Meinhardt prove that for a given $n\in\Z$, the object 
$$\HHp^n((p_i)_!\Q_{\Ufrak_i}(-ir)[-2ir])$$ is independent of the choices for $i$ and $M$ large depending on $n$, and they denote by $\HHp^n(p_!\Q_{\Xfrak})$ the common value. 
We will show that indeed the canonical map $$\HHp^n((p_i)_!\Q_{\Ufrak_i}\{-ir\})\to\HHp^n(p_!\Q_{\Xfrak})$$ is an isomorphism in $\mathrm{MHM}(\mcal)$, where we have denoted by $(-)\{n\}:=(-)(n)[2n]$ for $n\in\Z$. In fact, we will show better, as the result (which is classical, and see \cite{khanEquivariantGeneralizedCohomology2024} for a vast generalisation, and \Cref{better310} for another use of this method) holds universally in motives:
\begin{prop}
  \label{calculpshriek}
  The natural map $$\colim_i(p_i)_!\Q_{\Ufrak_i}\{-ir\}\to p_!\Q_{\Xfrak}$$ is an equivalence in $\DM(\mcal,\Q)$, where $\DM(\mcal,\Q)$ is the $\infty$-category of étale motives with rational coefficients.
\end{prop}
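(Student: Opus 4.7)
Since $p_!$ preserves colimits in $\DM$, it suffices to prove that the induced map
\[
\colim_i (a_i)_!\Q_{\Ufrak_i}\{-ir\} \xrightarrow{\sim} \Q_\Xfrak
\]
is an equivalence in $\DM(\Xfrak,\Q)$. The map $b_i\colon \Vfrak_i\to\Xfrak$ is the pullback of the rank-$ir$ trivial vector bundle $V_i\to\Spec\C$ along $\Xfrak\to\Spec\C$, so the purity isomorphism of \Cref{opera}(7) together with homotopy invariance provides a canonical equivalence $(b_i)_!\Q_{\Vfrak_i}\simeq \Q_\Xfrak\{-ir\}$. Denoting by $c_i\colon \Zfrak_i:=((V_i\setminus U_i)\times X)/G\hookrightarrow \Vfrak_i$ the closed complement of $\iota_i$ and by $d_i:=b_i\circ c_i$, pushing the localization cofiber sequence $(\iota_i)_!\Q_{\Ufrak_i}\to \Q_{\Vfrak_i}\to (c_i)_*\Q_{\Zfrak_i}$ forward along $(b_i)_!$ and twisting by $\{-ir\}$ produces a cofiber sequence
\[
(a_i)_!\Q_{\Ufrak_i}\{-ir\} \to \Q_\Xfrak \to (d_i)_!\Q_{\Zfrak_i}\{-ir\}
\]
in $\DM(\Xfrak,\Q)$. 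The problem therefore reduces to the vanishing $\colim_i (d_i)_!\Q_{\Zfrak_i}\{-ir\}\simeq 0$.

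For the stated choice $V_i=\Hom_\C(\C^i,\C^r)$ and $U_i$ the open locus of surjective linear maps, the complement $Z_i:=V_i\setminus U_i$ has codimension $i-r+1$ in $V_i$. Stratifying $Z_i$ by the rank of the linear map, each stratum is smooth of dimension at most $\dim V_i-(i-r+1)$; successive applications of the Gysin triangle and of homotopy invariance along the corresponding strata of $\Zfrak_i$ show that $(d_i)_!\Q_{\Zfrak_i}\{-ir\}$ is built from Tate-twisted compact motives of the form $\Q_\Xfrak\{n\}$ whose twist $n$ tends to $-\infty$ uniformly as $i\to\infty$. As the transition maps in the colimit are compatible with this rank stratification, the colimit vanishes on every compact object of $\DM(\Xfrak,\Q)$, whence the proposition.

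The main obstacle is the quantitative control in this last step: one must carefully set up the rank stratification of $V_i\setminus U_i$ compatibly with the inclusions $Z_i\hookrightarrow Z_{i+1}$, track the Gysin contributions through the transition maps, and verify that the resulting range of Tate twists indeed tends to $-\infty$. A cleaner alternative is to invoke Khan's framework for equivariant generalized cohomology \cite{khanEquivariantGeneralizedCohomology2024}, in which precisely this Borel-construction colimit identification is established systematically for étale motives over quotient stacks, directly yielding the proposition; the argument there can be followed \emph{verbatim} in our setting. A more hands-on approach in the spirit of \cite[Proposition A.7]{MR4222601} would induct on the codimension stratification and use the compactness of each $(p_i)_!\Q_{\Ufrak_i}$ over $\mcal$ to reduce to computing maps into finitely many Tate twists at a time.
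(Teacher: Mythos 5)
Your opening reductions are fine and parallel the paper's: $p_!$ preserves colimits, purity plus homotopy invariance identifies $(b_i)_!\Q_{\mathfrak{V}_i}$ with a Tate twist of $\Q_\Xfrak$, and localization reduces everything to showing that the terms supported on the complements $\mathfrak{Z}_i$ die in the colimit. (One bookkeeping remark: having asserted $(b_i)_!\Q_{\mathfrak{V}_i}\simeq\Q_\Xfrak\{-ir\}$, twisting the localized sequence by $\{-ir\}$ makes the middle term $\Q_\Xfrak\{-2ir\}$, not $\Q_\Xfrak$; the twist that trivializes the middle term is $\{+ir\}$, so fix your signs consistently with the purity isomorphism you invoke.) The genuine gap is in the only step that carries content, the vanishing of $\colim_i(d_i)_!\Q_{\mathfrak{Z}_i}\{\cdot\}$. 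You argue directly on the stack and conclude because ``the colimit vanishes on every compact object of $\DM(\Xfrak,\Q)$'': this presupposes both that $\DM(\Xfrak,\Q)$ is compactly generated for the quotient stack and that maps from a fixed compact object into the Tate twists and shifts produced by your rank stratification eventually vanish. Neither is justified here; the needed vanishing of motivic cohomology in the relevant range of twists and degrees is something you only control over schemes (indeed over fields), and the compatibility of the rank stratification with the transition maps of the colimit is precisely the ``quantitative control'' you yourself concede is the main obstacle and then outsource to \cite{khanEquivariantGeneralizedCohomology2024} or \cite{MR4222601} without performing the reduction that would make those results applicable.

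That missing reduction is exactly how the paper closes the argument, and it is short: for a smooth presentation $\pi\colon X\to\Xfrak$ the functor $\pi^*$ is conservative, and base change of $(-)_!$ against arbitrary pullbacks (\Cref{opera}) transports the comparison map to the pulled-back diagram of schemes over $X$; then the conservative family of point pullbacks $(x^*)_{x\in X}$, again with base change, reduces to the case of a field, where smoothness of the two projections identifies the map with $\colim_i M(U_i)\to\colim_i M(V_i)$ and \cite[Proposition 2.13]{MR4222601} applies because $\mathrm{codim}_{V_i}(V_i\setminus U_i)\to\infty$. With that descent step inserted, your localization variant also closes, since the vanishing of the complement terms can then be checked over a field by the same citation (or by an honest version of your stratification count); as written, however, the proof of the crucial vanishing is not there.
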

\begin{proof}
  First note that the counit map 
  $$(b_i)_!(b_i)^!\Q_{\Xfrak}\to\Q_\Xfrak$$ is an equivalence by $\A^1$-invariance, because $b_i$ is a vector bundle on $\Xfrak$. As $b_i$ is smooth of relative dimension $ir$, purity gives that the canonical map 
  $$(q_i)_!\Q_{\mathfrak{V}_i}\{-ir\}\to p_!\Q_{\Xfrak}$$ is an equivalence for each $i\in\N$. Thus it suffices to prove that the map 
  $$\colim_i (a_i)_!\Q_{\mathfrak{U}_i}\{-ir\} \to\colim_i (b_i)_!\Q_{\mathfrak{V}_i}\{-ir\}$$ on $\Xfrak$ induced by the counits $(\iota_i)_!\iota_i^*\to\mathrm{Id}$ is an equivalence, because applying $p_!$ would produce the sought isomorphism. 
  
  Denote by $\pi:X\to\Xfrak$ the projection. Recall that $\pi^*$ is conservative, proper base change ensures that it suffices to deal with the analogous situation over $X$
  \[\begin{tikzcd}
    {{V}_i} & {{X}} \\
    {{U}_i}
    \arrow["{\beta_i}", from=1-1, to=1-2]
    \arrow["{j_i}", from=2-1, to=1-1]
    \arrow["{\alpha_i}"{description}, from=2-1, to=1-2]
  \end{tikzcd}\]
  where every thing has been pulled back to $X$, and the map is now 
  $$\colim_i (\alpha_i)_!\Q_{{U}_i}\{-ir\} \to\colim_i (\beta_i)_!\Q_{{V}_i}\{-ir\}.$$ Everything is now a scheme. The family of pullbacks $(x^*)_{x\in X}$ is conservative on $\DM(X,\Q)$ by localisation, thus by proper base change we reduce further to the case where $X = \Spec k$ is the spectrum of a field, and each $V_i$ is a smooth $k$-scheme (in fact, a vector space). Furthermore, the codimension of $W_i:= V_i\setminus U_i$ is $V_i$ goes to $\infty$ when $i\to\infty$, and the smoothness of $\alpha_i$ and $\beta_i$ ensure that 
  $$(\alpha_i)_!\alpha_i^*\Q\{-ir\} \simeq (\alpha_i)_\sharp\alpha_i^*\Q\simeq M(U_i)\in\DM(k,\Q),$$ and the same for $(\beta_i)_!\Q\{-ir\}=M(V_i)$, where for $Y$ a smooth $k$-scheme, the object $M(Y)$ is the motive of $Y$.
  We are looking at 
  $$\colim_i M(U_i)\to\colim_i M(V_i)$$ in $\DM(k,\Q)$, with $\mathrm{codim}_{V_i}(V_i\setminus U_i)\underset{i\to\infty}{\longrightarrow}\infty$. By \cite[Proposition 2.13]{MR4222601} this is an equivalence.
\end{proof}

\begin{cor}
In $\Dd_{\mathrm{H}}(\mcal)=\Ind\Dd^b(\mathrm{MHM}(\mcal))$ the natural map 
$$\colim_i(p_i)_!\Q_{\Ufrak_i}\{-ir\}\to p_!\Q_{\Xfrak}$$ is an equivalence. In particular, for each $n\in\N$, the natural maps 
$$\HHp^n((p_i)_!\Q_{\Ufrak_i})\{-ir\} \to \HH^p(p_!\Q_{\Xfrak})$$
are equivalences for $i$ big enough.
\end{cor}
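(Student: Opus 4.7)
The plan is to transport \Cref{calculpshriek} from \'etale motives to mixed Hodge modules via the Hodge realization, and then extract the claim on perverse cohomology using compatibility of the t-structure with filtered colimits.

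First, I would apply the Hodge realization functor $\rho_H\colon\DM\to\Dd_H$ to the equivalence
$$\colim_i (p_i)_!\Q_{\Ufrak_i}\{-ir\}\isolong p_!\Q_\Xfrak$$
established in $\DM(\mcal,\Q)$ by \Cref{calculpshriek}. Since $\rho_H$ is colimit-preserving and commutes with the six operations on stacks (in particular with $p_!$, Tate twists and shifts), its image is again an equivalence, which yields the first assertion in $\Dd_H(\mcal)=\Ind\Dd^b(\mathrm{MHM}(\mcal))$.

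For the ``in particular'' clause, the key input is that the perverse t-structure on $\Dd_H(\mcal)$ is compatible with filtered colimits, as established in \Cref{tstructureperv}. Thus $\HHp^n$ commutes with the filtered colimit just obtained, and
$$\HHp^n(p_!\Q_\Xfrak)\simeq\colim_i \HHp^n\bigl((p_i)_!\Q_{\Ufrak_i}\{-ir\}\bigr).$$
It remains to show that for fixed $n$, the transition maps in this filtered system become isomorphisms as soon as $i$ is large enough in terms of $n$.

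This last stabilization is the only step that is not purely formal, and I expect it to be the main obstacle, although everything one needs is already present in the argument used for \Cref{calculpshriek}. The strategy is to analyze the cofiber $C_i$ of $(p_i)_!\Q_{\Ufrak_i}\{-ir\}\to (p_{i+1})_!\Q_{\Ufrak_{i+1}}\{-(i+1)r\}$ by checking it after pulling back along a smooth presentation $\pi\colon X\to\Xfrak$ and then through the proper base change against points $x\in X$, reducing (exactly as in the proof of \Cref{calculpshriek}) to a statement about motives of smooth $k$-varieties: the cofiber is built out of $M(V_{i+1}\setminus U_i)$ and similar pieces. Since the codimension of $V_i\setminus U_i$ in $V_i$ tends to infinity, purity for the smooth maps at play forces $C_i$ to lie in perverse degrees going to $+\infty$ with $i$. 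Hence $C_i$ does not contribute to $\HHp^n$ once $i$ is large enough, which gives the claimed stabilization and finishes the proof.
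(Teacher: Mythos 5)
The first assertion is handled exactly as in the paper: apply the (stack-extended) Hodge realisation, which preserves colimits and the operations, to the motivic equivalence of \Cref{calculpshriek}; and the identification $\HHp^n(p_!\Q_\Xfrak)\simeq\colim_i\HHp^n\bigl((p_i)_!\Q_{\Ufrak_i}\{-ir\}\bigr)$ via compatibility of the t-structure with filtered colimits is also the paper's step. Where you diverge is the final stabilisation. The paper does not re-prove it geometrically: it uses that each $\HHp^n(p_!\Q_\Xfrak)$ is a \emph{constructible}, hence Noetherian (finite length), object of $\Ind\mathrm{MHM}(\mcal)$, on top of the Davison--Meinhardt independence statement quoted just before the corollary, so that the eventually constant system of perverse cohomologies is identified with its colimit. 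Your plan is instead to re-derive the stabilisation from a perverse-amplitude estimate on the cofibers $C_i$; this is in principle viable (it is the codimension estimate underlying \cite[Proposition 2.13]{MR4222601}, which the paper itself invokes motivically), but it is also exactly the ``not purely formal'' step you defer, and as sketched it has a genuine gap.

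Concretely: $C_i$ is an object of $\Dd_H(\mcal)$, so ``checking it after pulling back along a smooth presentation $\pi\colon X\to\Xfrak$ and then at points $x\in X$'' does not parse for the object whose perverse degrees you need to control; the pointwise reductions in \Cref{calculpshriek} are conservativity arguments for detecting \emph{equivalences}, not perverse amplitude on $\mcal$. If you want this route, you must bound the \emph{ordinary} amplitude of $C_i$ stalkwise on $\mcal$ (base change to points of $\mcal$, then the fibre computation with $U_i\subset V_i$ and $\mathrm{codim}(V_i\setminus U_i)\to\infty$) and then convert to a perverse bound using that $\mcal$ has fixed dimension, while also beating the cohomological amplitude of $(p_i)_!$, which \emph{grows} with $i$ (the fibre dimensions grow); it is the comparison ``$2\cdot\mathrm{codim}$ versus $\dim$'' that makes the estimate work, and none of this bookkeeping appears in your sketch. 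Finally, the claim that $C_i$ sits in perverse degrees tending to $+\infty$ is convention-dependent: with the normalisation intended here (Borel approximations of the compactly supported pushforward), the error terms escape towards $-\infty$; what matters, and what you should state, is that for fixed $n$ they leave any bounded range of perverse degrees as $i\to\infty$. Alternatively, you can avoid the estimate altogether, as the paper does, by quoting the stabilisation of $\HHp^n((p_i)_!\Q_{\Ufrak_i}\{-ir\})$ and using that the colimit $\HHp^n(p_!\Q_\Xfrak)$ is constructible, hence of finite length.
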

\begin{proof}
  The Hodge realisation $$\rho_{\mathrm{H}}\colon\DM\to\Dd_{\mathrm{H}}$$ extends naturally to stacks, in a way that commutes with the operations and colimits, thus the first statement is just the Hodge realisation applied to \Cref{calculpshriek}. The second statement follows from the fact that the t-structure on $\Dd_{\mathrm{H}}(\mcal)$ is compatible with filtered colimits, and that each $\HHp^n(p_!\Q_\Xfrak)$ is constructible thus Noetherian.
\end{proof}

\subsubsection*{Acknowledgements} 
I am grateful to Pierre Descombes and Tasuki Kinjo for encouraging me to write this article and for giving me pertinent feedback. Tasuki Kinjo very kindly pointed out references I had missed. I am indebted to the anonymous referees for many comments and for suggesting a simplification of the proof of \Cref{better310}. I thank Joseph Ayoub, Mark de Cataldo, Pavel Safranov and Jakob Scholbach for comments on earlier versions of this article. I would also like to thank Frédéric Déglise, Sophie Morel, Raphaël Ruimy and Luca Terenzi for useful discussions.

This article was written during my PhD whose fundings are given by the ÉNS de Lyon. I was also funded by the ANR HQDIAG.

Swann Tubach \url{swann.tubach@ens-lyon.fr}

E.N.S Lyon, UMPA, 

46 Allée d'Italie, 69364 Lyon Cedex 07, France
\end{document}